\ifdefined\XeTeXversion
\else
\pdfoutput=1
\fi
\documentclass[11pt,a4paper]{article}
\usepackage[T1]{fontenc}
\usepackage{lmodern}
\usepackage{amsmath,amssymb,amsthm,mathtools}
\usepackage[textwidth=440pt,top=27mm,bottom=27mm]{geometry}
\usepackage{microtype}
\usepackage{booktabs,array}
\usepackage{xcolor}
\usepackage{needspace}
\usepackage[colorlinks=true,linkcolor=blue!45!black,citecolor=blue!45!black,urlcolor=blue!45!black]{hyperref}
\hypersetup{pdftitle={Polynomial families of incident flags and explicit off-diagonal Ramsey graphs},pdfauthor={Brecht Verbeken}}
\newcommand{\F}{\mathbb F}
\newcommand{\A}{\mathbb A}
\newcommand{\PP}{\mathbb P}
\newcommand{\Ccal}{\mathcal C}
\newcommand{\Ical}{\mathcal I}
\newcommand{\Wcal}{\mathcal W}
\DeclareMathOperator{\cdeg}{cdeg}
\DeclareMathOperator{\rank}{rank}
\DeclareMathOperator{\Tr}{Tr}
\DeclareMathOperator{\spanop}{span}
\newtheorem{theorem}{Theorem}[section]
\newtheorem{proposition}[theorem]{Proposition}
\newtheorem{lemma}[theorem]{Lemma}
\newtheorem{corollary}[theorem]{Corollary}
\theoremstyle{definition}

\theoremstyle{remark}

\numberwithin{equation}{section}
\begin{document}
\title{Polynomial families of incident flags\\and explicit off-diagonal Ramsey graphs}
\author{Brecht Verbeken\textsuperscript{1,2}\\[0.6em]
\parbox{0.96\textwidth}{\centering\small
\textsuperscript{1}Department of Business Technology and Operations,\\
Data Analytics Laboratory, Vrije Universiteit Brussel (VUB),\\
Pleinlaan 2, 1050 Brussels, Belgium.\\[0.35em]
\textsuperscript{2}imec-SMIT, Vrije Universiteit Brussel,\\
Pleinlaan 9, 1050 Brussels, Belgium.\\[0.45em]
\href{mailto:brecht.verbeken@vub.be}{brecht.verbeken@vub.be}}}
\date{}
\maketitle

\begin{abstract}
We construct explicit off-diagonal Ramsey graphs from polynomial families of incident point--hyperplane flags. Universal interpolation translates the exact dimension of ordered clique configurations into a bound on their coefficient--label incidence. At the critical dimension, a polynomial separates the forbidden-pair image from its diagonal; deleting the corresponding edges preserves a triangular rank certificate. Restriction of scalars realizes the rational endpoint without rounding loss. The resulting fixed-$s$ exponent has leading scale $s/(2\log_2s)$, and short Frobenius relations give the explicit example $R(16,t)\ge\Omega(t^{2.0539221767\ldots})$. A finite-fiber construction gives the range $s^2(\log s)^2=o(\log t)$. For fixed parameters, algebraic preprocessing terminates and field initialization, vertex decoding and adjacency take deterministic time polynomial in the extension degree. We also retain an elimination-free variant and formulate the filtering argument for general forbidden configurations with a distinguished edge.
\end{abstract}

\noindent\textit{Keywords.} Ramsey graphs; finite fields; incident flags; polynomial interpolation; algebraic configurations.\par
\noindent\textit{2020 Mathematics Subject Classification.} Primary 05D10; Secondary 05B25, 14G15, 68R10.

\section{Introduction}\label{sec:intro}
The Ramsey number $R(s,t)$ is the least $N$ such that every graph on $N$ vertices contains a clique of size $s$ or an independent set of size $t$. A graph $G$ with $N$ vertices, clique number $\omega(G)<s$ and independence number $\alpha(G)<t$ proves $R(s,t)>N$. We study explicit witnesses in the fixed off-diagonal regime: $s$ is fixed and $t\to\infty$. Unless another base is displayed, logarithms are to base two.

Ihringer and Mattheus \cite{IM} construct ordered trace-ratio graphs over $\F_{2^h}$ with independence number at most $d^h+1$ and clique number at most $2^{d-2}$. Their incidence-rank argument extends to any family of incident point--hyperplane pairs. We choose those flags by polynomial interpolation, so that the rank bound holds for every coefficient assignment while the coefficients remain free to prevent cliques. The two tasks impose different constraints: independence is controlled by the point features, and clique avoidance is controlled by the dimension of ordered configurations.

\subsection{The Ramsey bounds}
For $3\le d\le s$, define
\[
\kappa_{s,d}=s(d-1)-\binom d2,\qquad
\mu_{s,d}=\frac{\kappa_{s,d}}{s-1},\qquad
a=\lceil\mu_{s,d}\rceil,\quad r=d-1-a,\quad k=\lceil\log_2s\rceil.
\]
If $r=0$, put $\rho=d$. Otherwise let $\rho$ be the root in $(d-1,d)$ of $x^k-dx^{k-1}+r=0$. Set
\[
E_{\rm crit}(s)=\max_{3\le d\le s}\frac{\mu_{s,d}}{\log_2d},\qquad
E_{\rm F}(s)=\max_{3\le d\le s}\frac{\mu_{s,d}}{\log_2\rho}.
\]
The first exponent uses the ordinary rank certificate; the second uses short Frobenius relations between point coordinates.

\begin{theorem}\label{thm:main}
For every fixed integer $s\ge3$, deterministic witnessing graphs give
\[
R(s,t)\ge\Omega_s\bigl(t^{E_{\rm F}(s)}\bigr),\qquad
E_{\rm F}(s)\ge E_{\rm crit}(s)\ge\frac{s}{2\log_2s}.
\]
Both optimized exponents are $(1/2+o(1))s/\log_2s$ as $s\to\infty$. For fixed parameters, algebraic preprocessing terminates and field initialization, vertex decoding and adjacency have bit cost polynomial in the extension degree.
\end{theorem}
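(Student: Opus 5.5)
The plan is to assemble Theorem~\ref{thm:main} from three ingredients: a graph construction with an independence bound, a clique bound, and a vertex count. The exponent and complexity claims are then bookkeeping.

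\emph{Step 1 (construction and independence).} Fix $3\le d\le s$ attaining the maximum in $E_{\rm F}(s)$ (or in $E_{\rm crit}(s)$). Work over $\F_q$ with $q=2^h$. Take as vertices a polynomial family of incident point--hyperplane flags in dimension $d$, indexed by coefficient assignments and labels. Adjacency is given by the ordered trace-ratio rule of \cite{IM}. Because the flags are produced by universal interpolation, the incidence-rank argument of \cite{IM} applies for every coefficient assignment. It gives a triangular rank certificate and hence $\alpha\le d^h+O(1)$. In the Frobenius version, the short relations among point coordinates cut the relevant feature space down to the number of words satisfying a linear recurrence. That recurrence has characteristic polynomial $x^k-dx^{k-1}+r$ with $k=\lceil\log_2 s\rceil$, so its growth rate is $\rho$ and $\alpha\le O_s(\rho^h)$. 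When $r=0$ this degenerates to $\rho=d$.

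\emph{Step 2 (cliques and filtering).} An $s$-clique gives an ordered configuration of $s$ incident flags. Its parameter variety has dimension $\kappa_{s,d}=s(d-1)-\binom d2$: each flag contributes $d-1$ degrees of freedom, and $\binom d2$ is lost to the ambient symmetry and normalization. Interpolation turns this dimension into a bound on the coefficient--label incidence. Choosing the coefficient space of dimension $\approx\mu_{s,d}$ per label, which is the critical dimension $\kappa_{s,d}/(s-1)$, should force every coefficient assignment outside a lower-dimensional bad set to carry no ordered clique.

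At the exact critical dimension the count is borderline. There I will use a polynomial that separates the image of the forbidden pairs from its diagonal, and delete the edges it flags. Deleting edges can only shrink cliques and enlarge independent sets. So the key check is that the deleted edges lie outside the triangular pattern the rank certificate relies on, which leaves $\alpha$ unchanged. The rational value $\mu_{s,d}$ is reached without rounding by restriction of scalars: take $h$ divisible by $s-1$ and view $\F_{2^h}$ as a vector space over a subfield, so that the vertex count is $N=2^{h\mu_{s,d}+O(1)}$. I expect this critical-dimension filtering, and proving that it preserves both $\omega<s$ and the rank certificate, to be the main obstacle.

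\emph{Step 3 (exponents and complexity).} With $t\approx\rho^h$ and $N\approx 2^{h\mu}$ we get $N=t^{\mu/\log_2\rho}$. For $t$ between consecutive admissible values, monotonicity of $R(s,\cdot)$ costs only a constant factor, which gives $\Omega_s$. Since $\rho\le d$, we have $E_{\rm F}\ge E_{\rm crit}$. Taking $d=s$ gives
\[
\mu_{s,s}=\frac{s(s-1)-s(s-1)/2}{s-1}=\frac s2,
\]
hence $E_{\rm crit}\ge s/(2\log_2 s)$.

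For the matching upper asymptotic, write $\mu_{s,d}=(d-1)(s-d/2)/(s-1)\le\min(d-1,s/2+O(1))$ and use $\log_2\rho\ge\log_2(d-1)$. For $d\le s/\log^2 s$ the ratio is $o(s/\log s)$. For larger $d$ we have $\log_2(d-1)=(1-o(1))\log_2 s$, so the ratio is at most $(1/2+o(1))s/\log_2 s$.

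Finally, the runtime claims follow from the explicitness of the steps. The preprocessing (elimination and the separating polynomial) is a finite Gröbner-type computation depending only on $s$ and $d$, so it terminates. Constructing $\F_{2^h}$ (a deterministic irreducible polynomial in characteristic $2$), decoding a vertex into its flag, and evaluating a trace ratio together with the filter polynomial are all polynomial in $h$.
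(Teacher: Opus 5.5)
The decisive gap is in Step~2, where you claim that edge filtering leaves $\alpha$ unchanged because the deleted edges can be kept outside the triangular pattern. That cannot be arranged. A deleted edge $z\prec w$ has $H(w)P(z)=0$, hence $B_{z,w}=1$. After deletion the pair may lie in an independent set of the new graph, and there the principal submatrix of $B$ is no longer triangular, so the old certificate bounds nothing. The paper's remedy (Lemma~\ref{lem:filter-rank}) is the Hadamard product $B\circ\bigl(F(a,z,w)\bigr)_{z,w}$. On a new independent set every above-diagonal entry vanishes, either from an old nonedge or a deleted edge. Writing $F=\sum_{i\le R}f_i(Z)g_i(W)$ gives rank at most $R\cdot\rank B$. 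This matrix needs a nonzero diagonal, so you must also delete the vertices with $F(a,z,z)=0$. That requirement explains three features of the construction: the separating polynomial must satisfy $F(A,Z,Z)\not\equiv0$, the coefficients $a$ are specialized by Lemma~\ref{lem:adaptive} to keep $F(a,Z,Z)\ne0$, and a decoder for the $q^{m-1}(q-\delta)$ surviving labels is required. The independence bound grows by a fixed factor; it is not preserved.

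The geometric input is also misstated, and the separation step is only asserted.
\begin{itemize}
\item $\kappa_{s,d}$ is the codimension of ordered clique configurations in $(\A^{2d-3})^s$, not their dimension. Their dimension is $s(d-2)+\binom d2$, obtained by extending the prefix spans $\spanop(P_1,\dots,P_j)\subseteq H_j$ to a complete flag. The quantity equal to $\mu_{s,d}$ is the label dimension $m/b$, not a coefficient dimension.
\item At $m(s-1)=b\kappa_{s,d}$, universal interpolation gives only $\dim\Ical\le L+m$. So assignments off a lower-dimensional bad set need not be clique-free; even the projection to coefficients plus one label may be dominant.
\item The filter exists for a reason you do not give. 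The closure $Y$ of the image in $(A,z_1,z_2)$ has dimension at most $L+m$, while the diagonal is irreducible of dimension $L+m$. If $Y$ contained the diagonal, the diagonal would be a component of $Y$, and a nonempty open part of $Y$ would miss the image (whose labels are distinct), contradicting density. Frobenius descent then gives $F$ over $\F_2$.
\item Restriction of scalars multiplies the configuration codimension by $b$ (Lemma~\ref{lem:scalars}). The paper takes labels over $\F_{2^h}$ and flags over $\F_{2^h}[U]/(g_b)$ with one fixed irreducible $g_b\in\F_2[U]$. This algebra is the field $\F_{2^{bh}}$ exactly when $\gcd(b,h)=1$. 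Your condition $(s-1)\mid h$ does not provide this, and it is what lets one prime-field filter serve every $h$.
\item $E_{\rm F}$ needs a family whose point coordinates actually satisfy the short relations while interpolation still controls cliques. Proposition~\ref{prop:two-stage} does this as follows. It copies the labels into $X\in E_q^{a}$ with $a=\lceil\mu_{s,d}\rceil$ and makes $Y$ and the slopes compressed polynomials in $X$. Features of degree $j<s\le2^k$ are sums of length-$k$ Frobenius words, which is where $k=\lceil\log_2s\rceil$ comes from. The incidence is then restricted to $X$-tuples that are distinct in every split factor. Your proposal supplies none of this.
\end{itemize}
Your Step~3 bookkeeping is fine: extraction along admissible $h$, $E_{\rm F}\ge E_{\rm crit}$, $\mu_{s,s}=s/2$, and the asymptotic upper bound.
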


The construction and extraction are proved in Theorem~\ref{thm:critical}, Proposition~\ref{prop:two-stage} and Corollary~\ref{cor:frobenius-exponent}; Proposition~\ref{prop:asymptotic} gives the asymptotics. The notation $\Omega_s$ includes a positive prefactor and an availability threshold depending on $s$. These constants are independent of $t$.

For $s=16$, take $d=13$ and write $\mu=38/5$. With $\rho_{16}$ the root in $(12,13)$ of $x^4-13x^3+4=0$, the bound is
\[
R(16,t)\ge\Omega\bigl(t^{38/(5\log_2\rho_{16})}\bigr)
=\Omega\bigl(t^{2.0539221767\ldots}\bigr).
\]
The unrefined critical exponent is already $2.0538099736\ldots$. Thus the endpoint and removal of the integral rounding loss account for the main gain in this example; the short relation improves the denominator further.

\begin{table}[t]
\centering\small
\setlength{\tabcolsep}{5pt}
\begin{tabular}{rccccc}
\toprule
$s$ & $E_{\rm dir}$ & $E_{\rm mark}$ & $E_{\rm crit}$ & $E_{\rm F}$ & Prior benchmark\\
\midrule
4  & 0.630930 & 0.630930 & 1.051550 & 1.052649 & $5/3$\\
6  & 0.861353 & 1.000000 & 1.205894 & 1.212043 & $2$\\
12 & 1.505150 & 1.666667 & 1.724081 & 1.724306 & $2$\\
15 & 1.734389 & 1.952601 & 1.972525 & 1.972640 & $2$\\
16 & 1.891667 & 1.952601 & 2.053810 & 2.053922 & $2$\\
17 & 1.952601 & 2.101196 & 2.134027 & 2.134033 & $2$\\
18 & 2.047664 & 2.161905 & 2.213284 & 2.213289 & $2$\\
33 & 3.191191 & 3.328234 & 3.334734 & 3.334734 & 2.137243\\
\bottomrule
\end{tabular}
\caption{Optimized sufficient exponents, rounded to six decimals. The original strict constructions give $E_{\rm dir}$ and $E_{\rm mark}$ in \eqref{eq:old-optimized}; the endpoint constructions give $E_{\rm crit}$ and $E_{\rm F}$. The final column takes the larger of the Ihringer--Mattheus exponent and the finite-geometric benchmarks discussed below. It is a comparison of these cited constructions, not an exhaustive record claim.}\label{tab:comparison}
\end{table}

\subsection{Previous constructions and the computational scope}
Alon and Pudl\'ak \cite{AP} give explicit exponents of order $\sqrt{\log s/\log\log s}$. The uniform Ihringer--Mattheus bound \cite[Theorem 1.3]{IM} has exponent $(1-o(1))\lceil\log_2s\rceil/\log_2(\lceil\log_2s\rceil+1)$ for $3\le s\le t$. The fixed-$s$ scale here is larger as $s\to\infty$, but our assertion of polynomial local access fixes the algebraic parameters first.

Small clique sizes require a separate comparison. Kostochka, Pudl\'ak and R\"odl \cite{KPR} give exponents $8/5$ for $s=4$, $5/3$ for $s=5$, and $2$ for $s=6$; the last applies to every $s\ge6$ by monotonicity. Han, Ihringer and Van Maldeghem \cite{HIV} improve the explicit $s=4$ exponent to $5/3$ using pseudorandom graphs in the split Cayley hexagon. The flag-family bounds are weaker than these benchmarks for small $s$. Within the parameter choices analyzed here, the refined endpoint first exceeds $2$ at $s=16$, the original marked bound at $s=17$, and the original direct bound at $s=18$. Table~\ref{tab:comparison} also includes the Ihringer--Mattheus benchmark at $s=33$.

These comparisons concern explicit witnessing constructions. For unrestricted witnesses, Brada\v{c} \cite{Bradac} proves $R(s,t)\ge\Omega_s(t^{s-1}/(\log t)^{2s-4})$ for fixed $s\ge3$. The smaller exponent here is accompanied by deterministic local access.

We represent $\F_{2^h}$ by binary polynomials modulo an irreducible polynomial of degree $h$. Shoup's algorithm \cite{Shoup} constructs one deterministically in time polynomial in $h$ in fixed characteristic, and every field operation has polynomial bit cost. Preprocessing computes fixed prime-field polynomials before $h$ varies. Its dependence on $s,d$ is not claimed to be efficient. Coefficient selection, decoding an indexed vertex, and testing adjacency then have cost polynomial in $h$. The resulting graphs have exponentially many vertices and are accessed locally.

There is also a quantified existence range. Corollary~\ref{cor:growing-range} gives $R(s,t)\ge t^{(1/2-o(1))s/\log_2s}$ when $s\to\infty$ and $s^2(\log s)^2=o(\log t)$. A sufficient threshold has size $2^{O(s^2(\log s)^2)}$. This bounds availability, not the preprocessing cost as $s$ varies. The original elimination-free construction, with its larger threshold and strict dimension inequality, remains in Appendix~\ref{sec:original-statements}.

\subsection{The configuration method}
In a chart of dimension $\nu$, suppose an ordered forbidden family of $v$ entries has codimension $\kappa$. Copy $m$ coordinates as labels and interpolate the remaining coordinates independently. At every distinct label tuple, universal evaluation rank gives the coefficient--configuration incidence bound $L+mv-\kappa$, where $L$ is the number of coefficients. A strict inequality $m(v-1)<\kappa$ makes the projection remembering one label proper, so deletion of a polynomial zero set suffices. At equality it may be dominant. Remembering two labels instead gives an image of dimension at most $L+m$ that cannot contain its $L+m$-dimensional diagonal: the actual tuples have distinct labels. A polynomial separates the image from that diagonal. Deleting an edge where this polynomial vanishes excludes every forbidden tuple; its nonzero diagonal lets us multiply the old triangular rank certificate by the filter.

Restriction of scalars multiplies the configuration codimension by $b$, allowing $m/b=\kappa/(v-1)$ exactly. The following principle isolates the inputs used by the method; its proof is in Section~\ref{sec:general}.

\begin{theorem}[Distinguished-edge configuration principle]\label{thm:configuration}
Let a prime-field polynomial chart have dimension $\nu$ and an ordered graph with an assignment-uniform triangular rank certificate. Let $\Ccal_i\subseteq(\A^\nu)^{v_i}$, with $v_i\ge2$, for finitely many $i$, be constructible forbidden configuration families of codimension at least $\kappa_i$. Require distinct copied labels and choose a distinguished edge of each family. After restriction of scalars of degree $b$, use $m$ copied base coordinates and independent interpolating outputs. If
\[
m\le b\nu,\qquad m(v_i-1)\le b\kappa_i\quad\text{for every }i,
\]
then finite deterministic preprocessing, specialization and polynomial edge filtering produce a positive-density indexed vertex set avoiding all these patterns. The independence bound changes by a fixed multiplier. For fixed model parameters, decoding and adjacency have cost polynomial in the base-field extension degree.
\end{theorem}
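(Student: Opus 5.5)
The plan is to follow the configuration method sketched just before the statement, in four steps.

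\textbf{Step 1 (restriction of scalars).} Replace the chart $\A^\nu$ over $\F_{q^b}$ by its Weil restriction $\A^{b\nu}$ over $\F_q$. Each constructible family $\Ccal_i$ becomes a constructible family $\Ccal_i'\subseteq(\A^{b\nu})^{v_i}$. Its codimension is at least $b\kappa_i$, because Weil restriction multiplies dimensions by $b$ and preserves constructibility. From now on I work only over the prime field with these restricted families.

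\textbf{Step 2 (universal interpolation).} Choose $m\le b\nu$ base coordinates as copied labels. Interpolate the remaining $b\nu-m$ coordinates by polynomials in the labels, with $L$ free coefficients. Fix a tuple of $v_i$ \emph{distinct} label points of degree bounded in terms of the interpolation space. The evaluation map from coefficients to output coordinates is then surjective and linear; this is the universal evaluation rank. Hence the fibre of the incidence variety
\[
\Ical_i=\{(c,x_1,\dots,x_{v_i})\}
\]
over each label tuple has dimension at most $L-\bigl(v_i(b\nu-m)-(v_ib\nu-b\kappa_i-v_im)\bigr)$ after accounting for codimension. Summing over the $mv_i$-dimensional label space gives
\[
\dim\Ical_i\le L+mv_i-b\kappa_i.
\]

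\textbf{Step 3 (two-label projection and filter).} Let $(j,j')$ be the distinguished edge of $\Ccal_i$. Project $\Ical_i$ to the coefficient vector $c$ together with the two labels $(\ell_j,\ell_{j'})$. The projection forgets $m(v_i-2)$ label coordinates, so its image has dimension at most
\[
L+mv_i-b\kappa_i-m(v_i-2)\le L+m,
\]
using the hypothesis $m(v_i-1)\le b\kappa_i$. The diagonal $\{\ell_j=\ell_{j'}\}$ is irreducible of dimension $L+m$. Distinct labels mean the image is contained in the closure of points off the diagonal, so its closure cannot contain the diagonal. If the image had full dimension $L+m$ on a component meeting the diagonal, that component would still be a separate variety of the same dimension; I will verify this from the distinct-label condition, which places it in the complement of the diagonal.

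So there is a prime-field polynomial $F_i(c,\ell,\ell')$ that vanishes on the image closure but not identically on the diagonal. It can be computed by finite elimination, Gröbner bases over $\F_q$, in the preprocessing. Take $F=\prod_iF_i$. Specialize $c$ to a value $c_0$ with $F(c_0,\ell,\ell)\not\equiv0$. Such $c_0$ exists over a sufficiently large extension, found by deterministic search over a bounded-degree subfield, which is fixed preprocessing.

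Delete every edge $\{u,w\}$ with $F(c_0,\ell_u,\ell_w)=0$. Any forbidden tuple would have its distinguished edge killed, so no pattern survives.

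\textbf{Step 4 (rank certificate and density).} This is the step I expect to be the main obstacle. Take the assignment-uniform triangular certificate $M$, with nonzero diagonal, whose off-diagonal support lies on the edges. Replace it by the entrywise product $M_{uw}F(c_0,\ell_u,\ell_w)$. The result is still supported on surviving edges. It stays triangular with nonzero diagonal on the vertices where $F(c_0,\ell,\ell)\ne0$. Its rank multiplies by at most the number of monomials of $F$ in $(\ell,\ell')$, a constant, which gives the fixed multiplier in the independence bound.

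The difficulty is that restricting to vertices with $F(c_0,\ell,\ell)\neq0$ must retain positive density. By Schwartz--Zippel this set omits at most a $\deg F/q^{b\,\cdot}$ fraction of label values, which is a constant fraction once the extension degree exceeds a threshold. I also need to check that the assignment-uniformity and the triangular order survive the restriction.

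Decoding and adjacency then amount to evaluating fixed polynomials plus the original adjacency test, so they cost polynomial in the extension degree.
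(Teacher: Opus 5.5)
Your outline follows the paper's own route: scalar restriction, universal interpolation, projection to the distinguished pair, a filter nonzero on the diagonal, $F=\prod_iF_i$, and the entrywise product of Lemma~\ref{lem:filter-rank}. However, Step~3 is wrong as written, and it is the only place where the critical hypothesis enters. Forgetting $m(v_i-2)$ label coordinates does not lower dimension by $m(v_i-2)$. For the closure $Y$ of the pair image one only has $\dim\Ical_i-m(v_i-2)\le\dim Y\le\dim\Ical_i$, because the fibres over $(A,z_j,z_{j'})$ can be finite. Your bound $\dim Y\le L+2m-b\kappa_i$ is in fact false in general. It would let you weaken the hypothesis to $m\le b\kappa_i$. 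For flags with $b=1$, all $d-1$ point coordinates copied and $d=\binom s2+1$, that would give the exponent $\binom s2/\log_2(\binom s2+1)$. This exceeds $s-1$ for large $s$, contradicting $R(s,t)\le\binom{s+t-2}{s-1}$. The correct estimate is simply $\dim Y\le\dim\Ical_i\le L+mv_i-b\kappa_i\le L+m$, and it needs the full inequality $m(v_i-1)\le b\kappa_i$.

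At equality the decisive point is that $D\nsubseteq Y$. Your sentence about distinct labels does not prove this, since a set disjoint from $D$ can have closure containing $D$, and you explicitly defer it. The paper argues as follows. If $D\subseteq Y$, then $D$ lies in a component $Y_1$ with $\dim Y_1\le\dim D$, so $D=Y_1$ is a component. Then $D\setminus\bigcup_{i\ge2}Y_i$ is a nonempty open subset of $Y$ missing the image $S$, contradicting the density of $S$. Step~2 has a similar problem: the fibre over an arbitrary label tuple need not have the generic dimension. Fibre over full configurations instead, where the coefficient fibre has dimension exactly $L-(b\nu-m)v_i$, and then add $\dim\Ccal_i'$.

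The statement also promises an \emph{indexed} vertex set with polynomial-time decoding. Schwartz--Zippel plus a membership test gives only density; decoding is not ``evaluating fixed polynomials''. The paper supplies this through Lemma~\ref{lem:decoder}. A fixed linear change of coordinates gives the diagonal filter a nonzero constant leading coefficient in the last variable, so every fibre polynomial has degree exactly $\delta$. Its rational roots are listed via $\gcd(f,X^q-X)$ and trace splitting. An index $(y,j)\in\F_q^{m-1}\times\{0,\dots,q-\delta-1\}$ is then sent to the $j$th nonroot of fibre $y$.

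Two smaller repairs are needed. First, the filters must be computed once over the prime field, independently of $h$, not over $\F_q$. Elimination over $\F_2$ of the witness model is an acceptable alternative to the paper's enumeration with row-echelon descent, since the elimination ideal cuts out $Y$ and hence has a generator nonzero on $D$. Second, searching for $A=a$ in a fixed subfield works only for $h$ divisible by its degree. Lemma~\ref{lem:adaptive} instead finds $a$ among $\delta+1$ fixed elements of every $\F_q$ with $q>\delta$.
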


For flags, the exact prefix-rank calculation includes degenerate configurations and proves the codimension is sharp. Compressed interpolation works at every distinct tuple, including exceptional labels. Degree control, prime-field descent and specialization make the resulting polynomial prescription deterministic. The dense decoder supplies an index set rather than only a membership test. These details are needed to turn the dimension argument into locally accessible graphs.

The relation to evasive sets \cite{DKL,LNZ} lies in controlling intersections with algebraic configurations. Those works bound how a set meets varieties; here the forbidden object is an ordered tuple in a polynomially parameterized graph, and the rank estimate is independent of its coefficients. Section~\ref{sec:general} also gives the fixed-characteristic version. Appendices contain the decoder, the original strict variants, the quantified finite-fiber argument, and the elementary geometric conventions.

\section{The graph and its rank certificate}\label{sec:rank}
Let $\Bbbk$ be a field and put $n=d-1$. A projective point in $\PP^{d-1}(\Bbbk)$ is a one-dimensional subspace of $\Bbbk^d$. A nonzero column vector represents it, with scalar multiples representing the same point. A hyperplane is the kernel of a nonzero linear functional, represented by a row vector up to nonzero scale. A flag is an incident pair: its point lies in its hyperplane, or equivalently the row applied to the column is zero.

We use points whose first coordinate is nonzero and hyperplanes whose last coefficient is nonzero. Scaling makes these coordinates equal to $1$. Write
\begin{equation}\label{eq:chart}
P(p)=(1,p_1,\ldots,p_n)^T,\qquad
H(p,\beta)=\left(-p_n-\sum_{i=1}^{n-1}\beta_i p_i,\beta_1,\ldots,\beta_{n-1},1\right).
\end{equation}
The free parameters are $p\in\Bbbk^n$ and $\beta\in\Bbbk^{n-1}$. There are $2d-3$ of them, so this chart of the flag space is identified with $\A^{2d-3}$. Its first hyperplane coefficient is forced by incidence:
\[
H(p,\beta)P(p)=-p_n-\sum_{i=1}^{n-1}\beta_i p_i+\sum_{i=1}^{n-1}\beta_i p_i+p_n=0.
\]
Both representing vectors are nonzero because of their normalized coordinates. Self-incidence is built into the parameterization.

For a label $z=(z_1,\ldots,z_m)$ in the original construction, copy $p_i(z)=z_i$ for $1\le i\le m$. The remaining $r=n-m$ point coordinates and all $d-2=n-1$ slope coordinates are polynomial functions of $z$. There are $c=r+d-2$ outputs. Initially their coefficients are independent variables; their values will be fixed later. The point attached to a label recovers that label from its copied coordinates, so different labels give different projective points. Hyperplanes may repeat.

On a finite label set choose a total order $\prec$. Distinct labels $z\prec w$ are adjacent precisely when
\begin{equation}\label{eq:adjacency}
z\sim w\quad\Longleftrightarrow\quad K(z,w)=H(w)P(z)=0,
\end{equation}
where
\begin{equation}\label{eq:kernel}
K(z,w)=p_n(z)-p_n(w)+\sum_{i=1}^{n-1}\beta_i(w)\bigl(p_i(z)-p_i(w)\bigr).
\end{equation}
This tests whether the earlier point belongs to the later hyperplane. The graph is undirected because the order chooses the test for each unordered pair; $K$ itself need not be symmetric. Equal labels are never joined. In characteristic two the minus signs equal plus signs.

\begin{lemma}\label{lem:rank}
For flags in the chart \eqref{eq:chart} over $\F_{2^H}$, the graph \eqref{eq:adjacency} has independence number at most $d^H$, for every assignment and every total order. More precisely, if
\begin{equation}\label{eq:feature-space}
\Wcal_H(P)=\spanop\left\{
\prod_{\ell=0}^{H-1}P_{\sigma(\ell)}^{2^\ell}:\
\sigma:\{0,\ldots,H-1\}\longrightarrow\{0,\ldots,d-1\}
\right\}
\end{equation}
is a space of functions on the labels, then $\alpha\le\dim\Wcal_H(P)$. Restricting the label set preserves these bounds.
\end{lemma}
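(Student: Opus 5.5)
We will use the standard triangular-rank argument. Let $I=\{z^{(1)}\prec\cdots\prec z^{(N)}\}$ be an independent set listed in the order $\prec$. Because the labels are distinct and pairwise nonadjacent, $K(z^{(j)},z^{(i)})=H(z^{(i)})P(z^{(j)})\neq0$ for $j<i$. Self-incidence gives $H(z^{(i)})P(z^{(i)})=0$.

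The first step is to turn this pattern into a triangular matrix by means of a Frobenius-twisted indicator. Over $\F_{2^H}$, the map $x\mapsto x^{2^H-1}$ is $1$ on nonzero elements and $0$ at $0$. So define
\[
f_i(z)=1-\bigl(H(z^{(i)})P(z)\bigr)^{2^H-1},
\]
viewed as a function on labels. Then $f_i(z^{(i)})=1$ and $f_i(z^{(j)})=0$ for $j<i$. The matrix $\bigl(f_i(z^{(j)})\bigr)$ is therefore triangular with ones on the diagonal, so the $f_i$ are linearly independent and $N\le\dim\spanop\{1,f_1,\dots,f_N\}$. To land in $\Wcal_H(P)$ exactly, we will instead use $g_i(z)=\bigl(H(z^{(i)})P(z)\bigr)^{2^H-1}$, which satisfies $g_i(z^{(i)})=0$ and $g_i(z^{(j)})=1$ for $j<i$. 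We then adjust by triangular row operations, or use the constant: the constant $1$ lies in $\Wcal_H$ by taking $\sigma\equiv0$, since $P_0=1$. This gives linear independence of $f_i=1-g_i$ inside $\Wcal_H(P)$. Here $P_0,\dots,P_{d-1}$ denote the coordinate functions $z\mapsto P(z)_\ell$.

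The second step, which is the main point, is to show $g_i\in\Wcal_H(P)$. Write $L(z)=\sum_{c}h_cP_c(z)$ with constants $h_c=H(z^{(i)})_c$. Since $2^H-1=\sum_{\ell=0}^{H-1}2^\ell$ and Frobenius is additive in characteristic two,
\[
L^{2^H-1}=\prod_{\ell=0}^{H-1}L^{2^\ell}=\prod_{\ell=0}^{H-1}\Bigl(\sum_c h_c^{2^\ell}P_c^{2^\ell}\Bigr).
\]
Expanding the product gives a linear combination of the monomials $\prod_\ell P_{\sigma(\ell)}^{2^\ell}$. This is exactly the spanning set in \eqref{eq:feature-space}, so $f_i\in\Wcal_H(P)$ and $\alpha\le\dim\Wcal_H(P)\le d^H$. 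The count $d^H$ comes from the number of maps $\sigma$.

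The argument never used how the coefficients were assigned or which order was chosen, only incidence and the nonadjacency pattern, so the bound is uniform in both. For restriction to a subset $S$ of labels, an independent set in the induced graph is independent in the original graph on the same points. Equivalently, the same functions restricted to $S$ work, and restriction cannot increase dimension. The only care needed is the orientation of the triangular pattern, which is settled by listing $I$ in $\prec$-order so that the earlier point is always tested against the later hyperplane, as in \eqref{eq:adjacency}.
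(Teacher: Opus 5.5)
Your proof is correct and is essentially the paper's argument. Your functions $f_i$ are exactly the columns $z\mapsto B_{z,z^{(i)}}$ of the paper's matrix $B=1-K^{2^H-1}$, whose triangular restriction to an independent set bounds its size by the rank. The Frobenius expansion of $K^{2^H-1}$, together with $\prod_\ell P_0^{2^\ell}=1$ (the $\sigma\equiv0$ feature absorbing the constant), places every column in $\Wcal_H(P)$; the detour through $g_i$ and ``triangular row operations'' is unnecessary, since $f_i=1-g_i\in\Wcal_H(P)$ follows directly once $1$ and $g_i$ lie in it.
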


\begin{proof}
For this proof, $P_i(z)$ and $H_i(w)$ denote coordinates indexed by $0\le i\le d-1$. Put $Q=2^H$ and form
\[
B_{z,w}=1-K(z,w)^{Q-1}.
\]
Every nonzero element of $\F_Q$ has $(Q-1)$st power $1$, whereas $0^{Q-1}=0$. Thus $B$ is the incidence indicator, and self-incidence gives $B_{z,z}=1$.

If $z_1\prec\cdots\prec z_u$ is independent, then $K(z_i,z_j)\ne0$ for $i<j$, so $B_{z_i,z_j}=0$. Its principal submatrix is lower triangular with diagonal entries $1$. Hence
\begin{equation}\label{eq:triangular}
u\le\rank_{\F_Q}B.
\end{equation}
In characteristic two, $(x+y)^{2^\ell}=x^{2^\ell}+y^{2^\ell}$. Since $Q-1=1+2+\cdots+2^{H-1}$,
\begin{equation}\label{eq:frobenius-factor}
K(z,w)^{Q-1}=
\prod_{\ell=0}^{H-1}\left(\sum_{i=0}^{d-1}P_i(z)^{2^\ell}H_i(w)^{2^\ell}\right)
=\sum_\sigma U_\sigma(z)V_\sigma(w),
\end{equation}
where
\[
U_\sigma(z)=\prod_{\ell=0}^{H-1}P_{\sigma(\ell)}(z)^{2^\ell},\qquad
V_\sigma(w)=\prod_{\ell=0}^{H-1}H_{\sigma(\ell)}(w)^{2^\ell}.
\]
Each summand is an outer product. Let $\sigma_0$ be constantly zero. The normalized coordinate $P_0=1$ gives $U_{\sigma_0}=1$, so the constant term is absorbed into the same feature:
\[
B_{z,w}=U_{\sigma_0}(z)\bigl(1-V_{\sigma_0}(w)\bigr)
-\sum_{\sigma\ne\sigma_0}U_\sigma(z)V_\sigma(w).
\]
Every column of $B$ therefore lies in $\Wcal_H(P)$, a space spanned by at most $d^H$ functions. Together with \eqref{eq:triangular} this proves the bounds. Restriction only restricts the evaluations in this factorization.
\end{proof}

The degrees of the point and hyperplane coordinates do not enter the proof. We can give the polynomial family enough freedom to exclude cliques without weakening the independence estimate. This is the incidence-rank method of \cite{IM}, with the affine constant feature accounted for explicitly.

\section{The geometry of every ordered clique}\label{sec:geometry}
Let $\Ccal_{s,d}$ consist of ordered incident point--hyperplane flags satisfying
\begin{equation}\label{eq:clique-config}
P_i\subseteq H_j\qquad(1\le i<j\le s).
\end{equation}
Positions give the configuration order; there is no order on the algebraically closed field. Repeated points and flags are permitted. Every clique supplies such a tuple, so a dimension bound for this larger family applies to the cliques to be excluded.

\subsection{Exact prefix-rank strata}\label{sec:exact}
All dimensions in this section are over $\overline{\F}_2$. Put
\begin{equation}\label{eq:kappa}
u_* =\min(s,d-1),\qquad
\kappa_{s,d}=s(d-1)-\frac{u_*(2d-u_*-1)}2
=\begin{cases}
s(d-1)-\binom d2,&d\le s+1,\\
\binom s2,&d\ge s+1.
\end{cases}
\end{equation}
The two formulas agree at $d=s+1$.

\begin{theorem}\label{thm:exact-dimension}
The space of ordered incident flags satisfying \eqref{eq:clique-config} has dimension
\[
\dim\Ccal_{s,d}=s(d-2)+\frac{u_*(2d-u_*-1)}2.
\]
The same dimension is attained in the affine chart \eqref{eq:chart}. Thus its codimension in $s$ copies of that chart is $\kappa_{s,d}$.
\end{theorem}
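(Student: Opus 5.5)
The plan is to stratify $\Ccal_{s,d}$ by the rank sequence of the prefix spans of the points. Count the dimension of each stratum by building the flags one at a time, and then check that the maximal stratum meets the chart \eqref{eq:chart}.

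\textbf{Strata.} For an ordered tuple $(P_1,H_1,\ldots,P_s,H_s)$, let $u_j=\dim\spanop(P_1,\ldots,P_j)$ as a linear subspace of $\overline{\F}_2^{\,d}$. Then $u_1=1$, and $u_j-u_{j-1}\in\{0,1\}$. The conditions $P_i\subseteq H_j$ for $i\le j$ (incidence for $i=j$ together with \eqref{eq:clique-config}) say exactly that $H_j$ contains $W_j=\spanop(P_1,\ldots,P_j)$. Hence $u_j\le d-1$ for all $j$, and therefore $U:=u_s\le u_*=\min(s,d-1)$. Each rank sequence cuts out a locally closed subset, since rank conditions are determinantal. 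There are finitely many sequences, so $\dim\Ccal_{s,d}$ is the maximum of the stratum dimensions.

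\textbf{Fiber count.} Build a point of a stratum step by step and view each stratum as an iterated fibration.
\begin{itemize}
\item If $u_j=u_{j-1}+1$, then $P_j$ ranges over the open set $\PP^{d-1}\setminus\PP(W_{j-1})$, which has dimension $d-1$.
\item If $u_j=u_{j-1}$, then $P_j$ ranges over $\PP(W_{j-1})$, which has dimension $u_j-1$.
\item In either case $H_j$ ranges over the hyperplanes containing $W_j$, a projective space of dimension $d-1-u_j$.
\end{itemize}
A step without rank increase therefore contributes $d-2$. A step raising the rank to $u$ contributes $(d-1)+(d-1-u)=(d-2)+(d-u)$. Every fiber is irreducible of constant dimension, being an open subset of a projective space that depends algebraically on the earlier data. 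So each stratum is irreducible, and its dimension is
\[
s(d-2)+\sum_{u=1}^{U}(d-u)=s(d-2)+\frac{U(2d-U-1)}2 .
\]
This expression is increasing in $U$ for $U\le d-1$. It is maximized at $U=u_*$, which is attained by letting the rank increase at the first $u_*$ steps, and this gives the stated dimension.

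\textbf{Chart and codimension.} The chart conditions are open: the first coordinate of each $P_j$ is nonzero, and each $H_j$ has nonzero last coefficient, equivalently $e_d\notin H_j$. The top stratum is irreducible, so it suffices to show that it meets the chart. To do so, take $W_{u_*}$ generic of dimension $u_*\le d-1$ with $e_d\notin W_{u_*}$, and take $P_1,\ldots,P_{u_*}$ generic spanning points, which are off the hyperplane $\{x_1=0\}$. Later points are taken in $W_{u_*}$, again off $\{x_1=0\}$. Each $H_j$ is chosen to contain $W_j$ but not $e_d$; this is possible because $e_d\notin W_j$. The chart dimension then equals the full dimension. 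Since $s$ copies of the chart have dimension $s(2d-3)$, the codimension is
\[
s(2d-3)-s(d-2)-\frac{u_*(2d-u_*-1)}2=\kappa_{s,d}
\]
as in \eqref{eq:kappa}.

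The main point needing care is the fiber-dimension argument: each stratum is a tower of open subsets of projective-space bundles, so its dimension is the sum of the fiber dimensions and it is irreducible. For the upper bound alone, the fiber-dimension inequality on the constructible strata suffices.
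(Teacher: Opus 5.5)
Your proof is correct and is essentially the paper's argument. It uses the same stratification by the rank path of the prefix spans, the same step contributions $d-2$ and $(d-2)+(d-u)$, maximization at $u_*$, and the same chart check via a final span that avoids the last basis vector and is not contained in $\{x_1=0\}$. One wording fix: when $s>d-1$, several rank paths end at $u_*$, so ``the top stratum'' should be the one you actually use (increases at the first $u_*$ steps); its irreducibility makes explicit what the paper's ``nonempty open part of the stratum'' tacitly uses.
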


\begin{proof}
For each position put $R_j=\spanop(P_1,\ldots,P_j)$, with $R_0=0$. Ordered incidence together with self-incidence gives $R_j\subseteq H_j$, so every prefix span is proper. Stratify by its rank path $\rho_j=\dim R_j$, ending at rank $u$. The first rank is $1$; every subsequent step preserves the rank or increases it by one.

Fix the choices through position $j-1$. If the rank stays at $v=\rho_j$, the new point is in $\PP(R_{j-1})$, of dimension $v-1$. After choosing it, $H_j$ can be any hyperplane containing $R_j$. Such hyperplanes form the projective dual of $\Bbbk^d/R_j$, of dimension $d-v-1$. The step therefore contributes $d-2$.

If the rank increases to $v$, the new point is in $\PP^{d-1}\setminus\PP(R_{j-1})$, an open set of dimension $d-1$. The hyperplane still contributes $d-v-1$, for a total of $(d-2)+(d-v)$. This includes the first step, from rank $0$ to rank $1$. There is exactly one increase to each rank $v=1,\ldots,u$, giving
\begin{equation}\label{eq:rank-stratum}
s(d-2)+\sum_{v=1}^u(d-v)=s(d-2)+\frac{u(2d-u-1)}2.
\end{equation}
These are dimension equalities, not just parameter upper bounds. On a finite cover where the prefix bases are chosen by nonzero minors, the successive point choices are projective-space bundles or open subsets of them, and the hyperplane choices are projective-space bundles. Earlier hyperplanes do not constrain later points. The dimensions therefore add. The expression increases with $u$ up to $u_*$. This final rank is realized by taking the first $u_*$ points independent and later points in their span.

To see that a maximal stratum meets the affine chart, choose its final span not to contain the last coordinate vector $e_{d-1}$ and to contain a vector with nonzero first coordinate. Within it choose points with the required ranks and nonzero first coordinate. Every prefix then admits an annihilating functional nonzero on $e_{d-1}$: otherwise $e_{d-1}$ would lie in that prefix. Normalize this functional to have last coefficient $1$. These conditions define a nonempty open part of the stratum, so the chart has the same dimension. Subtracting \eqref{eq:rank-stratum} at $u=u_*$ from $s(2d-3)$ gives \eqref{eq:kappa}.
\end{proof}

For $d\le s+1$, the complete-flag bound is therefore exact. In the original copied-coordinate model with $m\ge1$, the maximal stratum also has a nonempty distinct-label open subset: its point span can be chosen so that projection onto the first $m$ affine coordinates is nonconstant, after which finitely many pairwise coincidences are avoided generically. Counting the same unrestricted configuration space more carefully cannot lower its dimension in the range relevant to the main exponent.

\section{Universal interpolation and the incidence dimension}\label{sec:interpolation}
We need to prescribe each polynomial output at any $s$ distinct labels. Restricting to generic labels would leave exceptional cliques untreated. For $z\in\Bbbk^m$ and a formal indeterminate $T$, put
\begin{equation}\label{eq:compressed-features}
\ell_T(z)=z_1+z_2T+\cdots+z_mT^{m-1},\qquad
\phi_{j,e}(z)=[T^e]\ell_T(z)^j
\end{equation}
for $0\le j<s$ and $0\le e\le(m-1)j$. Each feature has label degree at most $j$. Their number of slots is
\begin{equation}\label{eq:M}
M(m,s)=\sum_{j=0}^{s-1}\bigl((m-1)j+1\bigr)=s+(m-1)\binom s2.
\end{equation}
Slots need not be independent. For instance, in characteristic two $(z_1+z_2T)^2=z_1^2+z_2^2T^2$, so $\phi_{2,1}=0$ when $m=2$. We retain every slot; what matters is evaluation rank.

\begin{lemma}[Universal interpolation]\label{lem:interpolation}
For any field and any $s$ pairwise distinct labels $z_1,\ldots,z_s$, the $s\times M(m,s)$ matrix
\[
E(z_1,\ldots,z_s)=\bigl(\phi_{j,e}(z_i)\bigr)_{i,(j,e)}
\]
has rank $s$. Linear combinations of the features can take arbitrary prescribed values at those labels.
\end{lemma}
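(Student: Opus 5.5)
The plan is to reduce to one-variable interpolation along a separating linear form, with the formal variable $T$ replaced by scalars. The rank of $E$ is at most $s$ trivially, so we must show its $s$ rows are linearly independent. Equivalently, we must show there is no nonzero vector $(c_1,\ldots,c_s)\in\Bbbk^s$ with $\sum_i c_i\phi_{j,e}(z_i)=0$ for every slot $(j,e)$. Because the features are the coefficients of $\ell_T(z)^j$ in $T$, such a vector satisfies the polynomial identity
\[
\sum_{i=1}^s c_i\,\ell_T(z_i)^j=0\quad\text{in }\Bbbk[T],\qquad 0\le j<s.
\]
These identities persist after specializing $T$ to any element $\tau$ of any extension field $\Bbbk'\supseteq\Bbbk$. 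The rank of $E$ does not change under field extension, so we may pass to an infinite (e.g.\ algebraically closed) extension.

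The key step is to choose $\tau$ so that the scalars $\lambda_i=\ell_\tau(z_i)=z_{i,1}+z_{i,2}\tau+\cdots+z_{i,m}\tau^{m-1}$ are pairwise distinct. For $i\ne i'$ the difference $\ell_T(z_i)-\ell_T(z_{i'})$ is a nonzero polynomial in $T$ of degree at most $m-1$, since the labels differ in some coordinate. It therefore has finitely many roots. The product of these $\binom s2$ nonzero polynomials is nonzero, so some $\tau$ in the infinite field avoids all the roots.

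With this $\tau$, the identities say $\sum_i c_i\lambda_i^j=0$ for $0\le j<s$. The coefficient matrix is the Vandermonde matrix $(\lambda_i^j)$, whose determinant $\prod_{i<i'}(\lambda_{i'}-\lambda_i)$ is nonzero. Hence $c=0$, the rows of $E$ are independent, and $\rank E=s$.

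The interpolation claim follows at once. A full-row-rank $s\times M$ matrix defines a surjective map $\Bbbk^M\to\Bbbk^s$, so any prescribed values at $z_1,\ldots,z_s$ are attained by some coefficient vector, even over the original field $\Bbbk$, because rank is field-independent.

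The only delicate point is finite base fields such as $\F_2$, where no suitable $\tau$ need exist in $\Bbbk$ itself. The extension step, together with the invariance of rank under field extension, handles this. It is also why the features are kept with the formal variable $T$ rather than with a fixed scalar. The vanishing of slots like $\phi_{2,1}$ in characteristic two is harmless, since only the rank matters.
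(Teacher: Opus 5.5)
Your proof is correct and follows the paper's argument: both pass to an infinite (algebraically closed) extension, choose $\tau$ avoiding the roots of the differences $\ell_T(z_i)-\ell_T(z_{i'})$, and reduce to a nonsingular Vandermonde matrix in the values $\ell_\tau(z_i)$. The only difference is cosmetic: you show dually that any left-kernel vector of $E$ is annihilated by that Vandermonde matrix, whereas the paper observes that the Vandermonde columns lie in the column span of $E$.
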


\begin{proof}
First work over an algebraic closure. For $i\ne i'$, the polynomial $\ell_T(z_i)-\ell_T(z_{i'})$ is nonzero, since its coefficients are coordinate differences. Choose $\tau$ outside the finitely many roots of all such polynomials. Then $a_i=\ell_\tau(z_i)$ are pairwise distinct. For each $j<s$,
\[
\ell_\tau(z)^j=\sum_{e=0}^{(m-1)j}\tau^e\phi_{j,e}(z).
\]
The evaluation columns of $1,\ell_\tau,\ldots,\ell_\tau^{s-1}$ belong to the column span of $E$. They form a Vandermonde matrix with determinant $\prod_{i<i'}(a_{i'}-a_i)\ne0$. Hence $E$ has rank $s$. Matrix rank is unchanged by extending scalars, because nonvanishing of a minor descends. The auxiliary $\tau$ need not belong to the field of the labels.
\end{proof}

Enumerate slots by increasing $j$, then increasing $e$. Give each output its own independent block. In the original flag model the output order is $p_{m+1},\ldots,p_n,\beta_1,\ldots,\beta_{n-1}$. This fixes an order on the $L=cM(m,s)$ coefficients $A=(A_0,\ldots,A_{L-1})$. Distinct coefficient tuples remain distinct in parameter space even if they give the same polynomial functions.

\begin{lemma}\label{lem:universal-dimension}
Let $\Ccal\subseteq(\A^\nu)^s$ be constructible, with $\dim\Ccal\le\nu s-\kappa$. Copy $m$ chart coordinates as labels, require pairwise distinct labels, and parameterize the other $c=\nu-m$ coordinates by independent blocks of the features \eqref{eq:compressed-features}. The coefficient--configuration incidence set then satisfies
\begin{equation}\label{eq:universal-dimension}
\dim\Ical\le L+ms-\kappa.
\end{equation}
\end{lemma}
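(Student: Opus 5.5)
The incidence set is
\[
\Ical=\bigl\{(A,z_1,\ldots,z_s):\ z_i\ \text{pairwise distinct},\ (x(A,z_1),\ldots,x(A,z_s))\in\Ccal\bigr\}\subseteq\A^L\times(\A^m)^s,
\]
where $x(A,z)\in\A^\nu$ has copied coordinates $z$ and remaining $c$ coordinates given by the feature blocks with coefficients $A$. I would bound $\dim\Ical$ by fibering over the configuration rather than over the coefficients. Consider the map $\pi:\Ical\to\Ccal$, $(A,z)\mapsto(x(A,z_i))_i$. This is a morphism of constructible sets, and since a configuration determines its labels through the copied coordinates, the fiber over a point $(x_1,\ldots,x_s)\in\Ccal$ consists of the fixed labels $z_i$ together with all coefficient vectors $A$ satisfying the interpolation conditions at those labels. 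The plan is to show every nonempty fiber is an affine subspace of dimension exactly $L-cs$, and then add dimensions.

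For the fiber computation, the conditions on $A$ split over the $c$ independent output blocks. For one output, the condition is $E(z_1,\ldots,z_s)\,a=y$, where $a\in\A^{M(m,s)}$ is that block and $y$ is the prescribed column of values. By Lemma~\ref{lem:interpolation}, $E$ has rank $s$ at any pairwise distinct labels, so the solution set is an affine space of dimension $M(m,s)-s$, and it is nonempty for every $y$. Taking the product over the $c$ blocks gives fibers of dimension $cM-cs=L-cs$. The distinct-label hypothesis is used exactly here, which is why it is imposed on $\Ical$. The rank statement holds over the algebraic closure, as that lemma notes, so it applies pointwise to all geometric points.

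Next, the image of $\pi$ lies in $\Ccal\cap\{\text{distinct labels}\}$, which has dimension at most $\nu s-\kappa$. The fiber-dimension inequality for morphisms of constructible sets (via Chevalley, stratifying the image so the fiber dimension is constant) gives
\[
\dim\Ical\le\dim\pi(\Ical)+(L-cs)\le\nu s-\kappa+L-cs=L+ms-\kappa,
\]
using $\nu-c=m$.

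The only delicate point is making the fiber statement rigorous as a morphism statement rather than a pointwise count. I would handle it by noting that $\Ical$ is cut out inside $\A^L\times U$, where $U\subseteq(\A^m)^s$ is the open distinct-label set, by the linear conditions $E(z)a_k=y_k(x)$. Over the locus where a chosen $s\times s$ minor of $E$ is nonzero, Cramer's rule gives an explicit isomorphism of $\Ical$ with the product of $\pi(\Ical)$ and $\A^{L-cs}$. By the lemma these loci cover $U$, so finitely many charts suffice, and dimension is the maximum over the charts.
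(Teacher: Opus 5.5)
Your proposal is correct and is essentially the paper's argument. You fix the configuration, use Lemma~\ref{lem:interpolation} to get nonempty affine fibers of dimension $L-cs$, and make this rigorous by covering with nonzero-minor charts on which Cramer's rule exhibits the incidence as a product with $\A^{L-cs}$. The appeal to Chevalley and fiber dimension is redundant, since the chart decomposition alone gives $\dim\Ical\le\dim\Ccal+L-cs$.
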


\begin{proof}
Fix a full configuration, including its labels and output values. For one output the evaluation conditions have matrix $E$ and rank $s$, by Lemma~\ref{lem:interpolation}. They are onto and leave an affine solution space of dimension $M-s$. Disjoint coefficient blocks leave dimension $L-cs$ in total.

Cover the configuration family by the finitely many open sets on which a specified $s\times s$ evaluation minor is nonzero. On one such chart, leave $M-s$ coefficients free in each block and solve for the other $s$. Matrix inversion has only powers of the chosen nonzero determinant in its denominators, so the solved coefficients are regular on the chart. The incidence is a product there with $\A^{L-cs}$. Dimensions add, and the finite cover gives
\[
\dim\Ical\le\dim\Ccal+L-cs\le L+ms-\kappa.
\]
\end{proof}

In the original model, let $\Ccal_{s,d}^{\circ}$ be the ordered configurations in the chart with pairwise distinct first $m$ point-coordinate labels. The incidence $\Ical$ records coefficients that assign exactly each flag at its own label. Lemma~\ref{lem:universal-dimension} gives
\begin{equation}\label{eq:original-dimension}
\dim\Ical\le L+\binom d2-rs.
\end{equation}
The coefficient-image closure is proper if $rs>\binom d2$. The image closure remembering also one label is proper if $\binom d2-rs<m$, equivalently $r(s-1)>\binom{d-1}2$. Distinctness is essential: different flags with the same copied label need not impose independent evaluation conditions.

\section{Degree, descent and effective specialization}\label{sec:algebra}
Properness gives a nonzero equation for an exceptional set. A deterministic coefficient prescription also needs a degree bound and an equation over the prime field. We use cumulative degree to retain components of every dimension. For a reduced affine algebraic set $X$, set
\[
\cdeg X=\sum_{Z\text{ an irreducible component of }X}\deg Z,
\qquad \cdeg\varnothing=0.
\]
The degree of an affine irreducible set is the degree of its projective closure. A generic complementary affine linear section consists of that many points. In particular, a finite reduced set has cumulative degree equal to its number of geometric points.

\begin{lemma}\label{lem:degree}
The following statements hold over an algebraically closed field.
\begin{enumerate}
\item If $f_1,\ldots,f_u$ have positive degree, then
\begin{equation}\label{eq:bezout}
\cdeg\bigl(X\cap V(f_1,\ldots,f_u)\bigr)\le\cdeg X\prod_{i=1}^u\deg f_i.
\end{equation}
\item If $\pi$ is linear, then $\cdeg\overline{\pi(X)}\le\cdeg X$.
\item If $Y\subsetneq\A^N$ is proper, there is a nonzero polynomial of degree at most $\cdeg Y$ vanishing on $Y$. For $Y=\varnothing$, use $1$.
\item Over $\overline{\F}_2$, if $Y$ is invariant under coordinatewise Frobenius and its inverse, a nonzero vanishing polynomial with a given degree bound can be chosen over $\F_2$ without increasing that bound.
\end{enumerate}
\end{lemma}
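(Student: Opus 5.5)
The plan is to treat the four items separately, reducing each to standard facts of intersection theory over an algebraically closed field $\Bbbk=\overline{\F}_2$ (or any algebraically closed field for (1)--(3)). For (1), I would induct on $u$, so that it suffices to prove $\cdeg(X\cap V(f))\le\cdeg X\cdot\deg f$ for a single hypersurface. Decompose $X$ into irreducible components $Z$. If $Z\subseteq V(f)$, then $Z$ itself is a component (or contained in one) of the intersection, contributing $\deg Z\le\deg Z\deg f$. Otherwise every component of $Z\cap V(f)$ has dimension $\dim Z-1$, and the refined B\'ezout inequality (Fulton's, or Heintz's form for affine sets, applied to the projective closures) gives $\sum\deg\le\deg Z\deg f$. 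The one subtlety is that components of $X\cap V(f)$ coming from different $Z$ may coincide or nest; since $\cdeg$ sums only over genuine components of the union, this can only lower the total. Heintz's inequality $\deg(X\cap Y)\le\deg X\deg Y$ for cumulative degree already handles embedded lower-dimensional components, so I would simply cite it with $Y=V(f)$, $\cdeg V(f)\le\deg f$.

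For (2), I would reduce to one irreducible component $Z$, since the closure of the image of a union is the union of closures. If $\dim\overline{\pi(Z)}=e$, a generic affine linear space $\Lambda$ of codimension $e$ in the target meets $\overline{\pi(Z)}$ in $\deg\overline{\pi(Z)}$ points. These points may be taken inside the dense constructible set $\pi(Z)$. The pullback $\pi^{-1}(\Lambda)$ is again linear, and $Z\cap\pi^{-1}(\Lambda)$ has at least that many components, one over each point. Applying (1) with linear forms gives $\cdeg(Z\cap\pi^{-1}\Lambda)\le\deg Z$, which concludes. The point to check is genericity: $\Lambda$ has to avoid the closed complement of $\pi(Z)$ in its closure, which has smaller dimension, and this holds for generic $\Lambda$.

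For (3), for each component $Z\subsetneq\A^N$ I would take a generic linear projection $\A^N\to\A^{\dim Z+1}$. The closure of the image of $Z$ is a hypersurface of degree at most $\deg Z$ by (2), and it is cut out by a single polynomial of that degree; pulling this back gives a nonzero polynomial vanishing on $Z$ of degree $\le\deg Z$. The product over the components vanishes on $Y$ and has degree $\le\cdeg Y$. The case $Y=\varnothing$ is trivial.

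For (4), which I expect to be the main obstacle, the difficulty is the descent rather than degree bookkeeping. Let $D$ be the degree bound and let $I_D$ be the $\Bbbk$-space of polynomials of degree $\le D$ vanishing on $Y$; it is nonzero by hypothesis. The map $\mathrm{Fr}:f\mapsto f^{(2)}$, which squares the coefficients, is a bijective semilinear map preserving degree, and Frobenius invariance of $Y$ (both directions) shows it maps $I_D$ onto itself. Frobenius invariance also makes $I_D$ a subspace defined over $\F_2$: by Lang's theorem / Galois descent for vector spaces, a $\Bbbk$-subspace of $\Bbbk[x]_{\le D}$ stable under the $\sigma$-semilinear Frobenius has a basis fixed by it, i.e.\ it is spanned by $\F_2$-polynomials. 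Concretely, the reduced row echelon basis of $I_D$ is unique, hence Frobenius-fixed, and so has $\F_2$ coefficients. Any nonzero element of that basis is the required polynomial. This echelon argument avoids invoking Lang's theorem and also covers the case where $\Bbbk$ is only $\overline{\F}_2$.
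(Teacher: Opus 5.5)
Your proposal is correct. For items (1), (3) and (4) it follows the paper's proof essentially step for step:
\begin{itemize}
\item in (1), you intersect one component with one hypersurface at a time using refined projective B\'ezout, and coincident or nested pieces only lower the sum;
\item in (3), you take a generic linear projection to $\A^{\dim Z+1}$, pull back the hypersurface equation, and multiply over components;
\item in (4), you use Frobenius stability of the space of bounded-degree vanishing polynomials together with uniqueness of the reduced row-echelon basis.
\end{itemize}

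The one genuine difference is in (2). The paper first uses the fiber-dimension theorem to find an open part of $\overline{\pi(Z)}$ over which fibers have the generic fiber dimension. It then adds that many generic source hyperplanes, so that the pulled-back section becomes a finite set of at least $\deg\overline{\pi(Z)}$ points cut out by $\dim Z$ linear equations, and it counts points.

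You stop after pulling back the target section and count irreducible components instead. The set $Z\cap\pi^{-1}(\Lambda)$ is a disjoint union of $\deg\overline{\pi(Z)}$ nonempty closed fibers, so it has at least that many components, each of degree at least one. Item (1), applied with degree-one forms, bounds its cumulative degree by $\deg Z$. This works precisely because (1) is stated for cumulative degree with no equidimensionality or zero-dimensionality assumption. It spares you the fiber-dimension theorem and the extra generic hyperplanes. The paper's route has the mild advantage of ending in the classical point-count form of B\'ezout.

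A few points you leave implicit are harmless:
\begin{itemize}
\item A pulled-back affine form that becomes constant must be the zero constant, since $\Lambda$ meets $\pi(Z)$, so it can simply be omitted.
\item The generic projection in (3) preserves $\dim Z$ by Noether normalization.
\item The pulled-back polynomial in (3) is nonzero because the projection is surjective.
\end{itemize}
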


\begin{proof}
The first assertion is cumulative B\'ezout \cite[Theorem 1.1]{LR}. Intersect one irreducible component with one hypersurface at a time. If the component is contained in the hypersurface, its degree is unchanged and at most its old degree times the hypersurface degree. Otherwise projective B\'ezout bounds the sum of intersection-component degrees by the product. Summing, possibly overcounting coincident pieces, and iterating gives \eqref{eq:bezout}. Zero equations may be omitted; a nonzero constant equation gives the empty set. No equidimensionality or complete-intersection assumption is used.

For the projection assertion, first let $X$ be irreducible of dimension $e$ and let $Y=\overline{\pi(X)}$ have dimension $f$. By constructibility and the fiber-dimension theorem \cite[Chapter I]{Hartshorne}, the image contains a nonempty open subset of $Y$ over which fibers have dimension $e-f$. A generic complementary affine section of $Y$ consists of $\deg Y$ distinct points in that open subset and avoids infinity. Pull back its $f$ linear equations and add $e-f$ generic source hyperplanes. On each of the finitely many selected fibers, these meet in a finite nonempty set, avoiding its boundary at infinity. The resulting finite intersection in the source has at least $\deg Y$ points and is cut out by $e$ linear equations. Equation~\eqref{eq:bezout} bounds its size by $\deg X$. For reducible $X$, sum over components; image components are among the closures of these component images.

For the third assertion, take an irreducible component $Z$ of dimension $f<N$. A generic surjective linear map $\lambda:\A^N\to\A^{f+1}$ has image closure of dimension $f$ on $Z$. This follows from generic linear Noether normalization, adding one independent ambient linear form. The image closure is an irreducible hypersurface, given by an irreducible polynomial $g_Z$. The projection assertion gives $\deg g_Z\le\deg Z$. Its pullback is nonzero, since $\lambda$ is surjective, and vanishes on $Z$. Multiply the pullbacks over components. The product is nonzero and has degree at most $\cdeg Y$.

For descent, list all monomials up to the required degree. Their coefficient vectors for polynomials vanishing on $Y$ form a nonzero $\overline{\F}_2$-linear space $V$. If $f=\sum c_\nu X^\nu$ vanishes on $Y$, its coefficientwise square $f^{(2)}=\sum c_\nu^2X^\nu$ does too: for $y\in Y$, its coordinatewise square root $x$ belongs to $Y$ and $f^{(2)}(y)=f(x)^2=0$. Frobenius and its inverse therefore preserve $V$.

Reduce a basis matrix of $V$ to reduced row-echelon form. Squaring its entries gives another reduced matrix with the same row space and pivot pattern. Uniqueness makes the two matrices identical. Every entry satisfies $c^2=c$, hence lies in $\F_2$. Any nonzero row gives the required prime-field polynomial. The degree bound is unchanged.
\end{proof}

A prime-field zero set and a coordinate projection commute with Frobenius. Frobenius is a homeomorphism, so projected-image closures have the invariance needed for descent. Individual geometric components may be permuted; the full exceptional set is what descends.

\subsection{A smaller model after substitution}\label{sec:degree-model}
\begin{proposition}\label{prop:substituted-degree}
In a fixed degree-$b$ restriction-of-scalars model, suppose all substituted point and slope coordinates have total degree at most $e$ in coefficients and labels. The closure $X$ of the clique incidence, restricted by distinctness open conditions, satisfies
\begin{equation}\label{eq:substituted-degree}
\cdeg X\le\mathcal D:=(2e)^{b\binom s2}.
\end{equation}
For a single-stage compressed family, one may take $e=s$. For the two-stage family of Section~\ref{sec:two-stage}, one may take $e=1+s(s-1)$.
\end{proposition}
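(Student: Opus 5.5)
We realize the clique incidence as a subset of the ambient space of coefficients and labels. Only the labels and the coefficients remain free variables; every point and slope coordinate has been substituted as a polynomial of degree at most $e$ in them. The flag coordinates of each position are then determined by its label and the coefficients, so the clique configuration is no longer a separate set of variables. The ordered clique conditions \eqref{eq:clique-config} become the equations $K(z_i,z_j)=0$ for $1\le i<j\le s$, with $K$ as in \eqref{eq:kernel}. After restriction of scalars of degree $b$, each such equation over the extension splits into $b$ coordinate equations over the base. This gives $b\binom s2$ polynomial equations in total.

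The degree bound comes from the shape of \eqref{eq:kernel}. Each term is a point coordinate, of degree at most $e$, or a product $\beta_i(w)(p_i(z)-p_i(w))$, of degree at most $2e$. Writing multiplication in the degree-$b$ algebra in base coordinates is bilinear with constant structure constants, so it does not raise the degree. Each of the $b\binom s2$ base equations therefore has degree at most $2e$. Let $V$ be the common zero set of these equations in the affine space of coefficients and labels, which has $\cdeg=1$. The cumulative Bézout inequality \eqref{eq:bezout} of Lemma~\ref{lem:degree} then gives $\cdeg V\le(2e)^{b\binom s2}$. If any equation is constant or has degree zero, we omit it or obtain the empty set, and the bound is unaffected.

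Next we pass to the distinctness-restricted incidence. Intersecting $V$ with the open set where the labels are pairwise distinct deletes some irreducible components and keeps the others. Taking the closure of what remains gives exactly the union of the surviving components. So $X$ is a union of components of $V$, and $\cdeg X\le\cdeg V\le\mathcal D$. I expect this to be the main obstacle: one must check that $X$ is the closure inside this substituted model, and not an image of the larger incidence of Lemma~\ref{lem:universal-dimension}. The point is that, once the coordinates are substituted, the map from the substituted model to that incidence is a graph embedding, so no projection step is needed. If a projection were needed, part 2 of Lemma~\ref{lem:degree} would preserve the bound anyway.

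It remains to find the values of $e$. In the single-stage compressed family, each output is $\sum_{(j,e')}A_{j,e'}\phi_{j,e'}(z)$. Since each $\phi_{j,e'}$ has label degree at most $j\le s-1$ and each coefficient enters linearly, the total degree is at most $s$. Copied coordinates have degree $1$. So $e=s$. For the two-stage family, we use the same count, composing a second-stage feature of label degree at most $s-1$ with first-stage outputs of degree at most $s$. This gives degree at most $s(s-1)$ from the composition, plus $1$ from the outer coefficients, so $e=1+s(s-1)$. This last step depends on the definition of the two-stage family in Section~\ref{sec:two-stage}, which appears later in the paper, so it has to be checked against that section.
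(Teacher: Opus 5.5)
Your proposal is correct and follows the paper's proof essentially step for step. Both arguments take $b\binom s2$ base-coordinate equations of degree at most $2e$ from \eqref{eq:kernel}, using bilinearity of the algebra multiplication. Both apply cumulative B\'ezout to the full zero set $V$ and identify the closure of the distinctness-restricted set as the union of the components of $V$ meeting the open set. They also use the same degree counts $e=s$ and $e=1+s(s-1)$.

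One small adjustment: for the two-stage family, the open set must also require the $X$-tuples to be distinct in every split factor, not only the labels. Your component argument covers this unchanged, since it works for any ambient open set.
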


\begin{proof}
The expression \eqref{eq:kernel} has degree at most $2e$ after substitution. Multiplication in the fixed degree-$b$ algebra is bilinear in its base coordinates with fixed structure constants. Each clique pair therefore supplies $b$ scalar equations of degree at most $2e$. Cumulative B\'ezout bounds the full zero set $V$ by $(2e)^{b\binom s2}$.

The distinctness requirements form an ambient open set $O$. The closure of $V\cap O$ is precisely the union of the irreducible components of $V$ meeting $O$: the nonempty open intersection with any such component is dense in it. Passing to that union cannot increase cumulative degree.
For the single-stage family, a feature has label degree at most $s-1$ and its coefficient contributes at most one, so every substituted chart coordinate has degree at most $s$. In the two-stage family, each base coordinate of $X$ has degree at most $s$ in the first-stage coefficients and labels. A second-stage feature has degree at most $s-1$ in $X$; its coefficient adds one. The resulting total degree is at most $1+s(s-1)$. Bilinear algebra multiplication does not alter these bounds on the composed coordinate polynomials.
\end{proof}

This bound requires no inverse-witness variables. Such variables remain useful for computation. To express $z_i\ne z_j$, introduce $u_{ij,1},\ldots,u_{ij,m}$ and impose
\begin{equation}\label{eq:inverse-witness}
\sum_{\ell=1}^m u_{ij,\ell}(z_{i,\ell}-z_{j,\ell})=1.
\end{equation}
The equation is solvable exactly when one coordinate difference is nonzero. For a candidate polynomial $Q$ on a projected incidence image, append $vQ-1=0$. The candidate vanishes on that image, and hence its closure, exactly when the enlarged system has no algebraically closed point. By the Nullstellensatz this is equivalent to its ideal containing $1$. A Gr\"obner-basis computation over $\F_2$ decides this and terminates. Extending the coefficient field does not change whether an ideal is the unit ideal. These witness variables are used for ideal membership, never to establish the incidence dimension.

\subsection{Specializing a computed polynomial}\label{sec:adaptive}
\begin{lemma}\label{lem:adaptive}
Let $R\in\F_q[A_0,\ldots,A_{L-1}]$ be nonzero of total degree at most $\delta$, with $q>\delta$. A tuple $a\in\F_q^L$ satisfying $R(a)\ne0$ can be found deterministically by testing at most $\delta+1$ fixed field elements at each coordinate and maintaining a nonzero partially specialized polynomial.
\end{lemma}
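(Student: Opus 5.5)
The plan is the standard coordinate-by-coordinate Schwartz--Zippel/Combinatorial Nullstellensatz descent, made deterministic. Fix once and for all a list $S=\{c_0,\ldots,c_\delta\}\subseteq\F_q$ of $\delta+1$ distinct elements; this is possible because $q>\delta$. For instance, take the first $\delta+1$ elements in the fixed polynomial-basis enumeration of $\F_q$. Put $R_0=R$. We specialize $A_0,A_1,\ldots,A_{L-1}$ in order and maintain the invariant that
\[
R_i(A_i,\ldots,A_{L-1})=R(a_0,\ldots,a_{i-1},A_i,\ldots,A_{L-1})
\]
is a nonzero polynomial of total degree at most $\delta$.

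For the inductive step, write $R_i=\sum_{k=0}^{\delta}g_k(A_{i+1},\ldots,A_{L-1})A_i^k$. Since $R_i\ne0$, some coefficient $g_{k_0}$ is nonzero. Then $c\mapsto g_{k_0}(c)$ is not what we test. Instead, consider $R_i$ as a univariate polynomial in $A_i$ over the field of rational functions $\F_q(A_{i+1},\ldots,A_{L-1})$. It is nonzero of degree at most $\delta$, so it has at most $\delta$ roots in that field, and in particular in $S$. Hence some $c\in S$ gives $R_i(c,A_{i+1},\ldots)\ne0$. Test $c_0,c_1,\ldots$ in order and set $a_i$ to be the first one for which the specialized polynomial is nonzero. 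Specialization does not raise total degree, so the invariant persists. At most $\delta+1$ elements are tested at this coordinate. After $L$ steps, $R_L$ is a nonzero constant, namely $R(a)\ne0$.

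The only point needing care is what "testing" means. A test is a check of whether a specialized polynomial is identically zero as a polynomial. This is done exactly by expanding $R_i(c,\ldots)$ in the monomial basis over $\F_q$ and inspecting its coefficients. It is a finite deterministic computation, because $R$ is given explicitly, with at most $\binom{L+\delta}{\delta}$ monomials. It is never decided by evaluation at random points. No obstacle beyond this bookkeeping is expected. The root-count argument over the rational function field is the key step, and it is elementary. Since $R$ may already have coefficients in $\F_2$ by Lemma~\ref{lem:degree}, the same argument works whenever $S$ lies in any field containing those coefficients.
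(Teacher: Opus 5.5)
Your proof is correct and is essentially the paper's argument: the paper expands the current polynomial in the later variables and observes that a bad value of $A_i$ must be a root of one fixed nonzero coefficient $g(A_i)$ of degree at most $\delta$. This is the same root count you obtain by viewing $R_i$ as a univariate polynomial over $\F_q(A_{i+1},\ldots,A_{L-1})$. The sentence about $g_{k_0}(c)$ is ill-typed, since $g_{k_0}$ does not involve $A_i$, but you never use it and can simply delete it.
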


\begin{proof}
Suppose the first coordinates have been fixed and the remaining polynomial is nonzero. Regard it as a polynomial in the variables after $A_i$, with coefficients that are polynomials in $A_i$. One coefficient $g(A_i)$ is nonzero and has degree at most $\delta$. A value that makes the entire remaining polynomial zero must be a root of $g$. At most $\delta$ values do this, so the first $\delta+1$ distinct field elements contain a choice preserving nonzeroness. Continue. The final nonzero polynomial in no variables is a nonzero constant.
\end{proof}

If $Q(A,Z)=\sum_\gamma R_\gamma(A)Z^\gamma$ is nonzero, choose its first nonzero coefficient polynomial and apply the lemma. The resulting $Q(a,Z)$ is nonzero. For fixed degrees and dimensions, the symbolic coefficient comparisons and field operations have bit cost polynomial in $h$ when $q=2^h$. Once $Q$ is computed, the sufficient field-size condition is $q>\delta$, rather than $h\ge(\delta+1)^L$. To retain at least half the labels by a degree-$\delta$ deletion, $q>2\delta$ suffices.

\section{Polynomial edge deletion at the critical dimension}\label{sec:filter}
At the strict marked inequality, the coefficient--label projection is proper and one can delete a vertex zero set. At equality that projection may be dominant. Projecting instead to the first two labels produces a correspondence that can be separated from the diagonal. Its vanishing polynomial removes one edge of every forbidden clique and still gives a rank certificate on the retained vertices.

\begin{lemma}\label{lem:filter-rank}
Let an ordered graph have a matrix $B$ over a field with nonzero diagonal and $B_{z,w}=0$ on every ordered nonedge $z\prec w$. Let
\[
F(Z,W)=\sum_{i=1}^R f_i(Z)g_i(W).
\]
Retain vertices with $F(z,z)\ne0$, and retain an old edge $z\prec w$ only if $F(z,w)\ne0$. Then the modified graph satisfies $\alpha\le R\rank B$. A polynomial with $\deg_WF\le\delta$ admits $R\le\binom{m+\delta}{m}$.
\end{lemma}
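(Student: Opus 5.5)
The plan is to run the triangular-rank argument of Lemma~\ref{lem:rank} on a modified matrix. Define the entrywise (Hadamard) product $B'_{z,w}=F(z,w)B_{z,w}$ on the retained vertex set. We then check three properties of $B'$ and finish with a rank count.

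\emph{Diagonal.} For a retained vertex $z$, the diagonal entry $B'_{z,z}=F(z,z)B_{z,z}$ is nonzero. This holds because $F(z,z)\ne0$ by the retention rule and $B_{z,z}\ne0$ by hypothesis.

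\emph{Independent sets.} Let $z_1\prec\cdots\prec z_u$ be independent in the modified graph, and fix $i<j$. Either $z_i\prec z_j$ was already a nonedge, so $B_{z_i,z_j}=0$. Or it was an old edge that got deleted, so $F(z_i,z_j)=0$. In both cases $B'_{z_i,z_j}=0$. The principal submatrix of $B'$ on these vertices is therefore lower triangular with nonzero diagonal. As in \eqref{eq:triangular}, this gives $u\le\rank B'$. Only the entries with $i<j$ are used, matching the ordered-nonedge hypothesis.

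\emph{Rank of the product.} Write $F(z,w)=\sum_i f_i(z)g_i(w)$. Then
\[
B'=\sum_{i=1}^R D_{f_i}\,B\,D_{g_i},
\]
where $D_{f}$ is the diagonal matrix of values of $f$ on the retained vertices. Each summand has rank at most $\rank B$, so $\rank B'\le R\rank B$. Restricting to retained vertices only takes a submatrix of $B$, which cannot increase rank. This gives $\alpha\le R\rank B$.

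\emph{Bound on $R$.} Expand $F$ in monomials of $W$. Since $W$ has $m$ variables and $\deg_W F\le\delta$, there are at most $\binom{m+\delta}{m}$ such monomials. Take $g_i$ to be these monomials and $f_i(Z)$ their coefficient polynomials. The variables $Z,W$ are label tuples in $m$ coordinates.

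No step is a real obstacle. The one point needing care is that deleting edges can only create new nonedges, and each new nonedge sits exactly where $F$ vanishes. This is why the filter must be multiplied entrywise into $B$ rather than used to modify the factorization.
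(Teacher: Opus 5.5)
Your proof is correct and follows the paper's argument. The paper also takes the entrywise product $B\circ(F(z,w))$, uses the nonzero diagonal and the vanishing above the diagonal on independent sets, and bounds the rank by expanding $B=\sum_j u_jv_j^T$ into $\sum_{j,i}(u_j\circ f_i)(v_j\circ g_i)^T$. That expansion is your $\sum_i D_{f_i}BD_{g_i}$ in rank-one form, and the monomial count giving $R\le\binom{m+\delta}{m}$ is the same.
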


\begin{proof}
The matrix $M=B\circ(F(z,w))_{z,w}$, where $\circ$ denotes entrywise multiplication, has nonzero diagonal. On an independent set of the modified graph, an entry above the diagonal is zero: either the old pair was a nonedge, making $B_{z,w}=0$, or its edge was deleted, making $F(z,w)=0$. Thus the corresponding submatrix is triangular and nonsingular.

If $B=\sum_{j=1}^{R_B}u_jv_j^T$ with $R_B=\rank B$, then
\[
M=\sum_{j=1}^{R_B}\sum_{i=1}^R(u_j\circ f_i)(v_j\circ g_i)^T.
\]
It has rank at most $R_BR$. Expansion in $W$-monomials gives the binomial bound.
\end{proof}

\begin{theorem}[Diagonal separation]\label{thm:diagonal-separation}
Let $\Ical\subseteq\A^L\times(\A^m)^s$ be a prime-field constructible family of forbidden ordered tuples with pairwise distinct labels. Suppose
\begin{equation}\label{eq:critical-dimension}
\dim\Ical\le L+m.
\end{equation}
There is a polynomial $F(A,Z,W)\in\F_2[A,Z,W]$ that vanishes on the first two labels of every tuple in $\Ical$ and has
\begin{equation}\label{eq:nonzero-diagonal}
D_F(A,Z):=F(A,Z,Z)\not\equiv0.
\end{equation}
It can be computed by terminating deterministic preprocessing from a fixed polynomial incidence model. If positions $1,2$ are an edge of every forbidden pattern and the old graph has an assignment-uniform triangular rank certificate, coefficient specialization, vertex deletion and filtering by $F\ne0$ exclude every such pattern at a fixed multiplicative independence cost. For fixed model parameters, decoding and adjacency are polynomial in $h$ over $\F_{2^h}$.
\end{theorem}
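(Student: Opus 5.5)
Consider the projection $\pi:\A^L\times(\A^m)^s\to\A^L\times\A^m\times\A^m$ that remembers the coefficients and the first two labels, and let $Y=\overline{\pi(\Ical)}$. By \eqref{eq:critical-dimension}, $\dim Y\le L+m$. The diagonal $\Delta=\{(A,Z,Z)\}$ is irreducible of dimension exactly $L+m$. Every tuple of $\Ical$ has $z_1\ne z_2$, so $\pi(\Ical)\cap\Delta=\varnothing$.

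The first step is to show $\Delta\not\subseteq Y$. If $\Delta\subseteq Y$, then $\Delta$ would be an irreducible component of $Y$, since the dimensions agree. The constructible set $\pi(\Ical)$ is dense in $Y$, so it contains a dense open subset of each top-dimensional component. Hence it would meet $\Delta$, a contradiction. Therefore $\Delta\cap Y$ is a proper closed subset of $\Delta\cong\A^{L+m}$.

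Next, produce $F$ vanishing on $Y$ with $F(A,Z,Z)\not\equiv0$. Lemma~\ref{lem:degree}(3) applied to $Y$ alone is not enough, since its vanishing polynomial might vanish on $\Delta$. Instead I would work with the finite-dimensional space $V_\delta$ of polynomials of degree at most $\delta$ vanishing on $Y$, where $\delta$ is chosen from $\cdeg Y$. The bound $\cdeg Y\le\cdeg\overline{\Ical}$ comes from Lemma~\ref{lem:degree}(2), and $\cdeg\overline{\Ical}$ is controlled by Proposition~\ref{prop:substituted-degree}.

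I would then show that $V_\delta$ is not contained in the ideal of $\Delta$. For each component $Z$ of $Y$, we have $Z\not\supseteq\Delta$. Since $\dim Z\le\dim\Delta$ and $Z\neq\Delta$, the variety $Z$ does not contain $\Delta$ (if $\dim Z=L+m$ and $Z\supseteq\Delta$ then $Z=\Delta$). So some generic point $x_0\in\Delta$ lies outside $Z$. By a generic projection as in Lemma~\ref{lem:degree}(3), choose a hypersurface of degree at most $\deg Z$ containing $Z$ and missing $x_0$. The image of $Z$ under a generic linear map to $\A^{\dim Z+1}$ is a hypersurface not containing the image of $x_0$, for generic choice. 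Taking the product over components gives $F$ with $F(x_0)\ne0$, so $D_F\not\equiv0$.

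Descent to $\F_2$ follows the argument of Lemma~\ref{lem:degree}(4). The subspace of $V_\delta$ consisting of polynomials whose restriction to the diagonal vanishes is Frobenius-stable, and so is $V_\delta$. Both therefore have $\F_2$-rational bases by the row-echelon argument. A rational basis element of $V_\delta$ outside the smaller subspace exists because the subspace is proper.

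Computability is settled by enumerating degrees up to the bound. For each candidate we test membership of $F$ in the ideal of $Y$ via the inverse-witness and Gröbner-basis procedure described after Proposition~\ref{prop:substituted-degree}, and test $F(A,Z,Z)\ne0$ directly. Termination follows from the existence argument above.

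For the filtering consequence, the steps are as follows.
\begin{itemize}
\item Write $D_F(A,Z)=\sum_\gamma R_\gamma(A)Z^\gamma$ and specialize $A=a$ by Lemma~\ref{lem:adaptive} so that $D_F(a,Z)\not\equiv0$.
\item Delete the labels with $D_F(a,z)=0$. Over $\F_{2^h}$ with $2^h>2\deg D_F$, at least half the labels survive by the Schwarz--Zippel bound, and the dense decoder from the appendix indexes them.
\item Delete every edge $z\prec w$ with $F(a,z,w)=0$.
\end{itemize}
Any surviving forbidden pattern gives a tuple in $\Ical$ at coefficient $a$. Then $F(a,z_1,z_2)=0$, so the edge between positions $1$ and $2$ was removed and the pattern is destroyed. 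Lemma~\ref{lem:filter-rank} shows the independence bound grows only by the factor $\binom{m+\delta}{m}$, which is fixed. Decoding and adjacency cost evaluations of $K$ and of the fixed polynomial $F(a,\cdot,\cdot)$, both polynomial in $h$.

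The main obstacle is the separation step: guaranteeing a vanishing polynomial of controlled degree that is nonzero on the diagonal, rather than just nonzero, and verifying that the rationality argument preserves this.
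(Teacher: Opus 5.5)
Your proposal is correct and follows the paper's skeleton closely. You use the same pair projection $Y=\overline{\pi(\Ical)}$ and the same argument that the diagonal cannot lie in $Y$: a component containing it would equal it, and density of the image then forces a meeting point. The row-echelon descent to $\F_2$, the enumeration with the Nullstellensatz test, and the specialization--decoding--filtering via Lemma~\ref{lem:filter-rank} are also the same.

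The one genuine difference is how you produce the separating polynomial. The paper only observes that $Y=V(I(Y))$ and $D\not\subseteq Y$ force some $f\in I(Y)$ with $f|_D\neq0$, of unspecified degree. It then applies descent to the polynomials in $I(Y)$ of degree at most $\deg f$, and explicitly says its search yields no prescribed degree bound.

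You instead build $F$ component by component from generic projections, which is a Heintz-type argument. It gives $\deg F\le\cdeg Y\le\cdeg\overline{\Ical}$. That is more than the theorem needs: it makes the search bounded a priori and the multiplier $\binom{m+\delta}{m}$ explicit.

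The price is a step you assert without proof. A dimension count gives $\lambda(x_0)\notin\lambda(Z)$ for generic $\lambda$: the cone joining $x_0$ to $Z$ has dimension at most $\dim Z+1$, and a generic kernel of dimension $N-\dim Z-1$ meets it only at the origin. But you need the stronger $\lambda(x_0)\notin\overline{\lambda(Z)}$. This follows because a generic projection to $\A^{\dim Z+1}$ is finite on $Z$: its center at infinity misses $\overline{Z}\cap H_\infty$, so $\lambda(Z)$ is already closed. With that supplied, your argument is complete.

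Finally, Proposition~\ref{prop:substituted-degree} covers only the flag models. For an arbitrary prime-field constructible $\Ical$ as in the statement, you only know $\cdeg\overline{\Ical}<\infty$. That suffices for existence, but not for an explicit numerical bound.
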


\begin{proof}
Write $S$ for the actual image of $\Ical$ under $(A,z_1,\ldots,z_s)\mapsto(A,z_1,z_2)$, and let $Y=\overline S$. Let
\[
D=\{(A,Z,W):Z=W\}\subseteq\A^{L+2m}.
\]
The diagonal is irreducible of dimension $L+m$, whereas $\dim Y\le L+m$. Although $S$ is disjoint from $D$, its closure may meet $D$. We must show that it cannot contain the whole diagonal.

First, $D$ cannot be an irreducible component of $Y$. If it were, write the other components as $Y_1,\ldots,Y_u$. Since none contains $D$, the set $D\setminus\bigcup_iY_i$ is nonempty. It equals $Y\setminus\bigcup_iY_i$, so is open in $Y$. It is disjoint from $S$, contradicting the density of $S$ in $Y$. Next, if $D\subseteq Y$, irreducibility places it in a single component $Y_i$. Since $\dim Y_i\le\dim D$, the inclusion forces $D=Y_i$, which has just been excluded. Hence $D\nsubseteq Y$.

Choose a polynomial $f\in I(Y)$ whose restriction to $D$ is nonzero. The space of polynomials in $I(Y)$ of degree at most $\deg f$ is invariant under coefficient Frobenius and its inverse, because the model and the projection are defined over $\F_2$. Lemma~\ref{lem:degree} gives a basis over $\F_2$ by row-echelon descent. If every basis member vanished on $D$, so would $f$. At least one therefore supplies the required $F\in\F_2[A,Z,W]$.

To compute it, enumerate total degrees and then binary coefficient vectors in a fixed order. Reject a candidate whose diagonal substitution is the zero polynomial. Test vanishing on $S$ by adding $vF(A,z_1,z_2)-1=0$ to an inverse-witness model for $\Ical$ and checking whether the ideal contains $1$, as in Section~\ref{sec:degree-model}. The preceding existence proof ensures that the enumeration terminates. This procedure gives a finite degree depending only on the fixed model. It does not give a prescribed bound on that degree; the quantified construction in Appendix~\ref{sec:fibers} uses a different argument.

Expand $D_F(A,Z)$ in $Z$ and choose a nonzero coefficient polynomial. Lemma~\ref{lem:adaptive} specializes $A=a$ while keeping that coefficient nonzero. Thus $D_F(a,Z)$ is a nonzero polynomial. If $\delta\ge1$ bounds the computed filter's total degree, Lemma~\ref{lem:decoder} retains exactly $q^{m-1}(q-\delta)$ labels with $D_F(a,z)\ne0$, for sufficiently large $q$.

On these labels retain an old edge $z\prec w$ only when $F(a,z,w)\ne0$. Any remaining forbidden tuple would belong to $\Ical$ and hence have $F(a,z_1,z_2)=0$. Its distinguished edge would have been deleted. The retained vertices have nonzero diagonal filter values, so Lemma~\ref{lem:filter-rank} applies and gives the fixed independence multiplier. Computing the fixed polynomials, decoding labels and evaluating the old incidence test have bit cost polynomial in $h$ once the model is fixed.
\end{proof}

The geometric condition actually used is $D\nsubseteq Y$. It suffices even without \eqref{eq:critical-dimension}. Conversely, properness of $Y$ alone is insufficient: its ideal might vanish identically on $D$. A weaker inequality that merely makes the pair projection proper does not justify the critical endpoint or an endpoint beyond it.

\section{Restriction of scalars and the Ramsey endpoint}\label{sec:scalars}
Fix $b\ge1$ and an irreducible $g_b(U)\in\F_2[U]$ of degree $b$. Put $E_0=\F_2[U]/(g_b)$ and, for $q=2^h$, use
\begin{equation}\label{eq:fixed-algebra}
E_q=E_0\otimes_{\F_2}\F_q=\F_q[U]/(g_b(U)).
\end{equation}
Multiplication in the basis $1,U,\ldots,U^{b-1}$ is bilinear, with fixed binary structure constants.

\begin{lemma}\label{lem:scalars}
After base change to $\overline{\F}_2$, the algebra \eqref{eq:fixed-algebra} is the product of $b$ copies of $\overline{\F}_2$. Over $\F_{2^h}$ it is the field $\F_{2^{bh}}$ when $\gcd(b,h)=1$. The restriction of scalars of the chart has dimension $b(2d-3)$, and its ordered clique configurations have codimension $b\kappa_{s,d}$.
\end{lemma}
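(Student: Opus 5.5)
The plan has three parts, taken in order: the splitting of $E_0\otimes\overline{\F}_2$, the field statement over $\F_{2^h}$, and then the dimension counts. For the first part, since $g_b$ is irreducible over the perfect field $\F_2$, it is separable. Its roots are the $b$ distinct conjugates $\theta,\theta^2,\ldots,\theta^{2^{b-1}}$ in $\F_{2^b}\subseteq\overline{\F}_2$. The Chinese remainder theorem then gives
\[
\overline{\F}_2[U]/(g_b)\cong\prod_{i=0}^{b-1}\overline{\F}_2[U]/(U-\theta^{2^i})\cong\overline{\F}_2^{\,b}.
\]
The isomorphism is $U\mapsto(\theta^{2^i})_i$. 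In the basis $1,U,\ldots,U^{b-1}$ this is an invertible Vandermonde linear change of base coordinates.

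For the second part, over $\F_{2^h}$ the factorization of $g_b$ follows the orbits of the $2^h$-power Frobenius on its roots. The map $\theta\mapsto\theta^{2^h}$ generates $\mathrm{Gal}(\F_{2^b}/\F_2)\cong\mathbb Z/b$ exactly when $\gcd(b,h)=1$. In that case the orbit is a single one, $g_b$ stays irreducible over $\F_{2^h}$, and $E_q$ is a field of degree $b$ over $\F_{2^h}$, namely $\F_{2^{bh}}$.

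For the dimension counts, the Weil restriction of $\A^{2d-3}$ along $E_0/\F_2$ is $\A^{b(2d-3)}$, given by the base coordinates of each chart coordinate. After base change to $\overline{\F}_2$, the linear isomorphism above identifies it with $(\A^{2d-3})^b$. The identification respects multiplication componentwise, because the isomorphism is an algebra map. The kernel $K(z,w)$ of \eqref{eq:kernel} is a polynomial with coefficients in $\F_2$. The $b$ base-coordinate equations $K=0$ therefore become, factor by factor, the $b$ independent equations $K^{(i)}=0$ on the $i$-th copy. Hence the restricted ordered clique configuration space over $\overline{\F}_2$ is isomorphic to $\Ccal_{s,d}^{\,b}$, reordered as $b$ factors of ordered $s$-tuples, inside $\bigl((\A^{2d-3})^s\bigr)^b$. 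The same holds for the configuration cut out in the chart. Theorem~\ref{thm:exact-dimension} gives dimension $b\dim\Ccal_{s,d}$, because the dimension of a product is the sum of the dimensions. The codimension in $b\,s(2d-3)$ is therefore $b\kappa_{s,d}$.

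The main obstacle is the bookkeeping in the last step. One must check that restriction of scalars of the configuration, meaning the base-coordinate equations, corresponds exactly to the product under the $\overline{\F}_2$-splitting. That requires the splitting map to be linear in base coordinates and multiplicative, so that polynomial identities over $\F_2$ transport. One should also note that dimension is insensitive to the base change from $\F_2$-models to $\overline{\F}_2$. I would state these compatibilities explicitly and treat the rest as routine.
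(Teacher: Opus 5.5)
Your proposal is correct and takes essentially the paper's route: separability plus the Chinese remainder theorem for the splitting, the orbit of a root under $x\mapsto x^{2^h}$ for irreducibility over $\F_{2^h}$, and transport of the $\F_2$-defined incidence equations to a $b$-fold product so that dimensions and codimensions add. Your remarks on the invertible Vandermonde change of base coordinates and on $K$ having coefficients in $\F_2$ only make explicit what the paper's shorter proof leaves implicit.
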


\begin{proof}
The polynomial $g_b$ has $b$ distinct roots, since finite fields are perfect. The Chinese remainder theorem gives the product over those roots after algebraic closure. Polynomial equations in the algebra become one independent copy in each factor, giving a product of charts and a product of configuration varieties. Dimensions and codimensions add.

A root of $g_b$ has binary Frobenius orbit of length $b$. Under $x\mapsto x^{2^h}$ its orbit has length $b/\gcd(b,h)$. Thus when $\gcd(b,h)=1$, $g_b$ remains irreducible over $\F_{2^h}$ and the quotient is the stated field.
\end{proof}

Copy $m$ base-field label coordinates into $m$ of the $b(d-1)$ base coordinates of $p$. Parameterize the remaining $c=b(2d-3)-m$ chart coordinates independently by compressed polynomials in the labels. Here $L=cM(m,s)$ counts all independent coefficient slots in the base-field description. The copied coordinates make the point map injective over the actual field $E_q$. Lemma~\ref{lem:universal-dimension} gives
\begin{equation}\label{eq:scalar-incidence}
\dim\Ical\le L+ms-b\kappa_{s,d}.
\end{equation}
Repeated points in the split geometric factors are permitted in the configuration count, so this step does not assume that distinct base labels have distinct projections to every factor.

\begin{theorem}[Critical flag construction]\label{thm:critical}
Fix $s,d\ge3$ and $b,m\ge1$ with
\begin{equation}\label{eq:endpoint-condition}
m\le b(d-1),\qquad m(s-1)\le b\kappa_{s,d}.
\end{equation}
After finite deterministic preprocessing, there are graphs for every sufficiently large $h$ with $\gcd(b,h)=1$ such that
\begin{equation}\label{eq:critical-graph}
|V|=q^{m-1}(q-\delta),\qquad\omega<s,\qquad\alpha\le C d^{bh},\qquad q=2^h.
\end{equation}
The constants $C,\delta$ depend only on the fixed parameters. Initialization, decoding and adjacency have deterministic bit cost polynomial in $h$.
\end{theorem}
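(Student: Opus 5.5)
The plan is to assemble the critical construction from the pieces already in place: the restricted-scalar flag graph, the incidence bound \eqref{eq:scalar-incidence}, diagonal separation (Theorem~\ref{thm:diagonal-separation}), and the filtered rank bound (Lemma~\ref{lem:filter-rank}).

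First I set up the model. Take the fixed algebra \eqref{eq:fixed-algebra} with $\gcd(b,h)=1$, so that $E_q=\F_{2^{bh}}$ by Lemma~\ref{lem:scalars}. Use the chart \eqref{eq:chart} over $E_q$ and copy $m$ base-field label coordinates into the $b(d-1)$ base coordinates of $p$. This is possible because $m\le b(d-1)$, the first half of \eqref{eq:endpoint-condition}. The other $c=b(2d-3)-m$ base coordinates are parameterized by independent blocks of the compressed features \eqref{eq:compressed-features}. Labels are $z\in\F_q^m$, and adjacency is \eqref{eq:adjacency} evaluated in $E_q$ under a fixed total order on labels.

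Lemma~\ref{lem:rank}, applied over $\F_{2^{bh}}$ with $H=bh$, gives $\alpha\le d^{bh}$ for every coefficient assignment and every order. It also supplies the assignment-uniform triangular certificate $B$ with unit diagonal. By Theorem~\ref{thm:exact-dimension} and Lemma~\ref{lem:scalars}, the ordered clique configurations have codimension $b\kappa_{s,d}$. The incidence $\Ical$ of coefficients with ordered $s$-tuples of distinct labels forming a clique configuration then obeys \eqref{eq:scalar-incidence}. The second half of \eqref{eq:endpoint-condition} gives $ms-b\kappa_{s,d}\le m$, so $\dim\Ical\le L+m$. This is exactly hypothesis \eqref{eq:critical-dimension}. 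The incidence is defined over $\F_2$, because the structure constants and the chart equations are binary.

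Next I apply Theorem~\ref{thm:diagonal-separation}. Positions $1,2$ form an edge of every ordered clique, since $P_1\subseteq H_2$. Preprocessing, which is finite and independent of $h$, yields $F\in\F_2[A,Z,W]$ vanishing on first-label pairs of $\Ical$, together with $D_F\not\equiv0$. Let $\delta\ge1$ bound its total degree. Every actual clique of size $s$ in the graph, for any specialization $a\in\F_q^L$, lifts to a point of $\Ical$: the copied labels are distinct, and the evaluations hold because they are just the specialized polynomials. So once $q>\delta$, Lemma~\ref{lem:adaptive} specializes $a$ so that some coefficient of $D_F(a,Z)$ is nonzero. I then delete the labels where $D_F(a,z)=0$ and filter edges by $F(a,z,w)\ne0$. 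After this, every surviving $s$-clique would have its first edge deleted, so $\omega<s$.

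Lemma~\ref{lem:filter-rank} gives $\alpha\le\binom{m+\delta}{m}d^{bh}$, so $C=\binom{m+\delta}{m}$. For the exact count $|V|=q^{m-1}(q-\delta)$ I cite the decoder Lemma~\ref{lem:decoder}, as the proof of Theorem~\ref{thm:diagonal-separation} does. That lemma presumably arranges, after a linear change of label coordinates and possibly padding $\delta$, that $D_F(a,\cdot)$ is monic of degree $\delta$ in one label coordinate on a product-structured index set; I take that from the appendix. The complexity claims follow because everything that depends on $h$ consists of field initialization by Shoup, the at most $L(\delta+1)$ trial evaluations of fixed polynomials, decoding, and evaluating $K$ and $F$. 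All of these are polynomial in $h$.

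The main obstacle is checking that the dimension input really applies to actual cliques over $E_q$ rather than to generic ones. Two facts carry this. Distinct base labels give distinct points through the copied coordinates, even though their projections to individual split factors may coincide; Lemma~\ref{lem:universal-dimension} needs only distinctness of the labels themselves. And Theorem~\ref{thm:exact-dimension} counts degenerate configurations in each factor. A second delicate point is the exact vertex count, which rests on the decoder lemma rather than on a Schwartz--Zippel estimate.
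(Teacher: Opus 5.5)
Your proposal is correct and follows essentially the same route as the paper. The endpoint condition turns \eqref{eq:scalar-incidence} into $\dim\Ical\le L+m$. Theorem~\ref{thm:diagonal-separation} with Lemma~\ref{lem:adaptive} then supplies and specializes the filter, and Lemma~\ref{lem:decoder} gives the exact count $q^{m-1}(q-\delta)$. Finally, Lemma~\ref{lem:rank} over $E_q=\F_{2^{bh}}$, combined with Lemma~\ref{lem:filter-rank} for the $\F_q$-valued filter viewed in $E_q$, gives $C=\binom{m+\delta}{m}$. The one detail the paper fixes that you leave open is a concrete label order, namely lexicographic on binary base-coordinate encodings, which the polynomial-time adjacency claim needs.
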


\begin{proof}
The scalar family has $\dim\Ical\le L+ms-b\kappa_{s,d}\le L+m$. Describe its distinct-label incidence by the scalar equations and witnesses. Theorem~\ref{thm:diagonal-separation} computes a filter $F$ of some fixed total degree $\delta\ge1$ and specializes its coefficients so that $F(a,Z,Z)$ remains nonzero. Lemma~\ref{lem:decoder} gives exactly $q^{m-1}(q-\delta)$ vertices in that diagonal complement.

Use lexicographic order on their binary base-coordinate encodings. Join $z\prec w$ precisely when $H(w)P(z)=0$ in $E_q$ and $F(a,z,w)\ne0$ in $\F_q$. Every ordered $s$-clique would lie in the universal incidence and lose its first edge. Thus $\omega<s$.

The coprimality assumption makes $E_q=\F_{2^{bh}}$. Apply Lemma~\ref{lem:rank} over this field to get a certificate of rank at most $d^{bh}$. The scalar-valued filter is also an $E_q$-valued filter, and Lemma~\ref{lem:filter-rank} bounds its multiplier by $\binom{m+\delta}{m}$. This gives \eqref{eq:critical-graph} with a fixed $C$. The binary algebra structure is fixed, so arithmetic in $E_q$ reduces to a fixed number of base-field operations. Field initialization, specialization, decoding and the displayed adjacency rule therefore have polynomial bit cost in $h$.
\end{proof}

\begin{lemma}\label{lem:extraction}
Suppose that, for every sufficiently large $h$ in a fixed arithmetic progression of step $g$, there are $K_s$-free graphs with
\[
|V_h|\ge c_0 2^{mh},\qquad\alpha(G_h)\le C\theta^{bh},\qquad\theta>1,
\]
where all parameters and constants are fixed. Then
\begin{equation}\label{eq:bounded-gap-extraction}
R(s,t)\ge\Omega\bigl(t^{m/(b\log_2\theta)}\bigr).
\end{equation}
\end{lemma}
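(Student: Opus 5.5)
Given large $t$, the plan is to pick the largest admissible $h$ with $C\theta^{bh}<t$, take the graph $G_h$, and read off the bound. Since $G_h$ is $K_s$-free and $\alpha(G_h)\le C\theta^{bh}<t$, it witnesses $R(s,t)>|V_h|\ge c_0 2^{mh}$. It remains to express $2^{mh}$ in terms of $t$.

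First, let $h_0$ be the availability threshold and let the progression be $h\equiv h_1\pmod g$. For $t$ larger than a constant depending only on the fixed parameters, let $h$ be the largest element of the progression with $h\ge h_0$ and $C\theta^{bh}<t$. This $h$ exists once $t>C\theta^{bh_0'}$, where $h_0'$ is the first admissible value. Maximality means that $h+g$ fails, so $C\theta^{b(h+g)}\ge t$. Hence
\[
\theta^{bh}\ge \frac{t}{C\theta^{bg}},
\]
and the gap costs only a constant factor, because $g$, $b$ and $\theta$ are fixed.

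Second, the identity
\[
2^{mh}=\bigl(\theta^{bh}\bigr)^{m/(b\log_2\theta)},
\]
which holds since $\theta^{bh}=2^{bh\log_2\theta}$, gives
\[
R(s,t)>c_0 2^{mh}\ge c_0\,(C\theta^{bg})^{-m/(b\log_2\theta)}\,t^{m/(b\log_2\theta)}.
\]
This is \eqref{eq:bounded-gap-extraction} with a positive prefactor and a threshold on $t$, both independent of $t$. For the finitely many small $t$ below the threshold, the $\Omega$ notation absorbs the constant. If needed, one can also use the trivial bound $R(s,t)\ge t$ there, or monotonicity of $R$ in $t$.

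There is no real obstacle; the only step needing care is the bounded-gap rounding. Since only every $g$-th extension degree is available (coming from $\gcd(b,h)=1$ in Theorem~\ref{thm:critical}), one must check that the loss is a factor of at most $\theta^{bg}$ in the independence scale. That factor becomes the constant $(C\theta^{bg})^{-m/(b\log_2\theta)}$. Because $\theta>1$ and all parameters are fixed, this constant is positive and finite.
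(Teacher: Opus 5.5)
Your proposal is correct and is essentially the paper's proof: choose the largest admissible $h$ in the progression with $C\theta^{bh}<t$ (the paper writes $C\theta^{bh}\le t-1$), use its maximality so that the gap costs only the fixed factor $\theta^{bg}$, and rewrite $2^{mh}=(\theta^{bh})^{m/(b\log_2\theta)}$. Your prefactor $c_0(C\theta^{bg})^{-m/(b\log_2\theta)}=c_0\,2^{-mg}C^{-m/(b\log_2\theta)}$ matches the paper's prefactor.
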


\begin{proof}
Choose the largest admissible $h$ with $C\theta^{bh}\le t-1$. For sufficiently large $t$ it exists in the valid range, and the gap gives
\[
h>\frac{\log_2((t-1)/C)}{b\log_2\theta}-g.
\]
Hence $|V_h|\ge c_0 2^{-mg}((t-1)/C)^{m/(b\log_2\theta)}$, while its independence number is less than $t$. Every prefactor is constant in $t$.
\end{proof}

Write $\mu_{s,d}=\kappa_{s,d}/(s-1)=m/b$ in lowest terms. We have $\mu_{s,d}\le d-1$, so \eqref{eq:endpoint-condition} holds with equality in its second inequality. Restrict to $h\equiv1\pmod b$. Theorem~\ref{thm:critical} and Lemma~\ref{lem:extraction} give
\begin{equation}\label{eq:critical-exponent}
R(s,t)\ge\Omega_{s,d}\bigl(t^{\mu_{s,d}/\log_2d}\bigr).
\end{equation}
For $d\ge s+1$, the codimension is constant and the denominator increases. The choice $d=s$ has the same codimension with a smaller denominator. Thus the optimized exponent is
\begin{equation}\label{eq:Ecrit}
E_{\rm crit}(s)=\max_{3\le d\le s}
\frac{(d-1)(2s-d)}{2(s-1)\log_2d}.
\end{equation}
Taking $d=s$ gives $s/(2\log_2s)$ with no rounding loss. For $d\le s+1$, the original marked condition permits only integers $m<\mu_{s,d}$. At a fixed $d$ its largest admissible label dimension is the largest such integer, when positive. The critical construction and restriction of scalars replace it by the exact rational endpoint.

\section{Short Frobenius relations and the rank exponent}\label{sec:words}
The rank bound can be smaller than $d^H$ when point coordinates satisfy relations compatible with binary Frobenius. Let
\[
P=(1,X_1,\ldots,X_a,Y_1,\ldots,Y_r),\qquad d=1+a+r,
\]
where $a\ge1$, and put $X_0=1$. Suppose each $Y_i$ is a linear combination, over the flag field, of the functions
\begin{equation}\label{eq:short-relation}
\prod_{j=0}^{k-1}X_{\sigma(j)}^{2^j},\qquad \sigma:\{0,\ldots,k-1\}\longrightarrow\{0,\ldots,a\},\qquad k\ge2.
\end{equation}
The coefficients of these relations may be arbitrary. The assertion below is uniform over them.

\begin{lemma}[Word reduction]\label{lem:words}
If $r>0$, let $\rho=\rho(d,r,k)$ be the largest positive root of
\begin{equation}\label{eq:rho}
x^k-dx^{k-1}+r=0.
\end{equation}
Then $d-1<\rho<d$, and the graph's independence number is at most $d^k\rho^H$. If $r=0$, the bound is $d^H$ and we put $\rho=d$.
\end{lemma}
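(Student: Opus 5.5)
The plan is to bound $\dim\Wcal_H(P)$ and then invoke Lemma~\ref{lem:rank}, which gives $\alpha\le\dim\Wcal_H(P)$ for every assignment and every order. Index the coordinates of $P$ by the alphabet $\{X_0,\ldots,X_a,Y_1,\ldots,Y_r\}$ of $d$ letters, with $X_0=1$. A spanning function $\prod_\ell P_{\sigma(\ell)}^{2^\ell}$ of \eqref{eq:feature-space} is then a word of length $H$ whose letter at position $\ell$ is raised to the power $2^\ell$. The case $r=0$ is Lemma~\ref{lem:rank} itself, so assume $r>0$.

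\textbf{Rewriting rule.} A factor $Y_iX_0^{k-1}$ occupying positions $\ell,\ldots,\ell+k-1$ contributes $Y_i^{2^\ell}$, because $X_0=1$. Apply the $2^\ell$-th Frobenius power to the relation \eqref{eq:short-relation}. In characteristic two this turns $Y_i^{2^\ell}$ into a linear combination, with Frobenius-twisted coefficients, of $\prod_{j=0}^{k-1}X_{\sigma(j)}^{2^{\ell+j}}$. Each such product is exactly a pure $X$-word placed in the same $k$ positions, and it does not interact with the remaining letters. So every word containing some factor $Y_iX_0^{k-1}$ lies in the span of words with strictly fewer $Y$ letters. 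By induction on the number of $Y$'s, $\Wcal_H(P)$ is spanned by the words of length $H$ avoiding the $r$ factors $Y_iX_0^{k-1}$. Since the relation coefficients are arbitrary, this bound is uniform over them.

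\textbf{Counting.} Each forbidden factor begins with a $Y$ and continues only with $X_0$, so no forbidden factor overlaps itself or another one nontrivially. I would count the avoiding words with a transfer automaton. Its states record the length $j\in\{0,\ldots,k-1\}$ of the current suffix of the form $Y_\ast X_0^{j}$, with $j=0$ also covering the case of no such suffix. The transfer matrix is nonnegative and irreducible. Equivalently, the Goulden--Jackson cluster method for non-overlapping patterns gives the exact recurrence $f(H)=d\,f(H-1)-r\,f(H-k)$ for $H\ge k$, whose characteristic polynomial is \eqref{eq:rho}. The Perron root of the automaton is then the largest positive root $\rho$.

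\textbf{Root location.} Write $p(x)=x^{k-1}(x-d)+r$. Then $p(d)=r>0$, and $p(d-1)=r-(d-1)^{k-1}<0$ because $r\le d-2$. Moreover $p$ is increasing on $[d,\infty)$, so there is no root at or beyond $d$. Hence the largest positive root satisfies $d-1<\rho<d$.

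\textbf{Expected obstacle.} The main difficulty is the explicit prefactor $d^k$, as opposed to an unspecified constant. I would take a positive Perron eigenvector $v$ and bound the ratio of its largest to smallest entry by $d^{k-1}$: the entries differ by at most $k-1$ applications of the transfer rule, each losing a factor at most $d$. The number of words is at most $\rho^H$ times this ratio times the number of initial states, and this is at most $d^k\rho^H$. For the short range $H<k$ the bound is immediate, since $d^H<d^k\le d^k\rho^H$. Checking the eigenvector ratio carefully is where I expect the computation to need care.
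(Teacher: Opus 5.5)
Your proposal is correct and is essentially the paper's proof. It uses the same Frobenius-twisted substitution eliminating the blocks $Y_iX_0^{k-1}$, the same non-overlap recurrence $u_n=du_{n-1}-ru_{n-k}$, the same sign change of $x^k-dx^{k-1}+r$ between $d-1$ and $d$, and the same Perron-vector comparison for the prefactor. Two of your details are slightly sharper: your monotonicity on $[d,\infty)$ holds for every $k$, whereas the paper's claim that the derivative is positive on $[d-1,\infty)$ needs $d>k$; and your path-length bound $k-1$ even gives $d^{k-1}\rho^H$.

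Only the automaton needs tidying. Use a safe state plus states for the suffixes $Y_\ast X_0^{j}$ with $0\le j\le k-2$: as you wrote it, $j=k-1$ is the forbidden block itself, and merging `just read $Y$' with `no active suffix' makes the $X_0$-transition ill-defined. Then count from the single safe initial state, so no factor for the number of initial states enters.
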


\begin{proof}
Represent a feature in $\Wcal_H$ by a word of length $H$ on the $d$ point-coordinate letters, with position $j$ contributing the $2^j$-th power of its letter. Whenever $Y_i$ is followed by $k-1$ constant letters, substitute \eqref{eq:short-relation}, raised to the power for that position. Each resulting word contains one fewer $Y$ letter. Repeated substitution terminates, leaving words that avoid every block $Y_i0^{k-1}$. Thus $\dim\Wcal_H$ is bounded by their number $u_H$.

No two forbidden blocks overlap: all letters after the initial $Y_i$ are constant letters. Appending a letter therefore gives
\[
u_n=d^n\quad(0\le n<k),\qquad u_n=du_{n-1}-ru_{n-k}\quad(n\ge k),
\]
and $\sum_{n\ge0}u_nz^n=(1-dz+rz^k)^{-1}$. One may also count with an automaton. Its states are a safe state and $k-1$ states recording a terminal $Y$ followed by $j$ zero letters, $0\le j\le k-2$. Any of the $a$ letters $X_i$ goes to the safe state; any of the $r$ letters $Y_i$ goes to the first active state. A zero stays safe or advances an active state, except that the last such transition is forbidden. The transition matrix is primitive, since every state reaches the safe state, every state is reachable from it, and the safe state has a loop. Its Perron root is the dominant growth root in \eqref{eq:rho}.

The polynomial in \eqref{eq:rho} is negative at $d-1$ and positive at $d$: here $r\le d-2<(d-1)^{k-1}$. Its derivative is positive for $x\ge d-1$, so it has exactly one root there. Every row sum of the transition matrix is either $d$ or $d-1$, and both occur. Irreducibility therefore places its Perron root strictly between these values. The unique root in that interval is the Perron root. For an explicit prefactor, let $v$ be a positive right Perron vector. Any two states are joined by a path of length at most $k$, each with positive integral weight. Hence $v_i/v_j\le\rho^k\le d^k$. Comparing the all-ones vector with $v$ gives $u_H\le d^k\rho^H$. Lemma~\ref{lem:rank} completes the proof.
\end{proof}

The compressed features in $a$ variables satisfy \eqref{eq:short-relation} when $k=\lceil\log_2s\rceil$. Indeed, for $j<s\le2^k$, expand $\ell_T(X)^j$ according to the binary digits of $j$. Every occupied digit contributes one coordinate raised to the corresponding binary power; unoccupied digits contribute $X_0$. Taking a coefficient of $T$ preserves this form. This gives short relations for polynomial outputs in $X$, although a direct interpolation family in the original labels need not have the required relation to its own point coordinates.

\subsection{A two-stage interpolation family}\label{sec:two-stage}
Fix $3\le d\le s$, write $\mu_{s,d}=m/b$ in lowest terms, and set
\begin{equation}\label{eq:two-stage-parameters}
a=\lceil\mu_{s,d}\rceil,\qquad r=d-1-a,\qquad k=\lceil\log_2s\rceil.
\end{equation}
First copy the $m$ base coordinates of the label into $m$ of the $ba$ base coordinates of $X\in E_q^a$. Parameterize the other $ba-m$ coordinates by independent compressed polynomials in the labels. Next parameterize all $r$ coordinates $Y_i$ and all $d-2$ slopes by independent $E_q$-valued compressed polynomials in $X$. The coefficient dimension is
\begin{equation}\label{eq:two-stage-L}
L=(ba-m)M(m,s)+b(r+d-2)M(a,s).
\end{equation}
The second-stage $Y_i$ obey \eqref{eq:short-relation}. The first-stage functions may have higher Frobenius complexity; the word count uses only the relation of $Y$ to $X$.

\begin{proposition}\label{prop:two-stage}
The universal ordered-clique incidence for this family, restricted to pairwise distinct $X$-tuples in each split geometric factor, has dimension at most
\[
L+ms-b\kappa_{s,d}\le L+m.
\]
It contains every actual ordered clique on distinct labels whenever $E_q$ is a field. Consequently there are $K_s$-free graphs on $q^{m-1}(q-\delta)$ vertices with
\[
\alpha\le C d^k\rho^{bh},
\]
for fixed $C,\delta$ and every sufficiently large $h\equiv1\pmod b$. They have deterministic preprocessing and polynomial-time local access for fixed $s,d$.
\end{proposition}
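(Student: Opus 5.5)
The plan mirrors the proof of Theorem~\ref{thm:critical}, with two changes. The chart is replaced by the two-stage parameterization, and the rank certificate of Lemma~\ref{lem:rank} is replaced by the word count of Lemma~\ref{lem:words}.

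\textbf{Step 1: the dimension bound.} After base change to $\overline{\F}_2$, Lemma~\ref{lem:scalars} splits $E_q$ into $b$ factors. In each factor the ordered clique configurations in the point--slope chart have codimension $\kappa_{s,d}$, so the product has codimension $b\kappa_{s,d}$. I then view the two-stage family as a composite interpolation and count in two fibrations.
\begin{itemize}
\item \emph{Second stage.} Fix the first-stage coefficients and labels, hence the $X$-tuples. The second-stage coefficients are blocks of compressed features in $X$, one copy per split factor. Restricting to tuples whose $X$-values are pairwise distinct in each factor, Lemma~\ref{lem:interpolation} applies factorwise. It gives rank $s$ per output per factor, exactly as in Lemma~\ref{lem:universal-dimension} with $a$ copied coordinates. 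The second-stage fiber therefore has dimension $b(r+d-2)(M(a,s)-s)$ over any prescribed configuration.
\item \emph{First stage.} Over the configuration, I apply Lemma~\ref{lem:universal-dimension} to the map from $m$ base labels to the $ba$ base coordinates of $X$. This contributes $(ba-m)(M(m,s)-s)$ coefficient dimensions over prescribed values.
\end{itemize}
Summing the two contributions with $\dim\Ccal\le b(2d-3)s-b\kappa_{s,d}$ gives $\dim\Ical\le L+ms-b\kappa_{s,d}$. This is at most $L+m$ because $m(s-1)=b\kappa_{s,d}$. Each stage is covered by finitely many minor charts, on which the incidence is a product, so the dimensions add.

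\textbf{Step 2: actual cliques lie in the incidence.} When $E_q$ is a field, distinct labels give distinct $X$, since the copied coordinates are injective. Under the geometric embedding $E_q\hookrightarrow\prod\overline{\F}_2$, the coordinates in each factor are Galois conjugates. Distinct field elements therefore stay distinct in every factor, so actual cliques satisfy the distinctness condition that was imposed.

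\textbf{Step 3: diagonal separation.} I apply Theorem~\ref{thm:diagonal-separation} to the pair projection. Proposition~\ref{prop:substituted-degree} with $e=1+s(s-1)$ supplies a finite model. Using the witness equations, this yields a binary filter $F$, a specialization $a$, and $q^{m-1}(q-\delta)$ retained vertices via Lemma~\ref{lem:decoder}. Every ordered $s$-clique loses its first edge, so $\omega<s$.

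\textbf{Step 4: the rank bound.} For the certificate I work over $E_q=\F_{2^{bh}}$, which requires $\gcd(b,h)=1$; the condition $h\equiv1\pmod b$ ensures this. The point coordinates are $(1,X,Y)$, where each $Y_i$ is an $E_q$-linear combination of compressed features in $X$. By the binary-digit remark after Lemma~\ref{lem:words}, these features have the form \eqref{eq:short-relation} with $k=\lceil\log_2 s\rceil$.

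I expect this to be the main obstacle. The word reduction must be performed with $H=bh$ letters and the Frobenius powers $2^\ell$ of the big field. A relation $Y_i=\sum c\prod X_{\sigma(j)}^{2^j}$ raised to the power $2^\ell$ gives $\prod X_{\sigma(j)}^{2^{\ell+j}}$. Positions $\ell+j$ are read modulo $H$, using $x^{2^H}=x$, so the substitution stays in $\Wcal_H$ at the cost only of cyclic wraparound near the end of the word. I would handle this by allowing the final $k-1$ positions to be arbitrary, which costs a factor of at most $d^{k}$. That factor is already present in Lemma~\ref{lem:words}, which then gives $\dim\Wcal_H\le d^k\rho^{bh}$.

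\textbf{Step 5: assembly.} Lemma~\ref{lem:filter-rank} multiplies this bound by $\binom{m+\delta}{m}$, giving $\alpha\le Cd^k\rho^{bh}$. Local access follows as in Theorem~\ref{thm:critical}, since $E_q$-arithmetic is a fixed number of $\F_q$-operations.
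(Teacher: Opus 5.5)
Your proof is correct and follows the paper's argument. It uses the same decoupled two-stage fiber count over a fixed configuration, giving fiber dimension $L-s(b(2d-3)-m)$, the same injectivity-of-embeddings argument for actual cliques, and then diagonal separation, Lemma~\ref{lem:filter-rank} and Lemma~\ref{lem:words} with $H=bh$. The wraparound you worry about in Step 4 never arises, because Lemma~\ref{lem:words} counts linear words avoiding the contiguous block $Y_i0^{k-1}$, so a $Y$ in the last $k-1$ positions is simply never substituted. The one detail the paper makes explicit that you leave implicit is a prime-field model of the per-factor distinctness, namely the algebra-valued witnesses $\sum_\ell U_{ij,\ell}(X_{i,\ell}-X_{j,\ell})=1$ in $E_q$.
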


\begin{proof}
After algebraic closure, $E_q$ splits into $b$ factors. Restrict the configuration space to the open set where the $a$-tuples $X$ at different positions are distinct in every factor, as well as the open set of distinct base labels. Fix a full configuration there. The first-stage evaluation conditions have rank $s$ for each of their $ba-m$ outputs. In each split factor, the second-stage evaluation conditions also have rank $s$, by Lemma~\ref{lem:interpolation} applied to the distinct $X$-tuples. Once the full configuration is fixed, its $X$-values are fixed as well. The second-stage conditions are therefore linear in their own coefficients and do not impose additional nonlinear conditions on the first-stage coefficients. After splitting the algebra, changing from its base coordinates to its factor coordinates is an invertible linear transformation. The coefficient blocks in the two stages and in the split factors are independent. The solution fiber has dimension
\[
L-s\bigl((ba-m)+b(r+d-2)\bigr)=L-s\bigl(b(2d-3)-m\bigr).
\]
Cover both evaluation matrices by their nonzero-minor charts, as in Lemma~\ref{lem:universal-dimension}. The scalar configuration dimension is $s b(2d-3)-b\kappa_{s,d}$, giving the claimed bound.

The open restriction has a prime-field polynomial description. For every pair $i<j$, introduce $a$ algebra-valued witnesses and require
\[
\sum_{\ell=1}^a U_{ij,\ell}(X_{i,\ell}-X_{j,\ell})=1\quad\text{in }E_q.
\]
In each split factor this is solvable precisely when that difference vector is nonzero. Base-label distinctness has the analogous scalar witnesses. The witness variables serve only to describe the open set; their dimensions are not included in the incidence bound.

For an actual field $E_q$, distinct labels give distinct $X$-tuples because the copied base coordinates are retained. Every embedding into an algebraic closure is injective, so the difference remains nonzero in every split factor. Thus every actual ordered clique lies in the restricted incidence. Theorem~\ref{thm:diagonal-separation} deletes one of its edges, and Lemmas~\ref{lem:filter-rank} and \ref{lem:words} give the independence estimate. All polynomial families and filters are fixed before $h$ varies.
\end{proof}

\begin{corollary}\label{cor:frobenius-exponent}
With $\rho$ defined by \eqref{eq:two-stage-parameters} and \eqref{eq:rho}, put
\begin{equation}\label{eq:EF}
E_{\rm F}(s)=\max_{3\le d\le s}\frac{\mu_{s,d}}{\log_2\rho}.
\end{equation}
Then explicit witnessing graphs give $R(s,t)\ge\Omega_s(t^{E_{\rm F}(s)})$. In particular, $d=13$ for $s=16$ gives $\mu=38/5$, $a=8$, $r=4$, $k=4$, and
\begin{equation}\label{eq:R16}
R(16,t)\ge\Omega\left(t^{\frac{38}{5\log_2\rho_{16}}}\right),\qquad
\rho_{16}^4-13\rho_{16}^3+4=0,\quad 12<\rho_{16}<13.
\end{equation}
The exponent is $2.0539221767\ldots$.
\end{corollary}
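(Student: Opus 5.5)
The plan is to combine Proposition~\ref{prop:two-stage} with the extraction Lemma~\ref{lem:extraction}, and then check the numerical instance $s=16$, $d=13$.

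\emph{General bound.} Fix $s$ and an arbitrary $3\le d\le s$, and write $\mu_{s,d}=m/b$ in lowest terms. First I check the parameters needed by the two-stage family. Since $\mu_{s,d}\le d-1$, we have $a=\lceil\mu_{s,d}\rceil\le d-1$, so $r=d-1-a\ge0$. Also $m\le ba$, so copying $m$ label coordinates into the $ba$ base coordinates of $X$ is legitimate. Moreover $a\ge1$, because $\mu_{s,d}>0$.

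\emph{The case $r=0$.} If $r=0$, no $Y$ coordinates are present and $\rho=d$. Proposition~\ref{prop:two-stage} then degenerates to the critical construction of Theorem~\ref{thm:critical}. Alternatively, Lemma~\ref{lem:words} gives the bound $d^{H}$ directly.

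\emph{The case $r>0$.} Proposition~\ref{prop:two-stage} supplies, for every sufficiently large $h\equiv1\pmod b$, a $K_s$-free graph with $q^{m-1}(q-\delta)\ge\tfrac12 2^{mh}$ vertices for large $h$, and with $\alpha\le Cd^k\rho^{bh}$. Lemma~\ref{lem:extraction} applies with $\theta=\rho>1$, constant $Cd^k$ and progression step $g=b$. It gives
\[
R(s,t)\ge\Omega\bigl(t^{m/(b\log_2\rho)}\bigr)=\Omega\bigl(t^{\mu_{s,d}/\log_2\rho}\bigr).
\]
Taking the $d$ attaining the maximum in \eqref{eq:EF} yields the first claim. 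The prefactor and availability threshold depend only on $s$, since $d$ ranges over a finite set. The graphs are explicit in the sense already established by the proposition.

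\emph{Arithmetic for $s=16$, $d=13$.} Compute
\[
\kappa=16\cdot12-\binom{13}{2}=192-78=114,\qquad \mu=\frac{114}{15}=\frac{38}{5},
\]
so $m=38$ and $b=5$. Then $a=\lceil 7.6\rceil=8$, $r=12-8=4$, and $k=\lceil\log_2 16\rceil=4$. The polynomial $f(x)=x^4-13x^3+4$ satisfies
\[
f(12)=-1728+4<0,\qquad f(13)=4>0,
\]
and Lemma~\ref{lem:words} says it has a unique root $\rho_{16}$ in $(12,13)$. This is \eqref{eq:R16}. The decimal value $2.0539221767\ldots$ follows by numerically solving for $\rho_{16}\approx12.9981\ldots$ and evaluating $7.6/\log_2\rho_{16}$. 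A short bisection or Newton computation with interval error control would certify the digits.

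\emph{Main obstacle.} The only step I expect to be delicate is confirming that Proposition~\ref{prop:two-stage}'s hypotheses hold uniformly in the chosen $d$, in particular $m\le ba$ and $r\le d-2$, so that the proposition can be quoted directly. Beyond that, the proof reduces to the numerical certification of the stated decimals.
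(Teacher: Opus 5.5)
Your proposal is correct and follows the paper's proof. Like the paper, you apply Lemma~\ref{lem:extraction} to Proposition~\ref{prop:two-stage} with $\theta=\rho$, step $g=b$ and fixed constant $Cd^k$, then verify $\kappa_{16,13}=114$, $m/b=38/5$ and that $x^4-13x^3+4$ changes sign on $(12,13)$. Your separate treatment of $r=0$ is harmless but not needed, since the proposition's bound $Cd^k\rho^{bh}$ already covers that case once Lemma~\ref{lem:words} sets $\rho=d$.
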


\begin{proof}
Apply Lemma~\ref{lem:extraction} to Proposition~\ref{prop:two-stage}; $d^k$ is fixed. For the displayed instance, $\kappa_{16,13}=114$, so $m=38$, $b=5$ and $H=5h$, with $h\equiv1\pmod5$. The vertex count is of order $2^{38h}$ and the independence estimate is a fixed multiple of $\rho_{16}^{5h}$. Their ratio of logarithmic growth rates is the stated exponent.
\end{proof}

\begin{proposition}\label{prop:asymptotic}
As $s\to\infty$,
\[
E_{\rm crit}(s)=\frac{s}{2\log_2s}\left(1+\frac{1}{4(\ln s)^2}+O((\ln s)^{-3})\right),
\qquad E_{\rm F}(s)=\left(\frac12+o(1)\right)\frac{s}{\log_2s}.
\]
The maximum defining $E_{\rm crit}$ is attained at
$d=s-s/(2\ln s)+O(s/(\ln s)^2)$, up to integral rounding.
\end{proposition}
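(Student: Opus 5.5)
The statement has two parts. The first is a sharp expansion of $E_{\rm crit}$ coming from the closed form \eqref{eq:Ecrit}. The second is a cruder sandwich for $E_{\rm F}$ using $d-1<\rho<d$ from Lemma~\ref{lem:words}. For $E_{\rm crit}$ I would substitute $d=s-x$ with $0\le x\le s-3$ into \eqref{eq:Ecrit}. The numerator simplifies exactly:
\[
(d-1)(2s-d)=(s-x-1)(s+x)=s^2-s-x^2-x,
\]
so that
\[
\frac{(d-1)(2s-d)}{2(s-1)}=\frac s2-\frac{x^2+x}{2(s-1)}.
\]
Writing $y=x/s$ and $\Lambda=\ln s$, I would then expand
\[
\log_2 d=\log_2 s\bigl(1+\ln(1-y)/\Lambda\bigr).
\]
This gives
\[
f(d)=\frac{s}{2\log_2 s}\,(1-y^2+O(1/s))\bigl(1-\ln(1-y)/\Lambda\bigr)^{-1}.
\]

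The main step is to maximize $g(y)=(1-y^2)\bigl(1-\ln(1-y)/\Lambda\bigr)^{-1}$. For small $y$ this is $g(y)=1+y/\Lambda-y^2+O(y^2/\Lambda+y^3)$. The quadratic part $y/\Lambda-y^2$ is maximized at $y=1/(2\Lambda)$ with value $1/(4\Lambda^2)$. The neglected terms are $O(\Lambda^{-3})$ on the range $y=O(1/\Lambda)$, so the maximum is $1+1/(4\Lambda^2)+O(\Lambda^{-3})$. The $O(1/s)$ corrections from $s-1$ versus $s$ and from $x$ versus $x^2$ are negligible, and so is integral rounding of $d$: moving $y$ by $1/s$ changes $g$ by $O(1/(s\Lambda))$. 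This gives the location $d=s-s/(2\ln s)+O(s/(\ln s)^2)$. Perturbing $y$ by $\varepsilon/\Lambda$ from the optimum costs $\varepsilon^2/\Lambda^2$, which pins the location to that precision once the value is known to $O(\Lambda^{-3})$.

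I expect the main obstacle to be making the maximization global, that is, excluding $y$ far from $1/(2\Lambda)$. For $y\ge C/\Lambda$ I would use $-\ln(1-y)\le y/(1-y)$ and a direct comparison to show $g(y)<1$ there. For small $d$, say $d\le s/2$, the crude bound $f(d)\le s/(2\log_2 d)$ already gives something far below the $d=s$ value unless $\log_2 d$ is close to $\log_2 s$, which contradicts $d\le s/2$. The lower bound $E_{\rm crit}\ge s/(2\log_2 s)$ is just $y=0$.

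For $E_{\rm F}$, the inequality $\rho<d$ gives $E_{\rm F}\ge E_{\rm crit}$, which supplies the lower bound. For the upper bound I would use $\rho>d-1$, so each term is at most $\mu_{s,d}/\log_2(d-1)$. I would combine this with $\mu_{s,d}\le\min(d-1,s/2)$, where the first bound is noted before \eqref{eq:Ecrit} and the second follows from the formula above. I would then split the range of $d$. If $d-1\le s/\ln s$, the term is at most $(d-1)/\log_2(d-1)\le (s/\ln s)/\log_2(s/\ln s)=o(s/\log_2 s)$. The case $d=3$ gives $\log_2 2=1$, which is harmless. Otherwise $\log_2(d-1)\ge\log_2 s-\log_2\ln s=(1-o(1))\log_2 s$, and the term is at most $(s/2)/((1-o(1))\log_2 s)$. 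Together these give $E_{\rm F}=(1/2+o(1))s/\log_2 s$.
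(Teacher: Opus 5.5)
Your route is the same as the paper's: substitute $d=s(1-y)$, expand the ratio to $s/(2\log_2 s)$ near $y=1/(2\ln s)$, exclude the rest of the range, and sandwich $E_{\rm F}$ using $d-1<\rho\le d$. Your $E_{\rm F}$ argument and the value $1+1/(4\Lambda^2)+O(\Lambda^{-3})$ are fine. The display for $f(d)$ should read $(1+\ln(1-y)/\Lambda)^{-1}$, but your expansion $1+y/\Lambda-y^2$ already uses the correct sign.

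The exclusion of small $d$ fails as written. For $d\le s/2$ you have $\log_2 d\le\log_2 s-1$, so $s/(2\log_2 d)$ is \emph{larger} than the value $s/(2\log_2 s)$ at $d=s$. The bound $\mu\le d-1$ does not help near $d=s/2$ either. What excludes these $d$ is the loss in the numerator. From your own formula, $\mu_{s,d}\le(1-y^2)s/2$, so for $s/(\ln s)^2\le d\le(1-\varepsilon)s$ one gets $\mu\le(1-\varepsilon^2)s/2$, while $\log_2 d=(1+o(1))\log_2 s$ uniformly. For $d\le s/(\ln s)^2$, the bound $\mu\le d-1$ gives $o(s/\log s)$. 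This is how the paper forces $d/s\to1$. Alternatively, your comparison $-\ln(1-y)\le y/(1-y)$ gives $g(y)<1$ whenever $\Lambda y(1-y)>1$, hence on all of $C/\Lambda\le y\le 1-C/\Lambda$ for $C>2$. The leftover range $d\le Cs/\ln s$ is then handled by $\mu\le d-1$, as in your $E_{\rm F}$ argument.

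The localization argument also falls short of the stated precision. You know the value only up to $O(\Lambda^{-3})$, and a displacement $\delta$ from $1/(2\Lambda)$ costs about $\delta^2$. So you can only conclude $\delta^2=O(\Lambda^{-3})$, that is, $d=s-s/(2\ln s)+O(s/(\ln s)^{3/2})$, not $O(s/(\ln s)^2)$. You need derivative information. The paper uses the stationary equation $2x(L_s+\ln(1-x))=1+x$ of the approximating ratio, which for $x=o(1)$ gives $x=1/(2L_s)+O(L_s^{-2})$. Equivalently, your error term $O(y^2/\Lambda)$ has $y$-derivative $O(\Lambda^{-2})$ on $y=O(1/\Lambda)$, and the exact $O(1/s)$ correction has derivative $O(1/s)$. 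Hence $g'(y)=1/\Lambda-2y+O(\Lambda^{-2})$, and the maximizer lies at $1/(2\Lambda)+O(\Lambda^{-2})$. Another fix is to expand $g$ one order further, so that the value is known to $O(\Lambda^{-4})$ and the quadratic cost then pins $\delta=O(\Lambda^{-2})$.
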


\begin{proof}
For the first assertion, treat $d$ as real in \eqref{eq:Ecrit}. Choices with $d\le s/(\ln s)^2$ give $o(s/\log s)$ by $\mu\le d-1$. Above that range, $\ln d=(1+o(1))\ln s$ uniformly. If also $d/s\le1-\varepsilon$, the normalized numerator loses at least a fixed proportion $\varepsilon^2$. Since $d=s$ gives $s/(2\log_2s)$, a maximizer has $d/s\to1$. Write $d=s(1-x)$ and $L_s=\ln s$. Apart from a relative $O(1/s)$ error, the ratio to $s/(2\log_2s)$ is
\[
\frac{1-x^2}{1+\ln(1-x)/L_s}.
\]
The stationary equation for the displayed approximation is $2x(L_s+\ln(1-x))=1+x$. As $x=o(1)$, it gives $x=1/(2L_s)+O(L_s^{-2})$. Expansion at this value gives $1+1/(4L_s^2)+O(L_s^{-3})$. The integral mesh has size $1/s$ and changes neither expansion.

For the second assertion, $\rho\le d$ gives $E_{\rm F}\ge E_{\rm crit}$. Also $\mu_{s,d}\le\min(d-1,s/2)$ and $\rho>d-1$ when $r>0$, while $\rho=d$ when $r=0$. If $d\le s/(\ln s)^2$, these bounds give $o(s/\log s)$, using that $x/\log_2x$ is increasing for $x$ sufficiently large and treating bounded $d$ separately. In the remaining range, $\log_2(d-1)=(1+o(1))\log_2s$, uniformly, and $\mu\le s/2$. This proves the upper bound.
\end{proof}

\subsection{Finite-block rank certificates}\label{sec:finite-blocks}
Short word relations give one usable upper bound on the feature dimension. A fixed-length symbolic rank computation gives another.

\begin{proposition}\label{prop:finite-block}
For a fixed point family, write $\beta_n=\dim\Wcal_n$. Then $\beta_{u+v}\le\beta_u\beta_v$. Consequently the limit $\eta=\lim_{n\to\infty}\beta_n^{1/n}=\inf_n\beta_n^{1/n}$ exists. More concretely, for every fixed $\ell\ge1$,
\[
\beta_H\le d^{\ell-1}\beta_\ell^{\lfloor H/\ell\rfloor}.
\]
If $\beta_\ell^{\rm gen}$ is the rank of the length-$\ell$ features over the rational function field of the coefficients, every specialization has rank at most $\beta_\ell^{\rm gen}$. In any compatible critical construction with $\beta_\ell^{\rm gen}>1$, this yields the Ramsey exponent $\mu\ell/\log_2\beta_\ell^{\rm gen}$.
\end{proposition}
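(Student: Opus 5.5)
The submultiplicativity comes from splitting words. For $n=u+v$, a generator of $\Wcal_n$ has the form $\prod_{\ell=0}^{n-1}P_{\sigma(\ell)}^{2^\ell}$. Split $\sigma$ into its first $u$ positions and its last $v$ positions. The generator then factors as $U(z)\cdot V(z)^{2^u}$, with $U$ a generator of $\Wcal_u$ and $V$ a generator of $\Wcal_v$. Since Frobenius $x\mapsto x^{2^u}$ is additive, the functions $\{V^{2^u}: V\in\Wcal_v\}$ span a space of dimension at most $\beta_v$. Products of spanning sets then give $\beta_{u+v}\le\beta_u\beta_v$.

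The existence of $\eta=\lim\beta_n^{1/n}=\inf\beta_n^{1/n}$ follows from Fekete's lemma applied to $\log\beta_n$. If some $\beta_n=0$ the statement is trivial, but $\beta_n\ge1$ anyway because of the constant feature $\sigma_0$.

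For the concrete bound, write $H=\ell\lfloor H/\ell\rfloor+\varrho$ with $0\le\varrho<\ell$. Iterating the product bound gives
\[
\beta_H\le\beta_\varrho\,\beta_\ell^{\lfloor H/\ell\rfloor}.
\]
The trivial count of words gives $\beta_\varrho\le d^\varrho\le d^{\ell-1}$.

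For the generic-rank statement, regard the length-$\ell$ features as polynomials in the labels whose coefficients are polynomials in the family coefficients $A$. The rank of their span equals the rank of the coefficient matrix obtained by expanding each feature in label monomials. Actual functions on $\F_q$-labels may have additional relations, but these only lower the rank, so it suffices to bound the formal polynomial rank.

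The coefficient matrix has entries in $\F_2[A]$. Its rank over $\F_2(A)$ is $\beta_\ell^{\rm gen}$, so every minor of size $\beta_\ell^{\rm gen}+1$ vanishes identically. These minors therefore vanish under every specialization $A=a$, which gives rank at most $\beta_\ell^{\rm gen}$. Since evaluation on a label set is a linear map, it does not increase the rank.

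The step I expect to need the most care is consistency with the restriction-of-scalars model. There the rank computation of Lemma~\ref{lem:rank} is over $E_q=\F_{2^{bh}}$ with $H=bh$, the features use Frobenius over that field, and the family coefficients live in $E_q$. I would define $\Wcal_\ell$ and $\beta_\ell^{\rm gen}$ with respect to this same Frobenius and coefficient ring in the ``compatible'' construction, and then argue as above.

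To obtain the exponent, combine the pieces as follows.
\begin{enumerate}
\item Lemma~\ref{lem:rank} gives $\alpha\le\dim\Wcal_{bh}$ for the unfiltered graph.
\item Lemma~\ref{lem:filter-rank} applies the filter multiplier $\binom{m+\delta}{m}$.
\item The finite-block bound with $\beta_\ell\le\beta_\ell^{\rm gen}$ then gives
\[
\alpha\le C\,d^{\ell-1}\bigl((\beta_\ell^{\rm gen})^{1/\ell}\bigr)^{bh}.
\]
\item Lemma~\ref{lem:extraction}, applied with $\theta=(\beta_\ell^{\rm gen})^{1/\ell}>1$ and $m/b=\mu$, yields the exponent
\[
\frac{m}{b\log_2\theta}=\frac{\mu\ell}{\log_2\beta_\ell^{\rm gen}}.
\]
\end{enumerate}
The vertex count $q^{m-1}(q-\delta)\ge c_0 2^{mh}$ and the clique freeness are unchanged from Theorem~\ref{thm:critical}.
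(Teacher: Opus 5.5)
Your proposal is correct and follows the paper's proof essentially step for step. Both arguments use word splitting with Frobenius additivity for $\beta_{u+v}\le\beta_u\beta_v$, subadditivity (Fekete) for the limit, and a block-plus-remainder split for the finite bound. Both then show that the generic minors vanish under every specialization, and finish with Lemma~\ref{lem:extraction} at $\theta=(\beta_\ell^{\rm gen})^{1/\ell}$. Your added remarks are consistent with the paper and only make explicit what its terse proof and the word ``compatible'' leave implicit: $\beta_n\ge1$ via the constant feature, and in the restriction-of-scalars model the generic rank must be taken with the same Frobenius and coefficient algebra.
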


\begin{proof}
Split a feature word into its first $u$ letters and last $v$ letters. It is a product of a member of $\Wcal_u$ and a $2^u$-th power of a member of $\Wcal_v$. Products of bases span a space of dimension at most $\beta_u\beta_v$. The field is perfect, so Frobenius does not change the dimension. Subadditivity of $\log\beta_n$ gives the limit and infimum; splitting into blocks of length $\ell$ and a remainder gives the finite inequality.

For a polynomial point family, expand the feature polynomials in the input monomial basis. The resulting finite matrix has polynomial entries in the coefficients. Every minor above its generic rank is identically zero and remains zero under specialization. Thus the generic rank is a uniform upper bound, including for exceptional coefficient choices. Since $\beta_\ell^{\rm gen}\ge1$, the finite-block inequality is bounded by a fixed multiple of $(\beta_\ell^{\rm gen})^{H/\ell}$. Apply Lemma~\ref{lem:extraction}.
\end{proof}

The limit by itself does not imply $\beta_H=O(\eta^H)$. The finite-block statement supplies the prefactor needed for an exact fixed-$s$ power of $t$. No additional numerical improvement from a symbolic block computation is asserted here.

\section{Other forbidden configurations and fixed characteristic}\label{sec:general}
The critical argument depends on the configuration codimension, universal evaluation rank, and a distinguished edge. Flags supply these data, but the filtering step does not depend on their geometry.

\begin{proof}[Proof of Theorem~\ref{thm:configuration}]
Use compressed features through degree $\max_i v_i-1$. Universal interpolation, applied separately to each family, gives incidence dimension at most $L+mv_i-b\kappa_i\le L+m$. Relabel the positions of its distinguished edge as the remembered pair. The proof of Theorem~\ref{thm:diagonal-separation} produces $F_i$ vanishing on that pair image, with $F_i(A,Z,Z)\not\equiv0$.

The product $F=\prod_iF_i$ has nonzero diagonal restriction because the coefficient--label polynomial ring is a domain. Specialize a nonzero coefficient of that restriction and decode a fixed-degree polynomial complement. An occurrence of family $i$ loses its distinguished edge through the factor $F_i$. Lemma~\ref{lem:filter-rank} gives a fixed multiplier for the product. Finitely many orderings can be included as separate families. No connectedness assumption on a pattern is needed.
\end{proof}

\subsection{A fixed prime characteristic}
For a fixed prime $p$, the same interpolation, scalar restriction, diagonal separation and polynomial-complement algorithms work over $\F_p$. The degree of scalar restriction is again taken coprime to the varying extension degree. The rank denominator changes.

\begin{proposition}\label{prop:prime}
Put $B_{d,p}=\binom{d+p-2}{p-1}$. Over a flag field of order $p^H$, the assignment-uniform independence certificate has rank at most $B_{d,p}^H$. The critical construction therefore gives, for fixed $s,d,p$,
\[
R(s,t)\ge\Omega_{s,d,p}\left(t^{\kappa_{s,d}/((s-1)\log_p B_{d,p})}\right).
\]
\end{proposition}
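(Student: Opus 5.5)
We reprove the rank certificate of Lemma~\ref{lem:rank} in characteristic $p$. Put $Q=p^H$ and again take $B_{z,w}=1-K(z,w)^{Q-1}$. It is the incidence indicator with $B_{z,z}=1$, and it is lower triangular with unit diagonal on every independent set. Only the factorization \eqref{eq:frobenius-factor} must change. Here
\[
Q-1=\sum_{\ell=0}^{H-1}(p-1)p^\ell .
\]
Frobenius additivity therefore gives
\[
K(z,w)^{Q-1}=\prod_{\ell=0}^{H-1}\Bigl(\sum_{i=0}^{d-1}P_i(z)^{p^\ell}H_i(w)^{p^\ell}\Bigr)^{p-1}.
\]
We expand each factor $\bigl(\sum_i x_iy_i\bigr)^{p-1}$ by the multinomial theorem. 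The result is a sum over exponent vectors $\gamma\in\mathbb Z_{\ge0}^d$ with $|\gamma|=p-1$ of $\binom{p-1}{\gamma}x^\gamma y^\gamma$. There are exactly $\binom{d+p-2}{p-1}=B_{d,p}$ such vectors. Each term is an outer product of the monomial $P(z)^{\gamma p^\ell}$ with a function of $w$.

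Taking the product over $\ell$, every column of $K^{Q-1}$ lies in the span of the functions
\[
\prod_\ell P(z)^{\gamma_\ell p^\ell},
\]
indexed by sequences $(\gamma_0,\ldots,\gamma_{H-1})$. There are at most $B_{d,p}^H$ of them. The constant sequence $\gamma_\ell=(p-1)e_0$ gives the feature $1$, because $P_0=1$. Hence the constant term of $B$ is absorbed exactly as before, and $\rank B\le B_{d,p}^H$ for every assignment and every order. The signs in $K$ and the multinomial coefficients are harmless, since they only enter the $w$-side factors. For $p=2$ this recovers $d^H$.

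For the Ramsey bound, we rerun Sections~\ref{sec:interpolation}--\ref{sec:scalars} over $\F_p$. Universal interpolation (Lemma~\ref{lem:interpolation}) holds over any field. The dimension count of Theorem~\ref{thm:exact-dimension} is characteristic-free, so the codimension is still $\kappa_{s,d}$. For descent in Lemma~\ref{lem:degree}, we replace squaring by the $p$th power; row-echelon uniqueness then forces entries with $c^p=c$, that is, entries in $\F_p$. Diagonal separation (Theorem~\ref{thm:diagonal-separation}) and specialization (Lemma~\ref{lem:adaptive}) go through verbatim.

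For restriction of scalars, we choose $g_b$ irreducible over $\F_p$ and restrict to $\gcd(b,h)=1$; then $E_q=\F_{p^{bh}}$. We take $m/b=\kappa_{s,d}/(s-1)$, which satisfies $m\le b(d-1)$, and obtain graphs with $\asymp p^{mh}$ vertices, no $K_s$, and $\alpha\le C B_{d,p}^{bh}$ by Lemma~\ref{lem:filter-rank}.

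Finally we apply Lemma~\ref{lem:extraction} with base $p$ in place of $2$. With $|V_h|\ge c_0p^{mh}$ and $\theta=B_{d,p}$, the same computation gives the exponent $m/(b\log_pB_{d,p})$, which is the stated one. We must check that $B_{d,p}>1$, which holds because $d\ge3$.

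The main point to verify carefully is the multinomial factorization: that exponent vectors at different levels $\ell$ combine into at most $B_{d,p}^H$ distinct feature functions, and that the constant feature is among them so the affine term is absorbed. The remaining steps are routine transcription of the characteristic-two arguments.
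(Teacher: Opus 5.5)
Your proposal is correct and follows the paper's proof essentially step for step: the same factorization $K^{p^H-1}=\prod_{\ell}(K^{p-1})^{p^\ell}$ with $B_{d,p}$ multinomial monomials per level, absorption of the constant term via $P_0=1$, $p$th-power row-echelon descent, and extraction along an arithmetic progression with base-$p$ logarithms. The one step the paper adds that you pass over is that the decoder of Lemma~\ref{lem:decoder}, which Theorem~\ref{thm:diagonal-separation} invokes, must split roots with the tests $\Tr_{\F_{p^h}/\F_p}(\beta_jX)-c$ for every $c\in\F_p$ instead of the binary tests, so ``verbatim'' is slightly too strong there; for the bare Ramsey bound, the zero count $q^m-\delta q^{m-1}$ already supplies enough vertices.
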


\begin{proof}
Replace the binary Frobenius product by
\[
K^{p^H-1}=\prod_{j=0}^{H-1}\bigl(K^{p-1}\bigr)^{p^j}.
\]
Expanding $K^{p-1}$ uses the $B_{d,p}$ point monomials of degree $p-1$ in $d$ coordinates. Their multinomial coefficients are nonzero modulo $p$. The normalized constant coordinate absorbs the all-ones term exactly as in Lemma~\ref{lem:rank}. Thus there are at most $B_{d,p}^H$ features. The matrix $1-K^{p^H-1}$ is still triangular with diagonal $1$ on independent sets.

Prime-field descent uses $p$-th power Frobenius. To split the rational roots in the decoder, replace the binary trace tests by $\Tr_{\F_{p^h}/\F_p}(\beta_jX)-c$ for all $c\in\F_p$. The trace pairing is nondegenerate, and $p$ is fixed, so the cost remains polynomial in $h$. Apply the scalar critical construction with $\mu=m/b=\kappa_{s,d}/(s-1)$ and extract along $h\equiv1\pmod b$ as in Lemma~\ref{lem:extraction}, using logarithms to base $p$.
\end{proof}

\section{Construction recipe and further questions}\label{sec:recipe}
For fixed $s,d$, the construction can be implemented in four stages. In preprocessing, reduce $\mu_{s,d}$ to $m/b$, choose the one- or two-stage coordinate family and a binary basis of $E_0$, and compute either a diagonal-separating filter or the finite-fiber polynomials. This stage terminates independently of $h$; no polynomial bound in $s,d$ is asserted. For an admissible $h$, initialize $\F_{2^h}$ and the fixed algebra $E_q$. Then specialize coefficients using a nonzero coefficient of the required deletion polynomial. Finally initialize its coordinate change and use the decoder and edge test, computing conflict colors locally if the quantified construction is chosen. The last three stages have deterministic bit cost polynomial in $h$ for fixed parameters.

The indexing domain is the product in \eqref{eq:product-labels}. Given two indices, decode their labels, compare them in the fixed order, evaluate the corresponding flags, and apply the incidence test together with the chosen filter or color rule. Neither enumerating the full vertex set nor finding a largest color class is part of a local query.

The main geometric limitation is now explicit. The unrestricted ordered flag configuration space has the exact codimension $\kappa_{s,d}$; refining the complete-flag count cannot improve it. A further gain must come from restricting configurations through a new coordinate family, finding a smaller rank growth rate compatible with universal interpolation, or proving diagonal separation beyond the dimension criterion. Properness of the pair image alone does not suffice for the last approach.

Finite-block ranks offer a concrete algebraic problem: compute $\beta_\ell^{\rm gen}$ for a compatible family and compare its growth with the short-word bound. A useful improvement would require a proved rank certificate and the same control of all distinct forbidden tuples. A low-rank coordinate family alone does not establish clique avoidance, because its incidence may acquire large exceptional strata.

\paragraph{Acknowledgements and declaration of assistance.}
The author thanks Sam Matheus for bringing this problem to his attention. Most of the ideas in this paper originated in exchanges with GPT-6 Pro. GPT-6.1 sol was used in Codex to assist with writing and revising the manuscript. The author has checked all arguments and computations and takes full responsibility for the contents of the paper.

\appendix
\section{Dense indexing of a polynomial complement}\label{sec:decoder}

\begin{lemma}\label{lem:decoder}
Let $F\in\F_q[Z_1,\ldots,Z_m]$ be nonzero of degree at most $\Delta$, with $q=2^h>\Delta$. There is a deterministically decodable subset of $F\ne0$ of exactly $q^{m-1}(q-\Delta)$ points. For fixed $m,\Delta$, decoding and testing membership in this subset take time polynomial in $h$.
\end{lemma}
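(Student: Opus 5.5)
We prove the lemma by a Schwartz–Zippel-type fibering in one coordinate, with the fibers enumerated explicitly. First apply an invertible linear change of coordinates over $\F_2$, or over a fixed small subfield, fixed during preprocessing, so that $F$ has a monomial $Z_m^{\Delta'}$ of top total degree $\Delta'=\deg F\le\Delta$. Then the coefficient of $Z_m^{\Delta'}$ is a nonzero constant. If no binary change works because of characteristic-two degeneracies, we search over the finitely many unimodular matrices with entries in $\F_{2^j}$ for a fixed $j$ with $2^j>\Delta$. This search depends only on $F$'s degree pattern, since the leading form of $F$ is a nonzero homogeneous polynomial of degree $\le\Delta<2^j$, so it has a nonvanishing point over $\F_{2^j}$. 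After normalizing, $F(z',Z_m)$ is, for every $z'\in\F_q^{m-1}$, a polynomial in $Z_m$ of exact degree $\Delta'$ with a nonzero leading constant. It therefore has at most $\Delta'\le\Delta$ roots in $\F_q$, and at least $q-\Delta$ non-roots.

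Index a vertex by a pair $(z',i)$ with $z'\in\F_q^{m-1}$ and $0\le i<q-\Delta$. Decoding $(z',i)$ returns the point whose last coordinate is the $i$-th element of $\F_q$ with $F(z',\cdot)\ne0$, under a fixed order such as the binary encoding of field elements. This gives exactly $q^{m-1}(q-\Delta)$ points, and we discard the last $\Delta-(\#\text{roots})$ non-roots to get exactly that count. We then apply the inverse coordinate change.

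The main obstacle is computing the $i$-th non-root in time polynomial in $h$, since $q$ is exponential. We will first find the at most $\Delta$ roots of the univariate polynomial $f=F(z',Z_m)\in\F_q[Z_m]$ deterministically. Compute $\gcd(f,Z_m^q-Z_m)$ by repeated squaring modulo $f$; this isolates the product of distinct linear factors over $\F_q$. Split it using the trace tests $\gcd(g,\Tr(\beta_jZ_m))$ as $\beta_j$ ranges over a fixed $\F_2$-basis of $\F_q$. Since the trace pairing is nondegenerate, distinct roots are separated by some $\beta_j$, so recursive splitting of degree $\le\Delta$ factors takes $O(h\Delta)$ gcds. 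This is Berlekamp-style deterministic root finding in small characteristic, and it is polynomial in $h$ for fixed $\Delta$. With the root set $\{r_1<\dots<r_u\}$ sorted in the encoding order, the $i$-th non-root is found in time polynomial in $h$: map $i$ to the integer $i+\#\{r_\ell\le\cdot\}$ by scanning the $u\le\Delta$ roots, then convert that integer to a field element.

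Membership of a point $z$ in the subset is tested in the same way. Transform coordinates, recompute the roots of $F(z',\cdot)$, check that $z_m$ is not a root, and check that its rank among the non-roots is less than $q-\Delta$. All steps use a fixed number of field operations and polynomial arithmetic of degree at most $\Delta$ modulo $f$, with $O(h)$ exponentiation steps. This gives the claimed polynomial bit cost for fixed $m$ and $\Delta$.
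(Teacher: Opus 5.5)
Everything after the coordinate change matches the paper's proof:
\begin{itemize}
\item the gcd with $X^q-X$, computed by repeated squaring modulo the fiber polynomial;
\item splitting by trace tests over an $\F_2$-basis;
\item indexing the $i$th nonroot by scanning the at most $\Delta$ sorted roots;
\item membership via the inverse change and a rank count.
\end{itemize}

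\textbf{The gap is in how you obtain the coordinate change.} A field $\F_{2^j}$ is a subfield of $\F_q=\F_{2^h}$ only when $j\mid h$. The lemma must hold for every $h$ with $2^h>\Delta$. In the paper's applications $h$ runs through a progression such as $h\equiv1\pmod b$ that contains primes, and for those $h$ the only proper subfield is $\F_2$. There a matrix with entries in $\F_{2^j}$ does not map $\F_q^m$ to itself. The fibers over $z'\in\F_q^{m-1}$ and the index set $\F_q^{m-1}\times\{0,\ldots,q-\Delta-1\}$ then no longer describe labels in $\F_q^m$. Your fallback therefore fails for infinitely many $h$, exactly when $\F_2$ alone is insufficient.

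\textbf{The fix: no subfield is needed, only a subset of $\F_q$.}
\begin{itemize}
\item Let $\delta=\deg F\le\Delta<q$, and let $G$ be the first $\delta+1$ elements of $\F_q$ in the encoding order.
\item The leading form $F_\delta$ has coefficients in $\F_q$ and degree at most $\delta<|G|$ in each variable. The iterated univariate root bound gives $u\in G^m$ with $F_\delta(u)\ne0$. For $\delta>0$ this $u$ is nonzero, since $F_\delta(0)=0$.
\item Complete $u$ to a basis with standard basis vectors and let $S$ be the matrix with last column $u$. Then the coefficient of $x^\delta$ in $F(S(y,x)^T)$ is the nonzero constant $F_\delta(u)$. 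This is the paper's argument.
\end{itemize}

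Your remark that the search ``depends only on $F$'s degree pattern'' is also inaccurate. The right $u$ depends on the coefficients of $F_\delta$, and in the applications these lie in $\F_q$ and vary with $h$. So the search cannot be done in $h$-independent preprocessing; it must be run for the given $F$. This is harmless: there are at most $(\Delta+1)^m$ candidates, each checked by an evaluation costing time polynomial in $h$.

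With this replacement, the rest of your proof goes through as written.
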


\begin{proof}
Put $\delta=\deg F$. If $\delta=0$, take the first $q-\Delta$ encoded field elements in each last-coordinate fiber. Suppose $\delta>0$ and let $F_\delta$ be its highest homogeneous part. Choose the first $\delta+1$ field elements as a grid in each coordinate. Some grid vector $u$ has $F_\delta(u)\ne0$: otherwise repeated application of the univariate root bound would make $F_\delta$ the zero polynomial. This search has size depending only on the fixed parameters. Since $u\ne0$, complete it deterministically to a basis by testing standard basis vectors in order. Let $S$ be the resulting invertible matrix with last column $u$.

Write $Z=S(y,x)^T$, with $y\in\F_q^{m-1}$. In $\widehat F(y,x)=F(S(y,x)^T)$ the coefficient of $x^\delta$ is the constant $F_\delta(u)\ne0$. Every fiber polynomial $f(X)=\widehat F(y,X)$ is therefore nonzero of degree $\delta$, with at most $\Delta$ rational roots.

Compute
\[
R(X)=\gcd(f(X),X^q-X).
\]
The polynomial $X^q-X$ has exactly the field elements as simple roots, so $R$ is the product of the distinct linear factors corresponding to rational roots of $f$. It is not necessary to form $X^q-X$: $h$ repeated squarings compute $X^q$ modulo $f$, which suffices for the gcd. If $R=1$, there are no roots to exclude.

Choose a basis $\beta_1,\ldots,\beta_h$ of $\F_q$ over $\F_2$. Split each current factor using gcds with
\begin{equation}\label{eq:trace-split}
T_j(X)=\sum_{\ell=0}^{h-1}(\beta_jX)^{2^\ell}
\quad\text{and}\quad T_j(X)-1,
\end{equation}
computed modulo that factor. On field elements $T_j$ is the trace and takes values $0,1$. Distinct roots are separated by some test because the trace pairing is nondegenerate. Indeed, $X+X^2+\cdots+X^{2^{h-1}}$ is a nonzero polynomial of degree less than $q$, so it has a value $w$ with nonzero trace. For any $v\ne0$, $\Tr((w/v)v)=\Tr(w)\ne0$. Linearity implies that some basis element pairs nontrivially with $v$. Apply this to the difference of two roots. After the $h$ tests, every remaining factor is linear. All degrees are at most $\Delta$, so root listing has polynomial bit cost in $h$ for fixed parameters.

Identify field elements with integers $0,\ldots,q-1$ by their binary polynomial-basis encoding. Use the product index set
\begin{equation}\label{eq:product-labels}
\F_q^{m-1}\times\{0,1,\ldots,q-\Delta-1\}.
\end{equation}
Given $(y,j)$, sort the excluded root indices in that fiber. Start at $x=j$ and pass through this list. If the next forbidden index is at most the current $x$, increase $x$ by one. Each increment compensates for one forbidden entry preceding the desired nonroot. At the end $x$ is the $j$th nonroot, counting from zero, and $x\le j+\Delta\le q-1$. Apply $S$ to $(y,x)^T$ to return to the original label. The map is injective, within and between fibers.

To test membership of a label, apply $S^{-1}$, reject a rational root, and subtract the number of smaller excluded indices from its last-coordinate index. The label belongs to the selected subset precisely when this nonroot index is less than $q-\Delta$. The same bounded root list computes this inverse. If $m=1$, the first factor of \eqref{eq:product-labels} is the one-element set of empty vectors. The prescription still applies.
\end{proof}

An efficient test for $F(z)\ne0$ alone would not provide this decoder. The coordinate change ensures a nonzero polynomial in every fiber, and the trace splitting lists only its bounded set of forbidden entries.

\section{The original strict constructions}\label{sec:original-statements}

Set
\begin{align}\label{eq:original-parameters}
&s\ge3,\quad d\ge3,\quad 1\le r\le d-2,\quad m=d-1-r,\quad D_0=\binom d2,\notag\\
&c=r+d-2,\quad M=s+(m-1)\binom s2,\quad L=cM,\notag\\
&a_0=s(s+c-1),\quad \Delta_0=s^{a_0},\quad B_0=\Delta_0+1,\quad h_0=B_0^L.
\end{align}
Here $c$ is the number of polynomial outputs, $M$ the number of interpolation features per output, and $L$ the total number of coefficient slots. Features that vanish or become redundant in characteristic two are still counted as slots. The integer $a_0$ counts equations in a closed witness model, and $\Delta_0$ bounds its cumulative degree.

\begin{theorem}[Direct construction]\label{thm:direct}
If $rs>D_0$, then for every integer $h\ge h_0$ there is a deterministically specified graph on $(\F_{2^h})^m$ with
\[
|V|=2^{hm},\qquad \omega<s,\qquad \alpha\le d^h.
\]
The bounds hold for every total ordering in the adjacency rule. For fixed $s,d,r$, field initialization and adjacency take deterministic time polynomial in $h$. The coefficient prescription requires neither elimination nor a search for a suitable graph.
\end{theorem}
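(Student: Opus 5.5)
The plan is to run the original (non-critical, $b=1$) model of Section~\ref{sec:rank} with a fixed coefficient prescription, and to exclude cliques by deleting no vertices at all: the strict inequality $rs>D_0$ should make the coefficient projection of the clique incidence proper. Labels are $z\in(\F_{2^h})^m$, with $p_i(z)=z_i$ for $i\le m$. The $c=r+d-2$ remaining outputs are compressed polynomials \eqref{eq:compressed-features} with coefficients $A\in\F_{2^h}^L$, and adjacency is given by \eqref{eq:adjacency}. Lemma~\ref{lem:rank} gives $\alpha\le d^h$ for every assignment and every order, over $\F_{2^h}$ itself. The vertex count $2^{hm}$ is immediate, since copied coordinates make labels distinct. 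The remaining task is to choose $A$ deterministically, without elimination, so that no $s$ distinct labels carry an ordered clique.

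First, Theorem~\ref{thm:exact-dimension} and Lemma~\ref{lem:universal-dimension}, in the form \eqref{eq:original-dimension}, give $\dim\Ical\le L+D_0-rs<L$. Hence the closure $Y$ of the projection of $\Ical$ to $\A^L$ is proper, and this holds for every order, because $\Ccal_{s,d}$ records all orderings of a clique. Next I would write down a closed witness model for $\Ical$:
\begin{itemize}
\item $\binom s2$ incidence equations \eqref{eq:kernel}, each of degree at most $2s$;
\item inverse-witness equations \eqref{eq:inverse-witness} for label distinctness.
\end{itemize}
The witnesses are needed so that the model is closed with bounded degree, and its projection then equals the image of $\Ical$. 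There are $a_0=s(s+c-1)$ equations in total, and each has degree at most $s$ once the substitution is counted appropriately. I expect to have to recheck this tally: \eqref{eq:kernel} naively has degree $2s$, so either degree is measured in a way that gives $s$, or $\Delta_0$ has to absorb the extra factor. Cumulative B\'ezout (Lemma~\ref{lem:degree}) bounds the degree by $\Delta_0=s^{a_0}$. Linear projection does not increase it. Part 3 of Lemma~\ref{lem:degree} then gives a nonzero $R(A)$ of degree at most $\Delta_0$ vanishing on $Y$, and part 4 lets us take $R$ over $\F_2$.

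To avoid computing $R$, which is the elimination-free claim, I would use a combinatorial nullstellensatz argument. Let $G\subset\F_{2^h}$ be a fixed set of $B_0=\Delta_0+1$ distinct elements. A nonzero polynomial of degree at most $\Delta_0$ cannot vanish on the whole grid $G^L$. So at least one $A\in G^L$ lies outside $Y$, but we do not know which one. The hypothesis $h\ge h_0=B_0^L$ suggests the intended trick: pass to a large extension rather than searching. I would take $A$ to be the $B_0^L$ grid points packaged as a single point over a field where all conjugates are distinct, and interpolate through them. The cleaner route is this. Choose an element $\theta$ generating a subfield of degree at least $B_0^L$, so that $\F_2(\theta)$ has degree at least $h_0$ and $h\ge h_0$ guarantees room. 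Then set $A_i=\theta^{B_0^{i}}$ (a Kronecker substitution). A nonzero $R\in\F_2[A]$ of degree at most $\Delta_0<B_0$ in each variable maps to a nonzero univariate polynomial in $\theta$ of degree less than $B_0^L\le h$. It cannot vanish at an element whose minimal polynomial over $\F_2$ has degree at least $h$. So $A\notin Y$ for every $R$ simultaneously, with no knowledge of $R$.

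The main obstacle is making this last step exact. It needs $\theta\in\F_{2^h}$ of degree exactly $h$ over $\F_2$, and a root of the irreducible defining polynomial (Shoup) serves. It also needs the degree bookkeeping to show $\deg_\theta R(\theta^{B_0^0},\ldots,\theta^{B_0^{L-1}})<B_0^L\le h$, which uses per-variable degree at most $\Delta_0$ and injectivity of base-$B_0$ expansion. Once $A\notin Y$, no ordered tuple of distinct labels forms a clique, so $\omega<s$. Adjacency is evaluated by computing the $\theta$-powers by repeated squaring, the features, and $K$, all in time polynomial in $h$ for fixed $s,d,r$.
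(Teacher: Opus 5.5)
Your proposal is correct and follows the paper's proof: a closed witness model, then cumulative B\'ezout, linear projection and Frobenius descent to obtain a nonzero $Q_A\in\F_2[A]$ of degree at most $\Delta_0$ vanishing on the bad coefficient image, and finally the Kronecker point $A_j=\theta^{B_0^j}$, with $\theta$ the class of $T$ modulo an irreducible $f_h$ of degree $h\ge h_0=B_0^L$. The tally you flagged is resolved in the paper by keeping the $s(2d-3)$ chart coordinates as variables: the $\binom s2$ incidences then have degree $2$, and the $cs$ equations missing from your list are the graph equations setting each output coordinate equal to its feature combination (degree at most $s$), which gives $a_0=2\binom s2+cs$ equations of degree at most $s$; even your substituted system only gives $\cdeg\le(4s)^{\binom s2}\le s^{a_0}$, so the stated $h_0$ suffices either way.
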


\begin{theorem}[Marked deletion]\label{thm:marked}
If $r(s-1)>\binom{d-1}2$, then for every $h\ge h_0$, with $q=2^h$, there is a deterministically specified graph with
\[
|V|=q^{m-1}(q-\Delta_0),\qquad \omega<s,\qquad \alpha\le d^h.
\]
Its labels form an explicitly indexed product set. For fixed $s,d,r$, decoding and adjacency take deterministic time polynomial in $h$. Finite algebraic preprocessing depends only on $s,d,r$; no efficient bound in those parameters is asserted.
\end{theorem}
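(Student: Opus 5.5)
The statement to be proved is Theorem~\ref{thm:marked}, the strict marked-deletion construction. Its proof should not use diagonal separation. Instead it uses the one-label projection together with an elimination-free degree bound.

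\emph{Dimension.} Put $b=1$ and $\nu=2d-3$, with $m=d-1-r$ copied coordinates. Theorem~\ref{thm:exact-dimension} gives codimension $\kappa_{s,d}=s(d-1)-\binom d2$. Lemma~\ref{lem:universal-dimension} then yields \eqref{eq:original-dimension}, namely $\dim\Ical\le L+\binom d2-rs$. Let $\pi$ be the projection $(A,z_1,\ldots,z_s)\mapsto(A,z_1)$ to $\A^{L+m}$. Its image closure $Y$ has dimension at most $L+\binom d2-rs$. The hypothesis $r(s-1)>\binom{d-1}2$ is equivalent to $\binom d2-rs<m$, so $Y$ is a proper subset.

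\emph{Degree and descent.} Model the distinct-label incidence by a closed system with inverse witnesses \eqref{eq:inverse-witness}. This system has $\binom s2$ kernel equations and the witness equations, $a_0=s(s+c-1)$ equations in all counted generously. Each has degree at most $s$ after substitution: features have label degree at most $s-1$, coefficients contribute one more, and a witness equation is bilinear. Cumulative Bézout \eqref{eq:bezout} then bounds its cumulative degree by $\Delta_0=s^{a_0}$. The projection onto the $(A,z_1)$ coordinates is linear, so parts (2) and (3) of Lemma~\ref{lem:degree} give a nonzero $Q(A,Z)$ of degree at most $\Delta_0$ vanishing on $Y$. By part (4) we may take $Q$ over $\F_2$, since everything is defined over the prime field.

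\emph{Specialization without elimination.} This is the main obstacle. We cannot compute $Q$, yet the theorem requires a deterministic coefficient prescription. My plan is to choose a value $a$ with $Q(a,Z)\not\equiv0$ for \emph{every} nonzero $Q$ of degree $\le\Delta_0$. One choice is a grid point in a subfield: take $a_i$ from a set $S\subseteq\F_q$ of size $B_0=\Delta_0+1$ that is algebraically independent enough. Concretely, pick $a=(\gamma,\gamma^{B_0},\gamma^{B_0^2},\ldots)$ with $\gamma$ a generator of an extension of degree at least $B_0^L$ over $\F_2$, inside $\F_{2^h}$. Kronecker substitution shows that a nonzero binary polynomial of degree $\le\Delta_0$ in each variable maps to a nonzero univariate polynomial of degree $<B_0^L\le h$. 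That univariate polynomial cannot vanish at an element of degree $h$. This explains the threshold $h\ge h_0=B_0^L$, and I would attempt this route first.

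\emph{Graph.} Apply Lemma~\ref{lem:decoder} to the nonzero polynomial $Q(a,Z)$, which has degree at most $\Delta_0$. This yields an indexed set of $q^{m-1}(q-\Delta_0)$ labels on which $Q(a,\cdot)\ne0$. Use the adjacency rule \eqref{eq:adjacency} on this set. Any ordered $s$-clique would give a point of $\Ical$ whose first label $z_1$ satisfies $Q(a,z_1)=0$, a contradiction. Hence $\omega<s$. Lemma~\ref{lem:rank}, which holds under restriction, gives $\alpha\le d^h$. Decoding and adjacency are polynomial in $h$ by Lemma~\ref{lem:decoder} and field arithmetic, and preprocessing consists only of fixing the features and computing $\gamma$ via Shoup's algorithm.
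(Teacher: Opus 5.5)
\paragraph{Gap: you need $Q$ explicitly.}
Your outline follows the paper's route. The count \eqref{eq:original-dimension} makes the one-label image proper, the witness model has cumulative degree at most $\Delta_0$, projection and descent give $Q\in\F_2[A,Z]$, the Kronecker prescription \eqref{eq:kronecker} keeps it nonzero, and Lemma~\ref{lem:decoder} indexes the complement. The gap is your claim that $Q$ cannot be computed, so that preprocessing consists only of fixing the features and computing $\gamma$.

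The vertex set is (a decodable part of) $\{z: Q(a,z)\ne0\}$, so it is not determined until a particular $Q$ is fixed. Lemma~\ref{lem:decoder} also takes $F=Q(a,Z)$ as explicit input. Its coordinate change uses the top homogeneous part of $F$, and its indexing and membership tests compute $\gcd(f(X),X^q-X)$ for each fiber polynomial and split it by trace tests. Without $Q$ in hand you have neither a deterministically specified graph nor the polynomial-time decoding and adjacency that the statement asserts. The prescription \eqref{eq:kronecker} avoids knowing the exceptional polynomial only when nothing is deleted. That is the direct construction, Theorem~\ref{thm:direct}, where $Q_A$ certifies correctness but is never evaluated. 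In the marked construction the deletion itself evaluates $Q$.

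\paragraph{The missing preprocessing step.}
The fix is the finite preprocessing the statement explicitly allows:
\begin{enumerate}
\item Enumerate the nonzero binary polynomials of total degree at most $\Delta_0$ in a fixed order.
\item For each candidate, append $vQ(A,z_1)-1=0$ to the witness system.
\item Test with a Gr\"obner basis over $\F_2$ whether the ideal contains $1$. By the Nullstellensatz this holds exactly when $Q$ vanishes on the image of the distinct-label incidence.
\item Accept the first candidate that passes.
\end{enumerate}
Parts (2)--(4) of Lemma~\ref{lem:degree} guarantee that an acceptable candidate exists. So the search terminates, and it depends only on $s,d,r$, not on $h$.

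Then apply your Kronecker argument to a nonzero coefficient of $Q$ viewed as a polynomial in $Z$. This gives $Q(a,Z)\ne0$ of degree at most $\Delta_0$, and the rest of your argument goes through: the decoder, the clique exclusion through the first label, and Lemma~\ref{lem:rank} under restriction. Once $Q$ is known, Lemma~\ref{lem:adaptive} could replace the Kronecker prescription, as in Corollary~\ref{cor:adaptive-marked}.

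\paragraph{A smaller slip.}
After substitution, the kernel equations \eqref{eq:kernel} have degree up to $2s$, not $s$, since they multiply a slope output by a point-coordinate difference. The bound $\Delta_0=s^{a_0}$ is still correct. The cleanest way to see this is the paper's model, which keeps the $s(2d-3)$ flag coordinates as variables. There the incidences and witness equations have degree two, and only the $cs$ graph equations have degree up to $s$.
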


The vertex count is positive, since $h_0>\Delta_0$ and $q>2\Delta_0$. The deletion argument first produces a polynomial complement; the decoder selects exactly $q-\Delta_0$ labels in every one of $q^{m-1}$ fibers. The normalization of one point coordinate absorbs the all-ones matrix into an existing rank feature, giving $d^h$ rather than $d^h+1$.

\begin{corollary}\label{cor:original-ramsey}
Under either theorem's hypotheses, explicit witnessing graphs give
\[
R(s,t)\ge\Omega_{s,d,r}\bigl(t^{(d-1-r)/\log_2 d}\bigr).
\]
Taking $d=s$ and $r=\lceil s/2\rceil$ in the direct construction gives exponent $(\lfloor s/2\rfloor-1)/\log_2 s$ for $s\ge4$. Taking $d=s$ and $r=\lfloor s/2\rfloor$ in the marked construction gives exponent $(\lceil s/2\rceil-1)/\log_2 s$ for $s\ge3$.
\end{corollary}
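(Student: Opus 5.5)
The corollary has two parts. The first is the Ramsey bound: it follows from either theorem together with Lemma~\ref{lem:extraction}. The second part consists of the two explicit parameter choices, which need only arithmetic checks. For the extraction, take $b=1$ and $\theta=d$, and take the progression step $g=1$ for all $h\ge h_0$.

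In the direct case, Theorem~\ref{thm:direct} supplies $K_s$-free graphs with $|V|=2^{hm}$ and $\alpha\le d^h$. The hypotheses of Lemma~\ref{lem:extraction} then hold with $c_0=1$, $C=1$ and $m=d-1-r$.

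In the marked case, Theorem~\ref{thm:marked} gives $|V|=q^{m-1}(q-\Delta_0)$. For $h\ge h_0$ we have $q>2\Delta_0$, so $|V|\ge\tfrac12 2^{hm}$, and we can take $c_0=1/2$. Lemma~\ref{lem:extraction} then gives the exponent $m/\log_2 d=(d-1-r)/\log_2 d$ in both cases. The implied constant depends only on $s,d,r$, through $h_0$ and $\Delta_0$. There is one check here: the construction must exist for every sufficiently large $h$. The theorems give this for all $h\ge h_0$. We also need $d^h\le t-1$ to have an admissible $h$, which holds once $t$ is large.

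For the specific choices, set $d=s$. In the direct construction we need $rs>\binom s2$, which is equivalent to $r>(s-1)/2$. The choice $r=\lceil s/2\rceil$ satisfies this. We also need $1\le r\le d-2=s-2$, which holds for $s\ge4$. The resulting exponent is
\[
\frac{s-1-\lceil s/2\rceil}{\log_2 s}=\frac{\lfloor s/2\rfloor-1}{\log_2 s}.
\]

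In the marked construction we need $r(s-1)>\binom{s-1}2$, which is equivalent to $r>(s-2)/2$. The choice $r=\lfloor s/2\rfloor$ satisfies this. We also have $r\ge1$ for $s\ge3$ and $r\le s-2$ for $s\ge3$; at $s=3$ this reads $1\le1$. The resulting exponent is
\[
\frac{s-1-\lfloor s/2\rfloor}{\log_2 s}=\frac{\lceil s/2\rceil-1}{\log_2 s}.
\]

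The main obstacle is only bookkeeping. For the marked vertex count, $c_0$ has to be a genuine constant. The constraint $m=d-1-r\ge1$ is automatic from $r\le d-2$. No other step is substantive, since the theorems themselves are taken as given.
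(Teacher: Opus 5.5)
Your proposal is correct and follows essentially the paper's argument. The paper chooses $h=\lfloor\log_d(t-1)\rfloor$ directly, which is exactly the instance $b=1$, $\theta=d$, $g=1$ of Lemma~\ref{lem:extraction} that you invoke; it likewise uses $q>2\Delta_0$ to lose at most a factor two in the marked case and checks admissibility by the same arithmetic, computing the marked slack $r(s-1)-\binom{s-1}2$ as $2u-1$ or $u$ for $s=2u$ or $s=2u+1$.
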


Here $f(t)\ge\Omega_{s,d,r}(g(t))$ means $f(t)\ge Cg(t)$ for every sufficiently large $t$, for some positive constant depending only on the displayed fixed parameters.

\subsection{Proofs}\label{sec:strict}
\subsubsection{A closed witness model}
Use the parameters \eqref{eq:original-parameters}. Take the $L$ coefficients and the $2d-3$ chart coordinates of each of the $s$ flags. At position $j$, let $z_j$ be the first $m$ point coordinates. Add the distinctness witnesses \eqref{eq:inverse-witness} for every pair. The closed set $W$ is defined by the following equations: $\binom s2$ clique incidences of degree at most two; $\binom s2$ distinctness equations of degree two; and $cs$ polynomial-graph equations equating output coordinates with their compressed feature combinations. The latter have total degree at most $s$, including coefficients. Thus there are
\[
2\binom s2+cs=s(s+c-1)=a_0
\]
equations, each of degree at most $s$, and $\cdeg W\le s^{a_0}=\Delta_0$.

Forgetting the witnesses maps $W$ onto the distinct-label coefficient--configuration incidence $\Ical$ on algebraically closed points. They have the same projected images in coefficients alone, or coefficients and the first label. The projection and descent assertions of Lemma~\ref{lem:degree} therefore give
\begin{align}\label{eq:exceptional-original}
&0\ne Q_A(A)\in\F_2[A],\quad\deg Q_A\le\Delta_0
&&\text{if }rs>\binom d2,\notag\\
&0\ne Q(A,Z)\in\F_2[A,Z],\quad\deg Q\le\Delta_0
&&\text{if }r(s-1)>\binom{d-1}2.
\end{align}
The first vanishes on every bad coefficient tuple, and the second on every bad tuple with its first label. Properness comes from \eqref{eq:original-dimension}; witness dimensions enter only the degree bound.

\subsection{Bounded-degree avoidance without elimination}
Choose an irreducible $f_h(T)\in\F_2[T]$ of degree $h\ge h_0$, and represent
\[
\F_q=\F_2[T]/(f_h(T)),\qquad\theta=T\pmod{f_h(T)}.
\]
The element $\theta$ has minimal-polynomial degree $h$. It need not generate the multiplicative group. In the fixed coefficient order prescribe
\begin{equation}\label{eq:kronecker}
A_j=\theta^{B_0^j}\qquad(0\le j<L).
\end{equation}

\begin{lemma}\label{lem:kronecker}
No nonzero polynomial in $\F_2[A_0,\ldots,A_{L-1}]$ of total degree at most $\Delta_0$ vanishes at \eqref{eq:kronecker}.
\end{lemma}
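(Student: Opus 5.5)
Substituting $A_j=\theta^{B_0^j}$ turns a polynomial $Q$ of degree at most $\Delta_0$ into a univariate polynomial in $\theta$. The plan is to show that this univariate polynomial is nonzero and has degree less than $h$. Since $\theta$ has minimal polynomial of degree $h$ over $\F_2$, no nonzero binary polynomial of degree below $h$ vanishes at $\theta$.

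Concretely, let $Q=\sum_\gamma c_\gamma A^\gamma$ with $c_\gamma\in\F_2$ and $|\gamma|\le\Delta_0$. Then
\[
Q\bigl(\theta^{B_0^0},\ldots,\theta^{B_0^{L-1}}\bigr)=\sum_\gamma c_\gamma\,\theta^{N(\gamma)},\qquad N(\gamma)=\sum_{j=0}^{L-1}\gamma_jB_0^j .
\]
The steps are as follows.

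\begin{enumerate}
\item Each exponent satisfies $\gamma_j\le|\gamma|\le\Delta_0<B_0$. So $N(\gamma)$ is the integer whose base-$B_0$ digits are $\gamma_0,\ldots,\gamma_{L-1}$.
\item By uniqueness of base-$B_0$ expansions, the map $\gamma\mapsto N(\gamma)$ is injective on the monomials of $Q$. Distinct monomials therefore give distinct powers of $\theta$, and the binary polynomial $g(T)=\sum_\gamma c_\gamma T^{N(\gamma)}$ is nonzero, because no cancellation occurs.
\item The degree satisfies $\deg g\le\max_\gamma N(\gamma)\le\Delta_0\sum_{j<L}B_0^j<B_0^L=h_0\le h$.
\item The minimal polynomial of $\theta$ over $\F_2$ is $f_h$, of degree $h$. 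If $g(\theta)=0$ then $f_h$ divides the nonzero polynomial $g$, which has smaller degree. This is impossible, so $Q$ does not vanish at the point \eqref{eq:kronecker}.
\end{enumerate}

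The only point requiring care is the degree estimate in step 3. It is the reason $B_0=\Delta_0+1$ and $h_0=B_0^L$ were chosen. The rest is the standard Kronecker substitution, with no cancellation over $\F_2$ because the exponents are distinct.
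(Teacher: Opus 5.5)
Your proof is correct and follows the paper's argument: the Kronecker substitution with base-$B_0$ digits rules out cancellation, and the resulting nonzero binary polynomial has degree below $B_0^L=h_0\le h$, so it cannot vanish at $\theta$, whose minimal polynomial over $\F_2$ has degree $h$. The only difference is the degree estimate: you bound each digit separately to get $\Delta_0\sum_{j<L}B_0^j=B_0^L-1$, while the paper uses the total-degree bound $\sum_j\gamma_j\le\Delta_0$ to get $\Delta_0B_0^{L-1}$; either bound suffices.
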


\begin{proof}
First substitute into a formal indeterminate:
\[
\widehat Q(T)=Q(T,T^{B_0},\ldots,T^{B_0^{L-1}}).
\]
A monomial $\prod_jA_j^{e_j}$ becomes $T^{\sum_je_jB_0^j}$. Since $\sum_je_j\le\Delta_0=B_0-1$, each $e_j$ is a base-$B_0$ digit. Distinct monomials give distinct exponents, so their coefficients cannot cancel and $\widehat Q\ne0$. Also
\[
\deg\widehat Q\le\Delta_0B_0^{L-1}<B_0^L=h_0\le h.
\]
The minimal-polynomial property gives $\widehat Q(\theta)\ne0$, which is the required value.
\end{proof}

\begin{proof}[Proof of Theorem~\ref{thm:direct}]
Use \eqref{eq:kronecker} and every label in $\F_q^m$. Copied coordinates recover each label, giving $q^m=2^{hm}$ vertices. Lemma~\ref{lem:rank} gives $\alpha\le d^h$. An ordered $s$-clique would put the coefficient tuple in the bad image and make $Q_A(A)=0$ in \eqref{eq:exceptional-original}, contrary to Lemma~\ref{lem:kronecker}. This excludes the algebraic ordered configurations independently of the chosen total order.

Generate $f_h$ by \cite{Shoup}, or use any supplied irreducible polynomial. Compute $A_0=\theta$ and $A_{j+1}=A_j^{B_0}$. For a label, form $\ell_T(z)$ over $\F_q$, compute its successive powers through $s-1$, and read the coefficient features. Evaluate the output combinations and \eqref{eq:kernel}. No expansion into all multivariate monomials is needed. With $s,d,r$ fixed, the number of operations and the bit length of $B_0$ are constant in $h$, and every field operation has polynomial bit cost. The exceptional polynomial proves correctness but is never computed by this algorithm.
\end{proof}

For definiteness use lexicographic order on binary polynomial-basis labels. The degree bound in \eqref{eq:exceptional-original} is independent of $h$, because the model is written over $\F_2$ before choosing the finite field. No equations $z^q-z=0$ enter the geometric argument.

\subsection{Marked deletion and preprocessing}
To compute $Q(A,Z)$, enumerate all nonzero prime-field polynomials of total degree at most $\Delta_0$, using a fixed monomial order and a fixed order of binary coefficient vectors. For each candidate append $vQ(A,z_1)-1=0$ to the witness equations for $W$. Accept the first candidate whose enlarged ideal contains $1$. The Nullstellensatz test described in Section~\ref{sec:degree-model} decides each candidate, and \eqref{eq:exceptional-original} guarantees acceptance. All this preprocessing depends only on $s,d,r$, with no efficient uniform bound asserted.

Write $Q(A,Z)=\sum_\gamma R_\gamma(A)Z^\gamma$. At least one $R_\gamma$ is nonzero, and each has degree at most $\Delta_0$. Under \eqref{eq:kronecker}, Lemma~\ref{lem:kronecker} keeps every nonzero coefficient nonzero. Hence
\[
F(Z)=Q(A,Z)
\]
is a nonzero polynomial of degree at most $\Delta_0$. Retain labels with $F(z)\ne0$. An ordered $s$-clique among them would have $Q(A,z_1)=0$, a contradiction. Restriction preserves the rank bound.

A nonzero degree-$\delta$ polynomial in $m$ variables has at most $\delta q^{m-1}$ zeros. For completeness, write it with last-variable degree $e$. Its nonzero leading coefficient has degree at most $\delta-e$ in the other variables. Induction bounds the choices making that coefficient zero by $(\delta-e)q^{m-2}$; allow all $q$ last coordinates there. On other fibers allow at most $e$ roots. The total is at most $(\delta-e)q^{m-1}+eq^{m-1}$. The case $m=1$ is the root bound, and a nonzero constant has no zeros. Thus deletion retains at least $q^m-\Delta_0q^{m-1}$ labels. Lemma~\ref{lem:decoder} selects exactly that many in a decodable product set. It proves the indexing and complexity assertions of Theorem~\ref{thm:marked}.

\begin{corollary}[Computed-polynomial specialization]\label{cor:adaptive-marked}
The conclusions of Theorem~\ref{thm:marked} hold already for
\[
h\ge\left\lceil\log_2(2\Delta_0+1)\right\rceil
\]
if the computed marked polynomial is specialized by Lemma~\ref{lem:adaptive}. The same replacement is available for the direct construction after computing $Q_A$, but then it uses algebraic preprocessing.
\end{corollary}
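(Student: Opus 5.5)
The plan is to keep the marked-deletion argument of Theorem~\ref{thm:marked} verbatim except for one step: choosing the coefficients. Instead of the Kronecker prescription \eqref{eq:kronecker}, which is what forces $h\ge h_0=B_0^L$, we specialize the computed polynomial $Q(A,Z)$ using Lemma~\ref{lem:adaptive}. The preprocessing is unchanged: enumerate binary candidates of degree at most $\Delta_0$ and accept the first one for which the Nullstellensatz test succeeds, as in the marked-deletion subsection. This produces $0\ne Q\in\F_2[A,Z]$ with $\deg Q\le\Delta_0$, vanishing on every bad coefficient--first-label pair. Its existence comes from \eqref{eq:exceptional-original}. Neither this nor the test depends on $h$.

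The steps are as follows.
\begin{enumerate}
\item Write $Q=\sum_\gamma R_\gamma(A)Z^\gamma$ and take the first nonzero $R_\gamma$ in a fixed monomial order. It has total degree at most $\Delta_0$.
\item Since $q=2^h\ge 2\Delta_0+1>\Delta_0$, Lemma~\ref{lem:adaptive} deterministically finds $a\in\F_q^L$ with $R_\gamma(a)\ne0$. It tests at most $\Delta_0+1$ fixed field elements per coordinate. For fixed $s,d,r$ this costs polynomially many field operations in $h$.
\item Consequently $F(Z)=Q(a,Z)$ is a nonzero polynomial over $\F_q$ of degree at most $\Delta_0$.
\item Clique exclusion is as before. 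An ordered $s$-clique on labels with $F(z)\ne0$ gives a point of $\Ical$ over $\F_q\subseteq\overline{\F}_2$ with coefficient tuple $a$. Hence $Q(a,z_1)=0$, a contradiction.
\item The independence bound $\alpha\le d^h$ follows from Lemma~\ref{lem:rank}, which holds for every assignment and survives restriction of the label set.
\end{enumerate}

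For counting and indexing, the zero bound in the marked subsection leaves at least $q^m-\Delta_0q^{m-1}$ labels. Lemma~\ref{lem:decoder} needs only $q>\Delta_0$, and it selects exactly $q^{m-1}(q-\Delta_0)$ of them in a decodable product set. The hypothesis $q>2\Delta_0$ additionally guarantees that at least half the labels survive, matching the remark after Theorem~\ref{thm:marked}. The vertex count is therefore positive and of the same order as before. Decoding, membership testing and adjacency are carried out as in Theorem~\ref{thm:marked}. The only difference is that the coefficient vector $a$ is now computed once at initialization, rather than obtained by powering $\theta$.

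For the direct construction, apply the same specialization to the computed $Q_A$. This requires running the elimination and enumeration preprocessing for $Q_A$, which is exactly the caveat in the statement.

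The main obstacle is checking that nothing else in the proof of Theorem~\ref{thm:marked} secretly used $h\ge h_0$. It does not. The degree bound $\Delta_0$ is independent of $h$, because the model is written over $\F_2$ first. The field is obtained from Shoup's algorithm for any $h$. The decoder's requirement $q>\Delta_0$ is implied by the new threshold.
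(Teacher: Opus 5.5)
Your proposal is correct and follows the paper's proof. You choose a nonzero coefficient $R_\gamma$ of the computed $Q$ and specialize it by Lemma~\ref{lem:adaptive}, which needs only $q\ge 2\Delta_0+1$. You then rerun the deletion, rank and decoder arguments of Theorem~\ref{thm:marked} unchanged, and handle the direct variant by applying the lemma to $Q_A$; your explicit check that no other step used $h\ge h_0$ makes precise what the paper's short proof leaves implicit.
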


\begin{proof}
Choose a nonzero $R_\gamma$ and apply Lemma~\ref{lem:adaptive}. This keeps $F$ nonzero over a field with $q>2\Delta_0$. The preceding deletion and decoder arguments are unchanged. For the direct variant, apply the lemma to $Q_A$ itself.
\end{proof}

\subsection{Original exponents and thresholds}\label{sec:original-exponents}
For Corollary~\ref{cor:original-ramsey}, choose $h=\lfloor\log_d(t-1)\rfloor$. If $t\ge d^{h_0}+1$, then $h\ge h_0$ and $\alpha\le d^h<t$. The direct graph has
\[
|V|=2^{hm}\ge2^{-m}(t-1)^{m/\log_2d}.
\]
The marked graph has at least half as many vertices, since $q>2\Delta_0$. This gives the claimed bound for every sufficiently large integer $t$.

For $d=s$, $r=\lceil s/2\rceil$ satisfies the direct condition when $s\ge4$, and $m=\lfloor s/2\rfloor-1$. In the marked case take $r=\lfloor s/2\rfloor$ and $m=\lceil s/2\rceil-1$. For $s=2u$, the difference $r(s-1)-\binom{s-1}2$ is $2u-1$; for $s=2u+1$ it is $u$. These are positive for the stated range.

Choosing the least admissible integer $r$ gives the optimized original exponents
\begin{align}\label{eq:old-optimized}
E_{\rm dir}(s)&=\max_d\frac{d-2-\lfloor d(d-1)/(2s)\rfloor}{\log_2d},\notag\\
E_{\rm mark}(s)&=\max_d\frac{d-2-\lfloor(d-1)(d-2)/(2(s-1))\rfloor}{\log_2d}.
\end{align}
Only $1\le r\le d-2$ enter these maxima. Admissibility implies $d<2s$ in the direct case and $d<2s-1$ in the marked case, so the ranges are finite. There is no admissible direct triple at $s=3$.

The marked inequality gives
\[
m<\frac{(d-1)(s-d/2)}{s-1}\le\frac s2.
\]
The last inequality is equivalent to $(d-s)(d-s-1)\ge0$, valid for integer $d$. Since $d\ge m+2$ and $x/\log_2(x+2)$ is increasing for $x>0$,
\[
\frac m{\log_2d}\le\frac{s/2}{\log_2(s/2+2)}.
\]
Together with $d=s$ this proves $E_{\rm mark}(s)=(1/2+o(1))s/\log_2s$. Every direct-admissible choice is marked-admissible, and the direct choice $d=s$ supplies the same leading asymptotic. This limits these sufficient-condition calculations, not all algebraic graph constructions.

For $d=s$ in the elimination-free direct prescription,
\[
M=O(s^3),\quad L=O(s^4),\quad a_0=O(s^2),\quad\log_2\Delta_0=O(s^2\log s).
\]
Thus $h_0=2^{O(s^6\log s)}$ and a sufficient Ramsey threshold is $2^{2^{O(s^6\log s)}}$. If $s\to\infty$ and $s^6\log s=o(\log\log t)$, the direct graph gives $R(s,t)\ge t^{(1/2-o(1))s/\log_2s}$. The improved range proved below uses computed polynomials and does not change this elimination-free prescription.

\section{Quantitative availability through finite marked fibers}\label{sec:fibers}
Diagonal separation proves the endpoint without a prescribed separating degree. A different edge deletion gives explicit constants. It first bounds the number of forbidden tuples through each retained vertex and then colors their conflict graph. The polynomial used to retain vertices has a degree bound coming directly from the substituted clique equations.

\begin{lemma}[Finite marked fibers]\label{lem:finite-fibers}
Let $X\subseteq\A^L\times(\A^m)^s$ be a reduced prime-field algebraic set with
\[
\dim X\le L+m,\qquad\cdeg X\le D.
\]
There is a nonzero $Q(A,Z)\in\F_2[A,Z]$ of degree at most
\begin{equation}\label{eq:fiber-Delta}
\Delta=s m(s-1)D
\end{equation}
such that, when $Q(a,z)\ne0$, the fiber of $X$ fixing $A=a$ and any one marked label to $z$ has at most $D$ points. The polynomial is computable by finite deterministic preprocessing.
\end{lemma}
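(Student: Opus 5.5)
The plan is to treat each marked position $i\in\{1,\ldots,s\}$ separately. For each $i$ I would build a product of leading coefficients that controls every coordinate of the other labels, and then multiply these over $i$. The degree bound \eqref{eq:fiber-Delta} has exactly the shape this produces: $s$ choices of the marked position, $s-1$ other positions, $m$ coordinates each, and one factor of degree at most $D$ per triple.

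\emph{Step 1: one polynomial for each triple.} Fix $i$, a position $j\ne i$ and a coordinate $1\le\ell\le m$. Consider the linear projection
\[
\pi_{ij\ell}:X\to\A^L\times\A^m\times\A^1,\qquad (A,z_1,\ldots,z_s)\mapsto(A,z_i,z_{j,\ell}).
\]
Let $Y_{ij\ell}=\overline{\pi_{ij\ell}(X)}$. Then $\dim Y_{ij\ell}\le\dim X\le L+m<L+m+1$, so $Y_{ij\ell}$ is a proper subset of the target. By Lemma~\ref{lem:degree}(2) we have $\cdeg Y_{ij\ell}\le\cdeg X\le D$. Lemma~\ref{lem:degree}(3) then gives a nonzero vanishing polynomial $G_{ij\ell}(A,Z,Y)$ of degree at most $D$. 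The set $X$ is defined over $\F_2$, and projections commute with Frobenius, so $Y_{ij\ell}$ is invariant under Frobenius and its inverse. Hence Lemma~\ref{lem:degree}(4) lets us take $G_{ij\ell}\in\F_2[A,Z,Y]$.

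\emph{Step 2: the leading coefficient.} Write $G_{ij\ell}=\sum_e c_{ij\ell,e}(A,Z)Y^e$ and let $c_{ij\ell}$ be the coefficient of the top power of $Y$ that actually occurs. If $G_{ij\ell}$ does not involve $Y$, then $c_{ij\ell}=G_{ij\ell}$. In either case $c_{ij\ell}$ is nonzero and has degree at most $D$. Put
\[
Q=\prod_i\prod_{j\ne i}\prod_{\ell}c_{ij\ell}.
\]
This $Q$ is nonzero, since the polynomial ring is a domain, and it has degree at most $sm(s-1)D=\Delta$.

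\emph{Step 3: the fiber is finite.} Suppose $Q(a,z)\ne0$ and fix the marked position $i$. Take any point of $X$ with $A=a$ and $z_i=z$. For each $j\ne i$ and each $\ell$, the coordinate $z_{j,\ell}$ is a root of the univariate polynomial $G_{ij\ell}(a,z,Y)$. This polynomial is nonzero because its leading coefficient $c_{ij\ell}(a,z)\ne0$, and if $G_{ij\ell}$ is free of $Y$ it is a nonzero constant, so the fiber is empty. A point of $X$ is determined by $A$ and its labels, so the fiber is finite. The fiber equals $X\cap V(A-a,\,Z_i-z)$, an intersection with linear equations. A finite reduced set has cumulative degree equal to its number of points. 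Cumulative B\'ezout, Lemma~\ref{lem:degree}(1), therefore bounds the number of points by $\cdeg X\le D$.

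\emph{Step 4: computing $Q$.} Each $G_{ij\ell}$ is found by the same terminating procedure as in Section~\ref{sec:degree-model}. We enumerate binary polynomials of degree at most $D$ in a fixed order. For each candidate we append $vG(A,z_i,z_{j,\ell})-1=0$ to a polynomial description of $X$ and test by Gr\"obner bases whether the resulting ideal contains $1$. The existence argument in Step~1 guarantees that some candidate is accepted.

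\emph{Main obstacle.} I expect the delicate point to be Step~3: the count $D$ must hold at every $(a,z)$ with $Q(a,z)\ne0$, not merely at generic points. The leading-coefficient device is what secures this. It gives finiteness pointwise, and it avoids having to show that the image closure captures the fibers over special points, since only vanishing of $G_{ij\ell}$ on actual points of $X$ is used. Once finiteness holds, the B\'ezout count applies without any genericity assumption.
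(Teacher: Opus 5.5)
Your proposal is correct and follows the paper's proof essentially step for step. Both arguments project $X$ to $(A,Z_i,Z_{j,\ell})$ and take an $\F_2$-rational vanishing polynomial of degree at most $D$ from Lemma~\ref{lem:degree}. Both then multiply the $sm(s-1)$ leading coefficients in the last variable. Where this product is nonzero, both confine each unmarked coordinate to finitely many roots and bound the finite fiber by cumulative B\'ezout applied to a linear section of $X$. The computation by enumeration and the Nullstellensatz test is also the same. Your explicit handling of Frobenius descent and of the $Y$-free case only spells out details the paper leaves implicit.
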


\begin{proof}
For every marked position $j$ and every scalar coordinate $Y$ in the other $s-1$ labels, project $X$ to $(A,Z_j,Y)$. Its closure is proper in the $(L+m+1)$-dimensional target and has cumulative degree at most $D$. Lemma~\ref{lem:degree} supplies a nonzero polynomial
\[
G_{j,Y}(A,Z,Y)=\sum_{u=0}^{u_{j,Y}}g_{j,Y,u}(A,Z)Y^u
\]
of degree at most $D$ vanishing on it. Choose its highest nonzero coefficient in $Y$ and take the product of those coefficients over all $s m(s-1)$ choices of $(j,Y)$. The polynomial ring is a domain, and each selected leading coefficient is nonzero, so their product is nonzero. A nonvanishing product at $(a,z)$ keeps every specialized coordinate equation nonzero simultaneously, for every marked position. Its degree is at most \eqref{eq:fiber-Delta}.

If $Q(a,z)\ne0$, each unmarked scalar coordinate of a point in the fiber is among finitely many roots of the corresponding nonzero univariate polynomial. When its degree is zero the fiber is empty. Otherwise the fiber lies in a finite Cartesian product and is zero-dimensional. It is a linear section of $X$, so its cumulative degree is at most $D$. A reduced zero-dimensional set has at most its degree many geometric points. These observations give the bound, including for nonreduced scheme-theoretic fibers.

Compute each $G_{j,Y}$ by enumerating binary polynomials of degree at most $D$ and testing vanishing on the incidence model. The degree and descent statements ensure termination. The leading coefficients and their product are then ordinary polynomial computations.
\end{proof}

Specialize $Q$ by Lemma~\ref{lem:adaptive} and use Lemma~\ref{lem:decoder}, with $q>2\Delta$, to obtain an indexed set $S$ of exactly $q^{m-1}(q-\Delta)$ labels. Form the auxiliary conflict graph on $S$ by joining every pair of vertices occurring together in an ordered $s$-clique of the original graph. Each fixed vertex can occur in any of $s$ marked positions, with at most $D$ tuples for each position. Its conflict degree is therefore at most
\begin{equation}\label{eq:Lambda}
\Lambda=s(s-1)D.
\end{equation}
Its neighbors can be listed locally. For each position, find the base-field roots of all the $G_{j,Y}(a,z,Y)$, enumerate their fixed-size Cartesian product, and test the clique equations, the fixed increasing order of the tuple, distinctness and membership in $S$. Discard tuples failing any test and remove repeated neighbors from the resulting list. Root finding and the membership test are supplied by the decoder algorithms. The enumeration size is a parameter-dependent constant, and the bit cost is polynomial in $h$ for fixed parameters.

\begin{lemma}[Local color reduction]\label{lem:color}
A graph of maximum degree at most $\Lambda$ on labels in $\F_{2^h}^m$, with a polynomial-time neighbor oracle for fixed $\Lambda,m$, has a deterministic locally computable proper coloring with at most
\begin{equation}\label{eq:palette}
C=(16(\Lambda+1))^{16}
\end{equation}
colors. The bit cost of computing a color is polynomial in $h$ for fixed $\Lambda,m$.
\end{lemma}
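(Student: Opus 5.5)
The plan is to use a deterministic Linial-type color reduction built from polynomials over a small prime-power field. The starting coloring is the label itself, an $mh$-bit string. Each round shrinks the palette from size $N$ to size roughly $O(\Lambda^2\log^2 N)$, and the rounds are iterated until the palette is bounded by a function of $\Lambda$ alone. A vertex's color at level $i$ depends only on the level-$(i-1)$ colors of itself and its neighbors. By induction, computing a color at level $i$ therefore queries the ball of radius $i$. Since $\Lambda$ is fixed, that ball has at most $(\Lambda+1)^i$ vertices, each listed by the neighbor oracle in polynomial time.

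One round goes as follows. Write the current color $c\in\{0,\ldots,N-1\}$ as the coefficient vector of a polynomial $f_c$ of degree less than $k$ over a field $\F_p$, with $p$ a prime power, $p^k\ge N$ and $p>\Lambda(k-1)$. Two distinct polynomials agree at fewer than $k$ points. Hence at most $\Lambda(k-1)<p$ evaluation points are spoiled by some neighbor, and vertex $v$ can take the new color $(x,f_{c(v)}(x))$ for the least $x\in\F_p$ with $f_{c(v)}(x)\ne f_{c(u)}(x)$ for all neighbors $u$. The result is a proper coloring with $p^2$ colors. For the first round, $N=2^{mh}$ and we take $k\approx\log N/\log\Lambda$ and $p=O(\Lambda\log N)$. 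This gives $O(\Lambda^2 (mh)^2)$ colors after one round, $O(\Lambda^2\log^2(\Lambda h))$ after the next, and so on. The number of rounds until the palette no longer shrinks is $O(\log^* h)$, which is at most $O(\log h)$. Each round costs polynomial in $h$ times the ball size.

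The main obstacle is keeping the locality blowup polynomial in $h$ and landing on the stated explicit palette $(16(\Lambda+1))^{16}$. The ball after $O(\log^* h)$ rounds has size $(\Lambda+1)^{O(\log^*h)}$, which is subpolynomial, so the cost is fine. For the constant, I would stop iterating once $N\le (16(\Lambda+1))^{16}$. At that point the palette must genuinely shrink with explicit parameters. I would choose $p$ as a power of two in $[2\Lambda k,4\Lambda k]$ and $k$ minimal with $p^k\ge N$, and then verify $p^2<N$ whenever $N>(16(\Lambda+1))^{16}$. With $k\le 16$ one gets $p\le 64\Lambda$, so $p^2\le 2^{12}\Lambda^2$, which is comfortably below the threshold. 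For larger $N$ the bound $p^2=O(\Lambda^2\log^2N)$ decreases $N$.

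It remains to check that this procedure has a fixed number of steps for fixed $\Lambda$ beyond the initial $O(\log^* h)$ phase, and that field arithmetic in $\F_p$ with $p$ a power of two of size $O(\Lambda h)$ is polynomial in $h$. The latter holds because a degree-$O(\log(\Lambda h))$ irreducible polynomial can be found by brute force in polynomial time. Finally, properness holds at every level by the choice of $x$. Every neighbor query uses the oracle plus membership in $S$, both available from the decoder, so the bit cost of a color is polynomial in $h$.
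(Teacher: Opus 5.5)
Your proposal is correct and is essentially the paper's argument: the label is the initial color, the old color is encoded as a low-degree polynomial over a binary field of size about $\Lambda\log N$, each vertex is recolored $(x,f(x))$ at the first point not spoiled by a neighbor, and the cost is controlled by the $(\Lambda+1)^{\mathrm{rounds}}$ recursive oracle calls. The paper takes degree $\lceil\log_2N\rceil$ and proves the explicit contraction $p^2\le16(\Lambda+1)^2(\log_2N)^2\le\sqrt N$ for $N\ge C$, which gives $O(\log h)$ rounds directly. Your deferred check reduces to the same inequality, since your minimal $k$ satisfies $k\le\lceil\log_2N\rceil$. You do need this quantitative contraction and not merely the strict decrease $p^2<N$, because strict decrease alone does not bound the number of rounds.
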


\begin{proof}
Start with the injective binary encoding of the labels, using the common palette bound $N_0=2^{mh}$. Suppose a proper coloring has palette bound $N\ge2$, and put $\ell=\lceil\log_2N\rceil$. Choose the smallest power of two $p>\Lambda\ell$ (or $p=2$ if necessary), so $p\le2(\Lambda\ell+1)$. Encode the old color as a polynomial $f$ over $\F_p$ of degree at most $\ell$, using its base-$p$ expansion. Distinct colors have distinct polynomials. At a vertex choose the first field element $x$ such that $f(x)$ differs from the value at $x$ of every neighbor's polynomial. The union of their root sets has size at most $\Lambda\ell<p$, so such an $x$ exists. Assign the new color $(x,f(x))$.

Two adjacent vertices assigned the same new color would have the same $x$ and equal values there, contrary to the choice at either endpoint. Thus the coloring is proper, and its palette bound is $p^2$. For $N\ge2$,
\[
p^2\le16(\Lambda+1)^2(\log_2N)^2.
\]
For $N\ge C$ this is at most $\sqrt N$. To check it at $C$, write $A=\Lambda+1\ge1$: the left side is $4096A^2(4+\log_2A)^2$, whereas $\sqrt C=(16A)^8$; the inequality follows already from $4+\log_2A\le4A$. The ratio $(\log_2N)^2/\sqrt N$ decreases throughout this range. Iterate until the palette bound is at most $C$. There are $O(\log h)$ rounds for fixed $m,\Lambda$.

Each round uses only the previous colors of a vertex and its neighbors. Recursively computing these colors explores at most $(\Lambda+1)^{O(\log h)}$ oracle calls, hence polynomially many for fixed $\Lambda$. The auxiliary fields have size polynomial in $h$; deterministic field initialization and their enumeration have the same cost. Each pair $(x,f(x))$ is encoded as an integer in the common palette of size $p^2$. Every vertex uses the same successive palette bounds and stops after the same number of rounds, determined from $N_0$, rather than from its own attained color. All rounding and ordering choices are fixed.
\end{proof}

Retain an original edge only when its endpoints have the same conflict color. An $s$-clique would be monochromatic, whereas any two of its vertices are adjacent in the properly colored conflict graph. Thus none remains. On an independent set, the vertices in each color class are independent in the original graph, so the independence estimate is multiplied by at most $C$. All color classes are retained; no global search for a largest class is required.

\begin{theorem}[Quantified critical construction]\label{thm:quantified}
For a critical interpolating flag family satisfying \eqref{eq:endpoint-condition}, let $e$ be the degree bound in Proposition~\ref{prop:substituted-degree} and put
\begin{equation}\label{eq:quantitative-constants}
D=(2e)^{b\binom s2},\qquad
\Delta=s m(s-1)D,\qquad\Lambda=s(s-1)D,\qquad C=(16(\Lambda+1))^{16}.
\end{equation}
For every $h$ with $\gcd(b,h)=1$ and $2^h>2\Delta$, finite deterministic preprocessing produces $K_s$-free graphs on $q^{m-1}(q-\Delta)$ vertices. Their independence number is at most $C d^{bh}$, or $C d^k\rho^{bh}$ for the two-stage family. Local access takes time polynomial in $h$ for fixed parameters.
\end{theorem}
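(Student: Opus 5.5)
The proof assembles the pieces of Appendix~\ref{sec:fibers} on top of the critical incidence bound. First, fix the scalar (or two-stage) family satisfying \eqref{eq:endpoint-condition}. Let $X$ be the closure of its distinct-label clique incidence, projected to $(A,z_1,\ldots,z_s)$. By Lemma~\ref{lem:universal-dimension}, or Proposition~\ref{prop:two-stage} in the two-stage case, together with \eqref{eq:scalar-incidence}, we get $\dim X\le L+ms-b\kappa_{s,d}\le L+m$. Proposition~\ref{prop:substituted-degree} bounds the cumulative degree of the closure in the full chart model by $D=(2e)^{b\binom s2}$. Projecting away the non-label chart coordinates keeps $\cdeg\le D$ by Lemma~\ref{lem:degree}(2). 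Frobenius invariance keeps $X$ defined over $\F_2$. Hence Lemma~\ref{lem:finite-fibers} applies and yields $Q$ of degree at most $\Delta=sm(s-1)D$. Each fiber with $A=a$ and one marked label $z$ then has at most $D$ points, and each such polynomial is found by terminating enumeration and Nullstellensatz tests.

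Second, specialize. Pick a nonzero coefficient $R_\gamma(A)$ of $Q$ and apply Lemma~\ref{lem:adaptive}, which needs only $q>\Delta$, to get $a$ with $Q(a,Z)\ne0$. Then apply Lemma~\ref{lem:decoder} with $q>2\Delta$, which gives positivity, to obtain the indexed set $S$ of exactly $q^{m-1}(q-\Delta)$ labels. The set $S$ lies inside $Q(a,\cdot)\ne0$. Order $S$ lexicographically and use the incidence adjacency over $E_q$, which is the field $\F_{2^{bh}}$ by Lemma~\ref{lem:scalars} since $\gcd(b,h)=1$. Every actual ordered $s$-clique lies in $X$ at $A=a$. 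This holds because distinct labels give distinct copied coordinates, hence distinct tuples in every split factor, as in Proposition~\ref{prop:two-stage}. Consequently each vertex lies in at most $\Lambda=s(s-1)D$ conflict pairs. The neighbor oracle is the one described before Lemma~\ref{lem:color}: root finding via the decoder, a bounded Cartesian product, then testing the equations.

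Third, apply Lemma~\ref{lem:color} to get a local proper coloring of the conflict graph with $C=(16(\Lambda+1))^{16}$ colors, and keep only monochromatic original edges. An $s$-clique would have all its pairs conflicting, so it cannot be monochromatic; hence $\omega<s$. For independence, note that an independent set splits into $C$ color classes, each independent in the original restricted graph. Lemma~\ref{lem:rank} over $\F_{2^{bh}}$ bounds each class by $d^{bh}$. In the two-stage case Lemma~\ref{lem:words} bounds it by $d^k\rho^{bh}$, and restriction preserves this. Local access costs polynomial in $h$ by combining Lemmas~\ref{lem:decoder} and~\ref{lem:color} with fixed-size algebra arithmetic.

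The main obstacle is the first step: checking that the degree bound of Proposition~\ref{prop:substituted-degree} transfers to the reduced label-level set $X$ that Lemma~\ref{lem:finite-fibers} needs. This requires closure after the distinctness open restriction, projection, and, in the two-stage case, the split-factor distinctness. I would handle it by working with the union of components of the full zero set meeting the open set, then projecting linearly.
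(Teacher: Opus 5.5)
Your proposal is correct and follows the paper's own proof. The steps match: take the closure $X$ of the distinct-label clique incidence in coefficient--label space (distinct-$X$ in the two-stage case), obtain $\dim X\le L+m$ and $\cdeg X\le D$, apply Lemma~\ref{lem:finite-fibers}, specialize and decode $S$, color the conflict graph by Lemma~\ref{lem:color}, and keep only monochromatic edges.

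One small misattribution is harmless. Proposition~\ref{prop:substituted-degree} works in the substituted model, whose variables are already just $(A,z_1,\ldots,z_s)$, so it bounds $\cdeg X$ directly and no projection step is needed. A bound in a full chart space is not what it gives: the graph of the substitution there need not have cumulative degree at most $D$.
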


\begin{proof}
Let $X$ be the closure of the constructible clique incidence in coefficient--label space, without inverse witnesses. For the two-stage family take the distinct-$X$ open restriction used in Proposition~\ref{prop:two-stage}; its closure contains every actual clique when $E_q$ is a field. The interpolation dimension bound gives $\dim X\le L+m$, and Proposition~\ref{prop:substituted-degree} gives $\cdeg X\le D$. Apply Lemma~\ref{lem:finite-fibers}, specialize its polynomial, decode $S$, and color the conflict graph by Lemma~\ref{lem:color}. The preceding edge rule excludes all $s$-cliques and has the stated independence multiplier. It uses only local polynomial computations, root finding, membership and the color oracle.
\end{proof}

\begin{corollary}\label{cor:growing-range}
There are deterministic witnessing graphs yielding
\[
R(s,t)\ge t^{(1/2-o(1))s/\log_2s}
\]
when $s\to\infty$ and $s^2(\log_2s)^2=o(\log_2t)$. For fixed $s$, a sufficient availability threshold in this construction has size $2^{O(s^2(\log s)^2)}$. The preprocessing and local-access constants still depend on $s$.
\end{corollary}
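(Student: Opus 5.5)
The plan is to apply Theorem~\ref{thm:quantified} with $d=s$ in the single-stage critical family. We then extract along a residue class of $h$ as in Lemma~\ref{lem:extraction}, tracking how every threshold depends on $s$. With $d=s$ we have $\kappa_{s,s}=s(s-1)-\binom s2=\binom s2$, so $\mu_{s,s}=s/2$. In lowest terms this gives $m/b=s/2$, so $b\le2$ and $m\le s$. We restrict to odd $h$ when $b=2$, so that $\gcd(b,h)=1$. The endpoint condition \eqref{eq:endpoint-condition} then holds, and the exponent is $m/(b\log_2 s)=s/(2\log_2 s)$ with no rounding loss. This already gives $(1/2-o(1))s/\log_2s$ without optimizing over $d$.

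Next we quantify the constants. Proposition~\ref{prop:substituted-degree} gives $e=s$, hence $D=(2s)^{b\binom s2}\le(2s)^{s^2}$. This yields $\log_2D=O(s^2\log s)$. The constants $\Delta=sm(s-1)D$ and $\Lambda=s(s-1)D$ then satisfy $\log_2\Delta,\log_2\Lambda=O(s^2\log s)$. The color palette $C=(16(\Lambda+1))^{16}$ gives $\log_2C=O(s^2\log s)$.

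Theorem~\ref{thm:quantified} needs only $2^h>2\Delta$, so $h\ge h_1=O(s^2\log s)$ suffices. Below that, the graphs have $N\ge q^{m-1}(q-\Delta)\ge\tfrac12 2^{mh}$ vertices and $\alpha\le C s^{bh}$. Given $t$, choose the largest admissible $h$ with $Cs^{bh}\le t-1$. Its existence requires $t-1\ge C s^{b(h_1+2)}$. This is a threshold with $\log_2t\ge O(s^2\log s)+O(s^2\log s)\cdot\log s=O(s^2(\log s)^2)$, because the $h_1$ term is multiplied by $b\log_2 s$.

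Finally we compute the exponent loss. Since $h\ge(\log_2(t/C))/(b\log_2s)-2$, we get
\[
\log_2N\ge \frac{m}{b\log_2 s}\bigl(\log_2t-\log_2C\bigr)-2m-1 .
\]
We need $\log_2C+O(s\log s)=o(\log_2 t)$. Here $\log_2C=O(s^2\log s)$, which is $o(\log t)$ under the hypothesis $s^2(\log s)^2=o(\log t)$. The hypothesis is in fact stronger than needed for this step; it is what makes the threshold condition of the previous step hold. Then $N\ge t^{(1-o(1))s/(2\log_2s)}$, and the graph is $K_s$-free with $\alpha<t$, giving the corollary.

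The main obstacle is bookkeeping of the $s$-dependence. The key point is that the availability threshold is governed by $h_1\cdot b\log_2 s$, not just by $\log C$. One must also check that the additive loss $2m+1$ and the factor $\tfrac12$ are negligible, which holds since $m=O(s)$. Deterministic preprocessing is inherited from Theorem~\ref{thm:quantified}, with the caveat that its cost depends on $s$.
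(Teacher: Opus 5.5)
Your proposal is correct and follows the paper's proof essentially step for step: the single-stage family with $d=s$ and $\mu=s/2$ in lowest terms ($b\le 2$), $e=s$, the bounds $\log_2 D,\log_2\Delta,\log_2 C=O(s^2\log s)$, extraction along the admissible residue class of $h$, and the observation that the $2^{O(s^2(\log s)^2)}$ threshold comes from multiplying the initial degree $h_1=O(s^2\log s)$ by $b\log_2 s$. The remaining issues are cosmetic and do not affect the conclusion: you write $t$ where the gap inequality should have $t-1$, you say ``requires'' where you mean ``is guaranteed by'', and your $O(s\log s)$ loss term overestimates what is actually an $O(\log s)$ loss.
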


\begin{proof}
Use the single-stage family with $d=s$ and $\mu=s/2$. In lowest terms this has $b=1,m=s/2$ for even $s$, and $b=2,m=s$ for odd $s$. Take $e=s$. Then $\log_2D$, $\log_2\Delta$ and $\log_2C$ are $O(s^2\log s)$, and an admissible initial degree is $O(s^2\log s)$. Choose the largest $h\equiv1\pmod b$ with $Cd^{bh}\le t-1$. Once it is in the available range, the gap and vertex-density bounds give
\[
\log_2|V|\ge\frac{s}{2\log_2s}\bigl(\log_2(t-1)-\log_2C\bigr)-mb-1.
\]
The initial-degree requirement is met for $t\ge2^{O(s^2(\log s)^2)}$. Under the displayed little-oh hypothesis the losses are negligible relative to $(s/(2\log_2s))\log_2t$. This proves the assertion without asserting polynomial cost uniformly in $s$.
\end{proof}

\section{Geometric conventions and the complete-flag count}

\subsection{Dimension conventions}
All geometric dimensions below are over $\overline{\F}_2$. An affine algebraic set is a common zero set of finitely many polynomials. These are the closed sets of the Zariski topology. Inequalities such as $f\ne0$ are open conditions. A locally closed set is a closed set restricted by an open condition; a constructible set is a finite union of locally closed sets. Incidence and exact-rank conditions are of this form, using equations and nonvanishing of minors.

For an irreducible affine set, its dimension is the transcendence degree of its coordinate field. For a reducible set, dimension is the maximum over its irreducible components. For a constructible set, dimension means that of its Zariski closure. Dimensions add under products, finite unions take the maximum dimension, and the closure of the image of a polynomial map cannot have larger dimension than its source. Parameter counts are thus valid when supported by finite algebraic chart covers. We use constructibility of images and the fiber-dimension theorem in their standard forms \cite[Chapter I]{Hartshorne}.

An image need not be closed: projecting $xy=1$ to the $x$-coordinate gives $\Bbbk\setminus\{0\}$, whose closure is $\A^1$. Nevertheless, an image of dimension less than the target dimension has proper closure. A generic choice means a choice in a specified nonempty Zariski open set; it does not mean a random choice.

\begin{lemma}\label{lem:complete-flag-bound}
The ordered configuration space satisfies
\[
\dim\Ccal_{s,d}\le\binom d2+s(d-2).
\]
\end{lemma}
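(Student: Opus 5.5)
We bound the dimension by a sequential parameter count over a complete flag attached to the prefix spans, and we do not stratify by rank path. The point is that the bound $\binom d2+s(d-2)$ is exactly the $u=d-1$ value of \eqref{eq:rank-stratum}. It is also at least every stratum value $s(d-2)+u(2d-u-1)/2$ for $u\le d-1$, since the sum $\sum_{v=1}^u(d-v)$ increases with $u$ up to $u=d-1$.

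The most direct route reuses the stratification from the proof of Theorem~\ref{thm:exact-dimension}. The prefix spans $R_j=\spanop(P_1,\ldots,P_j)$ lie in $H_j$, so $\dim R_j\le d-1$. Hence the final rank $u$ satisfies $u\le\min(s,d-1)$. The configuration space is a finite union of the locally closed rank-path strata, and there are finitely many rank paths. Each stratum is a constructible set defined by incidence equations and minor conditions.

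Next we count one stratum, only as an upper bound. At a rank-preserving step the point lies in $\PP(R_{j-1})$, contributing at most $v-1$. At a rank-increasing step it lies in $\PP^{d-1}$, contributing at most $d-1$. The hyperplane $H_j$ contains $R_j$, contributing $d-1-\rho_j$. A rank-preserving step at rank $v$ therefore contributes $d-2$, and an increasing step to rank $v$ contributes $(d-2)+(d-v)$. Summing over the path gives \eqref{eq:rank-stratum}, and this is at most the $u=d-1$ value. For the upper bound we only need the fiber-dimension inequality. On each finite chart cover, where prefix bases are chosen by nonzero minors, the fibers of the map forgetting the last flag have dimension at most the stated count. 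Dimensions are then subadditive along the tower of projections. Taking the maximum over strata gives the lemma.

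We expect the main, though mild, obstacle to be justifying the sequential count rigorously for constructible sets. The fibers over the truncated configuration are linear subspaces of projective spaces whose dimensions depend only on the rank path. We would invoke the fiber-dimension theorem \cite[Chapter I]{Hartshorne} stratum by stratum, as in the conventions of this appendix. Alternatively, we could simply cite the equality already proved in Theorem~\ref{thm:exact-dimension} and observe that $u_*\le d-1$ with monotonicity in $u$. That gives the lemma immediately, although the appendix presumably wants a self-contained complete-flag argument.
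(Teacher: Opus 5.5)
Your argument is correct, but it does not follow the route your first sentence announces. You say you will count over a complete flag attached to the prefix spans and will not stratify by rank path, and then you carry out exactly that stratification.

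What you actually prove is the upper-bound half of Theorem~\ref{thm:exact-dimension}. You use the per-step fiber dimensions $d-2$ and $(d-2)+(d-v)$, summed along the rank path. You then observe that $\sum_{v=1}^u(d-v)$ is nondecreasing for $u\le d-1$ and equals $\binom d2$ at $u=d-1$. Your shortcut of citing Theorem~\ref{thm:exact-dimension} with $u_*\le d-1$ is equally valid, since that theorem's proof does not use this lemma.

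The paper's proof is the complete-flag argument you only allude to. It extends the chain of distinct prefix spans to a complete flag $F_1\subset\cdots\subset F_{d-1}$ and fixes indices $u_j$ with $R_j=F_{u_j}$. It then covers the configuration space by images of the closed incidence sets $\{P_j\subseteq F_{u_j}\subseteq H_j\}$. Each of these is a $\prod_j\PP^{u_j-1}\times\PP^{d-u_j-1}$-bundle over the flag variety. Every pair contributes exactly $d-2$ whatever $u_j$ is, and the flag contributes $\binom d2$ once. So the paper needs no distinction between rank-preserving and rank-increasing steps, no locally closed exact-rank strata, and no monotonicity check.

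Your route yields more: stratum-by-stratum bounds which, with the lower-bound construction, give the exact dimension. They also show that the complete-flag bound is sharp precisely when $u_*=d-1$, i.e.\ $d\le s+1$. The paper's route is coarser, but it is uniform and independent of Section~\ref{sec:geometry}. That is why it serves as the elementary count in the appendix.
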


\begin{proof}
Put $R_j=\spanop(P_1,\ldots,P_j)$. Ordered incidence and self-incidence imply $R_j\subseteq H_j$, so every $R_j$ is nonzero and proper. The distinct subspaces in the nested sequence of prefix spans extend to a complete flag
\[
F_1\subset F_2\subset\cdots\subset F_{d-1},\qquad\dim F_u=u.
\]
Complete flags have dimension $(d-1)+(d-2)+\cdots+1=\binom d2$, by successive projective-space bundles. Fix one and indices $u_j$ such that $R_j=F_{u_j}$. Allow independently all pairs with $P_j\subseteq F_{u_j}\subseteq H_j$. Points in $F_{u_j}$ and hyperplanes containing it form
\[
\PP^{u_j-1}\times\PP^{d-u_j-1},
\]
of dimension $d-2$. Allowing all $s$ choices and the complete flag gives dimension at most $\binom d2+s(d-2)$. Bases adapted to the flag supply local product charts. Finitely many index sequences cover the configurations, and images do not increase dimension.
\end{proof}


\end{document}